\DeclareMathOperator{\sech}{sech}
\theoremstyle{thmstyleone} 
\newtheorem{theorem}{Theorem} 
\newtheorem{proposition}[theorem]{Proposition} 
\newtheorem{lemma}{Lemma} 
\theoremstyle{thmstyletwo}
\newtheorem{assumption}{Assumption}[section]
\theoremstyle{thmstylethree} 
\newtheorem{definition}{Definition} 
\newtheorem{cor}{Corollary}[section]
\newtheorem{prob}{Problem}[section]
\newcommand{\thickhline}{ 
    \noalign {\ifnum 0=`}\fi \hrule height 1.5pt
    \futurelet \reserved@a \@xhline
}
\newcolumntype{"}{@{\hskip\tabcolsep\vrule width 1pt\hskip\tabcolsep}}
\begin{document}

\title[Nonmonotone Trust-Region Methods for Set Optimization]{Nonmonotone Trust-Region Methods for Optimization of Set-Valued Mapping of Finite Cardinality}

\author[1]{\fnm{Suprova} \sur{Ghosh}}\email{suprovaghosh.rs.mat19@itbhu.ac.in}

\author*[1]{\fnm{Debdas} \sur{Ghosh}}\email{debdas.mat@iitbhu.ac.in}

\author[2,3,4]{\fnm{Zai-Yun} \sur{Peng}}\email{pengzaiyun@126.com}

\author[5]{\fnm{Xian-Jun} \sur{Long}}\email{xianjunlong@ctbu.edu.cn}

\affil[1]{\orgdiv{Department of Mathematical Sciences}, \orgname{Indian Institute of Technology (BHU)}, \orgaddress{\city{Varanasi}, \postcode{221005}, \state{Uttar Pradesh}, \country{India}}}

\affil[2]{School of Mathematics, Yunnan Normal University, Kunming, 650092, People’s Republic of China}

\affil[3]{Yunnan Key Laboratory of Modern Analytical Mathematics and Applications, Yunnan Normal University, Kunming, 650092, People’s Republic of China}

\affil[4]{\orgdiv{School of Mathematics and Statistics}, \orgname{Chongqing Jiaotong University}, \orgaddress{\city{Chongqing}, \postcode{400074}, \country{China}}}

\affil[5]{\orgdiv{School of Mathematics and Statistics}, \orgname{Chongqing Technology and Business University}, \orgaddress{\city{Chongqing}, \postcode{400067}, \country{China}}}

\abstract{
\emph{Non-monotone trust-region methods} are known to provide additional benefits for scalar and multi-objective optimization, such as enhancing the probability of convergence and improving the speed of convergence. For optimization of set-valued maps, non-monotone trust-region methods have not yet been explored and investigated to see if they show similar benefits. Thus, in this article, we propose two non-monotone trust-region schemes--max-type and average-type for set-valued optimization. Using these methods, the aim is to find \emph{K}-critical points for a non-convex unconstrained set optimization problem through vectorization and oriented-distance scalarization. The main modification in the existing trust region method for set optimization occurs in reduction ratios, where max-type uses the maximum over function values from the last few iterations, and avg-type uses an exponentially weighted moving average of successive previous function values till the current iteration. Under appropriate assumptions, we show the global convergence of the proposed methods. To verify their effectiveness, we numerically compare their performance with the existing trust region method, steepest descent method, and conjugate gradient method using performance profile in terms of three metrics: number of non-convergence, number of iterations, and computation time.}

\keywords{Set optimization, Trust-region method, Non-monotone strategy, Reduction ratio} 

\pacs[2020 MSC Classification]{49J53, 49M15, 65K15, 90C30,90C33,90C53}

\maketitle

\section{Introduction}\label{sec1}
Set optimization is a generalization of optimization problems where the objective functions are set-valued maps. They are of great interest for modeling various decision-making processes, especially those that involve ambiguity, uncertainty, and multi-valued outcomes. In recent times, set optimization has gained significant attention due to its applicability in game theory \cite{chinchuluun}, duality theory \cite{Hamel}, mathematical economics \cite{Klein}, engineering, portfolio management in finance  \cite{mastrogiacomo2021set}, etc. For example, in \cite{mastrogiacomo2021set}, a set-value-based approach of optimizing a portfolio is introduced, where instead of scalar risk, a set-valued risk measure is used to reduce the risk of high losses. Formally, a \emph{set optimization problem} (SOP) is presented as follows:
\begin{equation*} 
 \text{minimize }F(x)~\text{subject to}~x\in U,
\end{equation*}
where the set-valued map $F:U \rightrightarrows V$ assigns elements of vector space $U$ to the subsets of a vector space $V$. In this paper, we consider unconstrained SOPs with an objective function $F:\mathbb R^{n} \rightrightarrows \mathbb R^{m}$ of the form 
\begin{align}\label{particl_set} 
F(x) = \left\{ f^{1}(x), f^{2}(x), \ldots, f^{p}(x) \right\}, ~x \in \mathbb R^{n},
\end{align}
where $f^{1}, f^{2}, \ldots, f^{p}: \mathbb R^{n}\to \mathbb R^{m} $ are twice continuously differentiable vector-valued functions. 

Mainly, there are two ways of solving SOPs through numerical techniques: the set approach and the vector approach.  
Based on these two approaches, many efficient algorithms have been developed over the last decade, such as the derivative-free method \cite{Jahnderivativefree,Jahn2018}, the steepest descent method \cite{steepmethset}, sorting-based method \cite{Jahn,Jahn2006,aGunther2019}, branch-bound method \cite{eichfelder2020algorithmic}, and complete-lattice approach \cite{lohne2025solution}. 
However, for the type of SOP considered in this paper, these methods suffer from some form of limitations. For example, derivative-free methods \cite{Jahnderivativefree,Jahn2018} do not make use of any information about the derivative, gradient, or Hessian. The steepest descent method \cite{steepmethset} does not theoretically guarantee global convergence and could be slow to converge. Sorting-based \cite{Jahn,Jahn2006,aGunther2019,bGunther2019} and branch-bound methods \cite{eichfelder2020algorithmic} are not well suited in the problem under consideration because, while sorting-based methods require the feasible set to be finite, branch-bound methods assume the feasible set lies within a box defined by convex constraints and convex uncertainty sets.  The lattice approach \cite{lohne2025solution} is not suitable for problems involving many variables. The conjugate gradient method \cite{kumar2024nonlinear} is classified as a first-order convergence method. The Newton method \cite{ghosh2024newton} achieves only superlinear or quadratic convergence. The quasi-Newton method \cite{ghosh2025quasi} establishes global superlinear convergence under the assumption of uniform continuity of the Hessian approximation.

Given these limitations, Ghosh et al. \cite{trust2025setopt} opted for
trust-region-based scheme and recently applied it to set optimization. However, their method has a monotone property and requires the function to decrease at every successful step \cite{carrizo2016trust,baraldi2023proximal,sun2023trust,ouyang2024trust}. This requirement is very strict and, as observed in the case of single-objective \cite{mo2005nonmonotone,ahookhosh2012efficient} and multi-objective optimization \cite{ramirez2022nonmonotone,ghalavand2023two}, such restrictions may limit the speed of convergence. This has been especially observed for the case when a monotone trust-region scheme is forced to go down along the bottom of a steep and tight curved valley, for example, for the Rosenbrock function \cite{deng1993nonmonotonic, sun2004nonmonotone}. In such scenarios, non-monotone schemes relax the strict monotonicity requirement and allow for the increment of function values in some iterations, leading to improved speed and the possibility of convergence.

\subsection{Related work}
In conventional optimization, there have been many studies employing non-monotone modifications to the trust-region method. Drawing motivation from Grippo's non-monotone scheme \cite{grippo1986nonmonotone} for line-search-based methods, Deng et al. \cite{deng1993nonmonotonic} were among the first to apply non-monotonicity to trust-region methods for single-objective optimization. Since then many other works have developed and used nonmonotone approaches for trust region methods, which are detailed in \cite{trust2025setopt}, for examples, Toint \cite{toint1997non}, Sun \cite{sun2004nonmonotone}, Ulbrich et al. \cite{ulbrich2003non},  Mo et al. \cite{mo2005nonmonotone}, Ahookhosh et al. \cite{ahookhosh2012efficient}, Chen et al. \cite{chen2013non}, Maciel et al. \cite{maciel2013monotone} etc. \emph{Non-monotone trust region methods} (NTRMs) are primarily of two types: Max-type and Average-type, where Max-type uses maximum over previous function values \cite{sun2004nonmonotone, mo2005nonmonotone, ahookhosh2012efficient,maciel2013monotone} and Avg-NTRM uses the average of successive previous function values \cite{mo2007nonmonotone} to determine the acceptance of a step.

In the case of multi-objective optimization as well, non-monotone methods have been applied successfully. For example, Ramirez et al. \cite{ramirez2022nonmonotone} modified the monotone trust-region scheme in \cite{carrizo2016trust} to apply non-monotonic update rules and investigated the reduction in the number of subproblems to be solved, the number of inefficient iterations, and the computation time. Ding et al. \cite{ding2023nonmonotone} further built upon \cite{ramirez2022nonmonotone} to introduce an adaptive NTRM to solve a particular class of multi-objective non-linear bi-level optimization problem by taking the convex combination of the current iteration's function value and the maximum over the function values from previous iterations. Although this max-based adaptive method was previously developed for bilevel problems, Ghalavand et al. \cite{ghalavand2023two} extended it to any general multi-objective problems and additionally studied the average-type variant of the adaptive trust-region method.

\subsection{Motivation}
From the aforementioned literature, non-monotone schemes are known to be efficient, particularly for high-dimensional and ill-conditioned problems. By relaxing the strict monotonicity requirement, these schemes have been shown to converge faster with a higher probability of finding a solution \cite{toint1997non}. Because of these advantages, we expect similar improvements when applied to SOPs. However, because of the challenges and difficulties in identifying numerical techniques for set optimization, NTRMs for this domain have not yet been developed. This gap in the research encourages us to develop and study non-monotone trust-region methods for SOP.

\subsection{Contributions}
In this study, we build upon the monotone TRM for SOP proposed in Ghosh et al. \cite{trust2025setopt} to introduce Max-NTRM and Avg-NTRM for SOP and study their convergence properties and numerical performances. The main modification is in defining the reduction ratio, where Max-NTRM considers the maximum over function values from the last few iterations, and Avg-NTRM takes a weighted moving average of function values up to the current iteration. For both methods, we provide theoretical global convergence guarantees.

To compare the performance of Max-NTRM and Avg-NTRM against the existing monotone TRM, Steepest-descent, and conjugate-gradient methods, we present numerical results on $22$ test SOPs. We compare the number of convergences, the number of iterations to converge, and the computation time. Further, we present the performance profile \cite{dolan2002benchmarking} of the five methods for these metrics. Based on the numerical results, we observe that, similar to the cases of single-objective and multi-objective optimization, the proposed non-monotone methods offer significant performance improvement over TRM in terms of number of iterations, computation time, and number of convergences. Especially, in terms of probability of convergence, NTRM performs the best.

Although the proposed NTRMs are motivated by the works of Ramirez and Sottosanto \cite{ramirez2022nonmonotone}, Sun \cite{sun2004nonmonotone}, Ghalavand et al. \cite{ghalavand2023two}, and Mo et al. \cite{mo2007nonmonotone}, they are distinct and novel in the following manner. 
\begin{itemize}
\item Ramirez and Sottosanto \cite{ramirez2022nonmonotone} proposed NTRM schemes for multi-objective optimization. The proposed method extends these schemes to SOPs, which are more general than vector or multi-objective problems, by modifying the monotone TRM scheme in \cite{trust2025setopt}. Unlike \cite{ramirez2022nonmonotone}, where a fixed vector problem is solved, the proposed approach handles evolving vector subproblems at each iteration, determined by the $K$-minimal element of the partition set and oriented-distance scalarization before applying TRM. Moreover, the max-type formulation (\ref{Ratios_NTR_Max}) generalizes that of \cite{trust2025setopt}, reducing to \cite{ramirez2022nonmonotone} when $p=1, K=\mathbb{R}^n_{+}$, while accommodating the general multi-function, multicomponent SOP structure. 
\item Sun \cite{sun2004nonmonotone} introduced one of the first single-objective max-type NTRM schemes. The proposed method broadens this to the SOP context, computing reduction ratios componentwise using the general ordering cone $K$ and $K$-minimal elements of the partition set, and adapting it to vector-valued components in set-valued maps by tracking past values across components rather than just a single maximum.

\item In relation to Ghalavand et al. \cite{ghalavand2023two}, who proposed adaptive max-type and average-type NTRMs for general multi-objective problems, the proposed work presents an Avg-NTRM for SOPs (\ref{NTR_Average_Ratios})–(\ref{tbddv_uiet}) as a direct generalization of Ghalavand’s average-based adaptive ratio but adapted to the $K$-minimal element of the partition set. 
Additionally, it combines exponential weighting with oriented-distance scalarization in the step acceptance criterion, which is not present in Ghalavand’s method.

\item Finally, compared to Mo et al. \cite{mo2007nonmonotone}, who studied an average-type NTRM for single-objective optimization using an exponentially weighted average of past function values, the present method generalizes Mo’s weighting scheme to the SOP context by combining it with the set ordering relation $\preceq_{K}$ and the oriented distance $\Delta_{-K}$. It further extends the reduction ratio computation to handle multi-component outputs and retains the step acceptance rule for moving toward a $K$-critical point as in \cite{trust2025setopt}; for $p =1, r = 1$, it reduces exactly to Mo et al.’s method.  
\end{itemize}

\subsection{Organization}
This paper is structured as follows: Section \ref{section1} presents the notations used in the paper and recalls some prerequisite definitions, including the notion of $K$-critical point and descent direction for set-valued maps. In Section \ref{Section3}, we describe our proposed two non-monotone schemes in detail and provide their respective pseudo-codes in Algorithm \ref{alg_max} and Algorithm \ref{alg_avg}, respectively. We also state the well-definedness of each step of these algorithms. Section \ref{global_conv_ana} is dedicated to proving the global convergence property of the proposed methods. Section \ref{Numerical_Experiment} contains results from numerical experiments to show the comparison of TRM, Max-NTRM, Avg-NTRM, SD, and CG with respect to different performance metrics. Finally, Section \ref{sect6} provides concluding remarks of the paper.

\section{Preliminaries and Terminologies}
\label{section1} 
\noindent
 The notations $\mathbb R^{m}_{+}$ and $\mathbb R^{m}_{++}$ denote the non-negative and positive hyper-octant of $\mathbb R^{m}$, respectively. $\mathscr{P}(\mathbb R^{m})$ is 
the class of all nontrivial subsets of $\mathbb R^{m}$.  For an $\mathcal{A} \in \mathscr{P}(\mathbb R^{m})$, its boundary and interior are denoted by bd(${\mathcal A}$) and int(${\mathcal A}$), respectively. Throughout the paper, any vector $v$ in $\mathbb R^{m}_{+}$ is a column vector, and its transpose is presented by $v^\top$, which is a row vector. $\lVert \cdot \rVert$ denotes a norm in $\mathbb R^{n}$ or in $\mathbb R^{m}$ or the standard matrix norm.  For a finite set $\mathcal A$, $\lvert \mathcal{A} \rvert$ denotes its cardinality. We denote $[k] := \{1, 2, \ldots, k\}$ for any $k \in \mathbb N$.

A nonempty  $K \in  \mathscr{P}(\mathbb R^{m})$ is a cone in $\mathbb{R}^m$ if for any $y \in K$, we have $ty\in K $ for all $t \ge 0$. A cone $K$ is pointed if $ K \cap (-K) = \{0\}$, convex  if $K+K = K$, and solid if $\text{int}(K) \neq \emptyset$. A convex and pointed cone $K$ in $\mathbb{R}^m$ defines a partial ordering on $\mathbb R^{m}$ as $y \preceq_{K}z \Longleftrightarrow z -y \in K$.
 A solid cone $K$ defines a strict ordering on $\mathbb R^{m}$ as $y \prec_{K} z \Longleftrightarrow z -y \in \text{int}(K)$.
Across the entire paper, we use $K$ to denote a convex, pointed, closed, and solid cone in $\mathbb{R}^m$. 
\begin{definition} 
\cite{khran}
Let $ {\mathcal A} \in \mathscr P(\mathbb R^{m})$ and $y_{0}\in {\mathcal A}$.  
\begin{enumerate}[(i)]
    \item  The element $y_{0}$ is called a $K$-minimal element of ${\mathcal A}$ with respect to $K$ if $(y_{0}-K)\cap {{\mathcal{A}}}= \{ y_{0}\}$. The collection of all ${K}$-minimal elements of $\mathcal A$ is denoted by $\text{Min}(\mathcal{A}, K )$.
    \item The element $y_{0}$ is called a weakly ${K}$-minimal element of ${\mathcal A}$ with respect to $K$ if $(y_{0}-\text{int}{(K)})\cap {{\mathcal{A}}}= \emptyset$. The collection of all weakly ${K}$-minimal elements of $\mathcal{A}$ is denoted by $\text{WMin}(\mathcal{A}, K)$.
\end{enumerate}
\end{definition}

To compare a pair of sets, we use the lower set less relation $\preceq^{l}_K$ on $\mathscr P(\mathbb R^{m})$ with respect to a given $K$. For any ${\mathcal A}, {\mathcal B} \in \mathscr P(\mathbb R^{m})$, the relation $\preceq^{l}_K$ on $P(\mathbb R^{m})$ is defined by ${\mathcal A} \preceq^{l}_{K} {\mathcal B} \Longleftrightarrow {\mathcal B}\subseteq {\mathcal A} +  K,$ and the strict lower set less relation $\prec^{l}_K$ on $\mathscr P(\mathbb R^{m})$ is given by ${\mathcal A} \prec^{l}_{K} {\mathcal B} \Longleftrightarrow
    {\mathcal B}\subseteq {\mathcal A} +  \text{int}(K)$.\\

\noindent
In this study, we consider solving the 
 unconstrained set optimization problem 
\begin{equation}\label{fgcx}
(\preceq^{l}_{K}) ~\text{---}\min_{x\in \mathbb R^{n}}  F(x),
\tag{\text{$\mathcal{SOP}^{l}_K$}}
\end{equation}
where $F :=\{f^i\}_{i \in [p]}$ and 
the $j$-th component of $f^i: \mathbb R^{n}\to \mathbb R^{m}$ is denoted by $f^{i, j}$, i.e., 
\[f^{i}(x) := \left(f^{i, 1}(x), f^{i, 2}(x), \ldots, f^{i, m}(x)\right)^\top, ~ x \in \mathbb{R}^n,\] and
the solution concept with respect to $\preceq^{l}_{K}$ is as follows.  
\begin{definition} A point $\bar x \in \mathbb{R}^{n}$ is called a (resp., weakly) $\preceq^{l}_{K}$-minimal solution of  \eqref{fgcx} if there does not exist any $x \in \mathbb{R}^{n}$ such that (resp., $F(x) \prec^{l}_{K} F(\bar x)$) $F(x) \preceq^{l}_{K} F(\bar x)$.
    A point $\bar x\in \mathbb R^{n}$ is called a \emph{local} weakly $\preceq^{l}_{K}$-minimal point of \eqref{fgcx} if there exists $\delta > 0$ such that for none of $x$ with $\lVert x -\bar{x} \rVert < \delta$, we have $F(x) \prec^{l}_K F(\bar x)$.
\end{definition}

Next, we recall a few auxiliary set-valued maps, which will help to find weakly $K$-minimal points of (\text{$\mathcal{SOP}^{l}_K$}) by solving a family of vector optimization problems. 
 \begin{definition}\cite{steepmethset} \label{active_index} 
\begin{enumerate}[(i)]
\item The function $I: \mathbb R^{n} \rightrightarrows [p] $,  defined by 
\begin{align*}
  I(x) := \{ i \in [p]: f^{i}( x) \in \text{Min}({F}(x),  K)\}, ~x \in \mathbb R^{n},  \end{align*}
 is called the set of active indices of $K$-minimal elements of the set $ F( x)$; 
\item For a given  $ v \in \mathbb R^{m}$, the function $I_{v}: \mathbb R^{n} \rightrightarrows [p]$ is defined by
\begin{align*}
    I_{v}(x) := \{ i \in I(x):  f^{i}(x)= v\}.
\end{align*}
is called the set of active indices of weakly $K$-minimal elements of the set $ F( x)$
\item The map $ \omega : \mathbb R^{n} \to \mathbb R$ is defined by 
$\omega (x) := \lvert \text{Min}(F(x),  K)\rvert, ~x \in \mathbb R^{n}.$
\end{enumerate}
\end{definition}

\begin{definition}\cite{steepmethset} For a given $\bar x \in \mathbb R^{n}$, consider an enumeration $\{v^{\bar x}_{1}, v^{\bar x}_{2}, \ldots, v^{\bar x}_{w(\bar x)} \}$ of $ \text{WMin}(F(\bar x),  K) $.
  The \emph{partition  set} at $\bar x$ is defined by 
  \begin{align*}
      {P}_{\bar x} := I_{v^{\bar x}_{1}}(\bar x) \times I_{v^{\bar x}_{2}}(\bar x) \times \cdots \times I_{v^{\bar x}_{w(\bar x)}}(\bar x).
  \end{align*}
\end{definition}

Throughout the paper, for a given $\bar x \in \mathbb{R}^n$, we denote $\bar \omega := \omega(\bar x)$, and a generic element of the partition set $P_{\bar x}$ is denoted by $\bar a := (\bar a_1, \bar a_2, \ldots, \bar a_{\bar \omega})$. 
\begin{definition}\cite{steepmethset} A point $\bar x\in \mathbb R^{n}$ is said to be a \emph{regular point} of the set-valued map $F: \mathbb{R}^n \rightrightarrows \mathbb{R}^n$ if 
\begin{enumerate}[(i)]
    \item $\text{Min}({F}(\bar x), K) = \text{WMin} 
 ({F}(\bar x), K)$, and 
    \item the cardinality function $\omega$ as given in Definition \ref{active_index} is constant  in a neighbourhood of $\bar{x}$.
\end{enumerate}
\end{definition}
\begin{lemma}\textup{\cite{steepmethset}}\label{regularity_condition} If $\bar x$ is a regular point of $F$, then there exists a neighborhood $\mathcal{N}$ of $\bar x$ such that 
\[\omega(x) = \omega(\bar x) \text{ and } P_x \subseteq P_{\bar x} \text{ for all } x \in \mathcal{N}. \]
\end{lemma}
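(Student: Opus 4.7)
The plan is to derive $\omega(x)=\omega(\bar x)$ directly from the regularity condition (ii), and then obtain the inclusion $P_x\subseteq P_{\bar x}$ by exhibiting an explicit bijection between the enumerations of $\text{WMin}(F(x),K)$ and $\text{WMin}(F(\bar x),K)$ on a small enough neighborhood of $\bar x$. First I enumerate $\text{WMin}(F(\bar x),K)=\{v_1^{\bar x},\ldots,v_{\bar\omega}^{\bar x}\}$, which by (i) coincides with $\text{Min}(F(\bar x),K)$, fix $\varepsilon>0$ smaller than half the minimum pairwise distance among the $v_j^{\bar x}$, and use continuity of each $f^{i}$ to shrink the neighborhood from (ii) to a neighborhood $\mathcal{N}$ on which $\lVert f^{i}(x)-f^{i}(\bar x)\rVert<\varepsilon$ for every $i\in[p]$.

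On $\mathcal{N}$ I would establish two geometric facts. First, $I(x)\subseteq I(\bar x)$: if $i\notin I(\bar x)$ then by (i) there exists $j\in[p]$ with $f^{i}(\bar x)-f^{j}(\bar x)\in\text{int}(K)$; openness of $\text{int}(K)$ combined with continuity of the $f^{\ell}$ forces $f^{i}(x)-f^{j}(x)\in\text{int}(K)$ on $\mathcal{N}$, so $i\notin I(x)$. Second, cross-cluster domination is ruled out: for $i\in I_{v_j^{\bar x}}(\bar x)$ and $k\notin I_{v_j^{\bar x}}(\bar x)$, the $K$-minimality of $v_j^{\bar x}$ in $F(\bar x)$ forces $v_j^{\bar x}-f^{k}(\bar x)\notin K$, and since $K$ is closed its complement is open, so continuity yields $f^{i}(x)-f^{k}(x)\notin K$ on $\mathcal{N}$. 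Hence at $x$ the element $f^{i}(x)$ can be dominated only by $f^{k}(x)$ with $k$ in the same cluster $I_{v_j^{\bar x}}(\bar x)$; because this cluster is finite and $K$ is pointed, the restriction of $\preceq_{K}$ at $x$ admits a $\preceq_{K}$-minimal element, which therefore lies in $I(x)\cap I_{v_j^{\bar x}}(\bar x)$, proving this intersection is non-empty for every $j\in[\bar\omega]$.

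The conclusion follows by counting. The $\varepsilon$-separation shows that on $I(x)$ the $x$-partition $\{I_{v_k^x}(x)\}_k$ refines the cluster partition $\{I_{v_j^{\bar x}}(\bar x)\cap I(x)\}_j$, and the previous step guarantees this cluster partition has exactly $\bar\omega$ non-empty cells. Since (ii) forces the $x$-partition to have exactly $\omega(x)=\bar\omega$ cells, the refinement must be trivial: each $I_{v_k^x}(x)$ coincides with some $I_{v_j^{\bar x}}(\bar x)\cap I(x)$. Reindexing so that $v_k^x$ is the common value of $f^{i}(x)$ for $i\in I_{v_k^{\bar x}}(\bar x)\cap I(x)$, I obtain $I_{v_k^x}(x)\subseteq I_{v_k^{\bar x}}(\bar x)$ for every $k\in[\bar\omega]$, and taking Cartesian products yields $P_x\subseteq P_{\bar x}$. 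The main obstacle I anticipate is the non-emptiness of each cluster intersection $I_{v_j^{\bar x}}(\bar x)\cap I(x)$: ruling out that an entire cluster becomes non-minimal at $x$ is where both halves of regularity---the $\text{Min}/\text{WMin}$ equality and the constancy of $\omega$---are needed together with the finiteness of $[p]$ and the pointedness of $K$.
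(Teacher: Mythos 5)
The paper does not prove this lemma: it is imported verbatim from \cite{steepmethset} (Bouza--Quintana--Tammer), so there is no in-paper argument to compare yours against. Judged on its own terms, your proof is correct and essentially reconstructs the standard argument. All the key steps check out: condition (ii) gives $\omega(x)=\omega(\bar x)$ at once; the inclusion $I(x)\subseteq I(\bar x)$ correctly uses condition (i) to upgrade ``not minimal'' to ``strictly dominated,'' which is the open condition $f^{i}(\bar x)-f^{j}(\bar x)\in\operatorname{int}(K)$ that survives perturbation (note $0\notin\operatorname{int}(K)$ by pointedness, so the dominating value stays distinct); the exclusion of cross-cluster domination correctly uses $(v_j^{\bar x}-K)\cap F(\bar x)=\{v_j^{\bar x}\}$ together with closedness of $K$, and it applies to every $k\notin I_{v_j^{\bar x}}(\bar x)$, including $k\notin I(\bar x)$, which is exactly what makes the cluster-minimal element globally $K$-minimal; the finite-poset argument (antisymmetry from pointedness) then gives non-emptiness of each $I_{v_j^{\bar x}}(\bar x)\cap I(x)$; and the $\varepsilon$-separation plus the cell count $\omega(x)=\bar\omega$ forces the two partitions of $I(x)$ to coincide, yielding the factorwise inclusion and hence $P_x\subseteq P_{\bar x}$ after the natural reindexing of the enumeration. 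The only loose end is cosmetic: for a nearby $x$ that is not itself regular, $\operatorname{WMin}(F(x),K)$ may strictly contain $\operatorname{Min}(F(x),K)$, in which case the literal definition of $P_x$ acquires empty factors $I_v(x)$ and $P_x=\emptyset$, so the inclusion holds trivially; your counting argument implicitly restricts to the non-empty cells, which is the substantive case. You also need finitely many shrinkings of the neighborhood (one per index pair), which is fine since $[p]$ is finite.
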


\begin{definition} \textup{\cite{trust2025setopt}} \label{spcritic} 
A point $ \bar x \in \mathbb R^{n}$ is said to be a $K$-\emph{critical point} for \eqref{fgcx} if there does not exist any $\bar a \in P_{\bar x}$ and $\bar s \in \mathbb{R}^n$ such that 
$$ \nabla f^{\bar a_j}(\bar x)^{\top} \bar s \prec_K 0\in \mathbb R^{m} ~\text{for all}~ j \in [\omega(\bar x)], $$ 
where for any $i \in [p]$, the notation $\nabla f^{i}(\bar x)^{\top} s$ is given by $$\nabla f^{i}(\bar x)^{\top} s := 
\left(   
 \nabla f^{i,1}(\bar x)^{\top}s, 
 \nabla f^{i,2}(\bar x)^{\top}s,  
 \ldots, 
 \nabla f^{i,m}(\bar x)^{\top}s
\right)^\top, s \in \mathbb{R}^n. $$  
\end{definition}

\begin{definition}\textup{\cite{trust2025setopt}}\label{descent_setopt}
   An $s \in \mathbb R^{n}$ is said to be  a \emph{descent direction} of the set-valued map $F = \{ f^{i}\}_{i \in [p]}$ at $\bar x$ if there exists $t_{0} > 0$ such that 
   \begin{align*}
       \{f^{i}(\bar x + t s) \}_{i \in [p]} \prec^{l}_{K} \{ f^{i}(\bar x)\}_{i \in [p]} ~\text{for all}~ t \in (0, t_{0}].
   \end{align*}
\end{definition}

Next, consider the following \emph{vector optimization problem} associated with a partition element $a := (a_1, a_2, \dots, a_{\bar{\omega}}) \in P_{\bar{x}}$, where $\bar{x} \in \mathbb{R}^m$ and $\bar{\omega} := \omega(\bar{x})$. Define the cone
\[
\widetilde{K} := \prod_{j=1}^{\bar{\omega}} K \subseteq \mathbb{R}^{m\bar{\omega}},
\]
and let $\preceq_{\widetilde{ K}}$ and $\prec_{\widetilde{ K}}$ denote the partial and strict orders, respectively, on $\mathbb{R}^{m\bar{\omega}}$ induced by the cone $\widetilde{ K}$. Define the  function $\widetilde{f}^{a}: \mathbb{R}^n \to \prod_{j=1}^{\bar{\omega}} \mathbb{R}^m$ by
\begin{equation}
\widetilde{f}^{a}(x) := 
\begin{pmatrix}
f^{a_1}(x) \\
f^{a_2}(x) \\
\vdots \\
f^{a_{\bar{\omega}}}(x)
\end{pmatrix}.
\label{eq:fa}
\end{equation}
Then, the associated \emph{vector optimization problem} is given by
\begin{equation*}\label{vec_pr}
(\preceq_{\widetilde{ K}})\text{---}\min_{x \in \mathbb R^{n}}  {\widetilde{f}}^{a}(x),
\tag{\text{$\mathcal{VOP}_{a}$}}
  \end{equation*}

Next, we recall the properties of the oriented distance function $\Delta_{-K}$ in characterizing $K$-critical points and descent directions of the vector-valued map $\widetilde{f}^{a}$ in (\ref{vec_pr}). We choose to use the oriented-distance-based scalarization because the class of scalarization functions given by $\Delta_
{-K}$  is known to be more general (See Corollary 2, p.700 of \cite{bouza2019unified}).

\begin{definition}\cite{Ansari2018} Let $\mathcal{A}\in \mathscr{P}(\mathbb R^{m})$. A function $\Delta_{\mathcal{A}}: \mathbb R^{m} \to \mathbb R  \cup \{\pm \infty\}=  :\overline{\mathbb R}$ defined by  
\begin{align*}
    \Delta_{\mathcal A}(y): = d_{\mathcal A}(y) -d_{\mathcal A^{c}}(y), y \in \mathbb R^{m},
\end{align*}
is called the oriented distance function, where $d_{\mathcal A}(y):=\inf\limits_{a \in \mathcal{A}}\lVert y-a \rVert$. By convention, for any $y \in \mathbb{R}^m$, $d_{\emptyset}(y) = + \infty$, and hence, $\Delta_{\emptyset}(y)= + \infty$, whereas $\Delta_{\mathbb R^{m}}(y)= -\infty$.
\end{definition}

\begin{lemma} \textup{\cite{Ansari2018, zaffaroni}}\label{proori}
\begin{enumerate}[(i)]
     \item\label{aeruin} $\Delta_{-K}$ is Lipschitz continuous with Lipschitz constant $1$;
     \item\label{fyf} $\Delta_{-K}(y)< 0$ if and only if $ y \in \operatorname{int}(-K)$.
     \item\label{hryx} $\Delta_{-K}(y)=0$ if and only if $ y \in \operatorname{bd}(-K) $.
     \item\label{ufbmvfd} $\Delta_{-K}(y)> 0$ if and only if $y \in \operatorname{int}[(-K)^{c}] $.
     \item\label{ifdyte} $\Delta_{-K}(y+z)= \Delta_{-K-z}(y)$ for all $y, z \in \mathbb{R}^{m}$.
     \item \label{ubrsyu} $\Delta_{-K}(\lambda y)=\lambda \Delta_{\lambda^{-1}(-K)} (y)$ for all $ y \in \mathbb{R}^{m}$ and $ \lambda >0$.
     \item \label{tewyueui}  $ \Delta_{-K}(y+z)\le \Delta_{-K}(y)+\Delta_{-K}(z)$ and $\Delta_{-K}(y)-\Delta_{-K}(y)\le \Delta_{-K}(y-z)$ for all $y,z\in \mathbb R^{m}$.
     \item \label{htytuttd} Given $ y, z \in \mathbb R^{m}$, if  $y \prec z$ (resp., $y \preceq z$), then $\Delta_{-K}(y) < \Delta_{-K}(z)$ (resp., $\Delta_{-K}(y) \le \Delta_{-K}(z)$).
\end{enumerate}
\end{lemma}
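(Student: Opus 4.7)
\emph{Proof proposal.} The plan is to establish the eight properties in an order that respects their logical dependencies: first the ``static'' facts about sign and Lipschitzness (items (i)--(iv)) directly from the case split $\Delta_{-K}(y)=d_{-K}(y)$ on $(-K)^{c}$ and $\Delta_{-K}(y)=-d_{(-K)^{c}}(y)$ on $-K$; next the two algebraic identities ((v), (vi)) via a change of variable inside the defining infima; then the sublinearity package (vii), which is the real technical content; and finally the monotonicity statement (viii) as a direct corollary of (vii) together with (ii)--(iii).

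For (i), I would split according to whether $y_{1},y_{2}$ lie in $-K$ or in $(-K)^{c}$. Each of $d_{-K}$ and $d_{(-K)^{c}}$ is $1$-Lipschitz, so the result is immediate when both points lie on the same side. The subtle case is the opposite-side one: by continuity, pick a point $y_{t}$ of $\mathrm{bd}(-K)$ on the segment $[y_{1},y_{2}]$; since $-K$ is closed, $d_{-K}(y_{t})=0$, and since $y_{t}\in\overline{(-K)^{c}}$, also $d_{(-K)^{c}}(y_{t})=0$. Then $|\Delta_{-K}(y_{1})-\Delta_{-K}(y_{2})|\le\|y_{1}-y_{t}\|+\|y_{t}-y_{2}\|=\|y_{1}-y_{2}\|$. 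Items (ii)--(iv) are immediate sign computations once one notes that $\mathbb{R}^{m}$ partitions as $\mathrm{int}(-K)\cup\mathrm{bd}(-K)\cup\mathrm{int}[(-K)^{c}]$: for instance, $\Delta_{-K}(y)<0$ forces $d_{-K}(y)=0$ and $d_{(-K)^{c}}(y)>0$, which is exactly $y\in\mathrm{int}(-K)$, and the other two cases are analogous.

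For (v), the substitution $a'=a-z$ gives $d_{-K}(y+z)=d_{-K-z}(y)$; since $(-K)^{c}-z=(-K-z)^{c}$, the same substitution also gives $d_{(-K)^{c}}(y+z)=d_{(-K-z)^{c}}(y)$, and subtracting yields the claim. For (vi), the substitution $a'=\lambda^{-1}a$ with $\lambda>0$ factors out $\lambda$. The main step is (vii). Because $-K$ is a cone, $\lambda^{-1}(-K)=-K$, so (vi) degenerates to positive homogeneity $\Delta_{-K}(\lambda y)=\lambda\Delta_{-K}(y)$. What remains is convexity, and this is the hard part. My plan is to establish the support-function representation
\[
\Delta_{-K}(y)=\sup\bigl\{\langle\xi,y\rangle : \xi\in K^{*},\ \|\xi\|\le 1\bigr\},
\]
where $K^{*}=\{\xi:\langle\xi,k\rangle\ge 0\ \forall k\in K\}$ is the positive polar cone. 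On $(-K)^{c}$ one identifies $\Delta_{-K}(y)=d_{-K}(y)$ with the optimal value of a projection problem whose Lagrange dual is a sup of linear functionals over $K^{*}\cap B^{*}$; on $-K$ one runs the analogous argument for $-d_{(-K)^{c}}(y)$ and checks the formulas agree on $\mathrm{bd}(-K)$. A supremum of linear functionals is sublinear, which is exactly the first inequality of (vii); the second inequality is then obtained by writing $\Delta_{-K}(y)=\Delta_{-K}((y-z)+z)\le\Delta_{-K}(y-z)+\Delta_{-K}(z)$ and rearranging.

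Finally, (viii) follows in one line: if $y\prec_{K}z$ then $y-z\in-\mathrm{int}(K)=\mathrm{int}(-K)$, so (ii) gives $\Delta_{-K}(y-z)<0$, and the second half of (vii) yields $\Delta_{-K}(y)<\Delta_{-K}(z)$; the non-strict version is identical, using (iii) when $y-z\in\mathrm{bd}(-K)$. The main obstacle throughout is clearly (vii), and within it the support-function representation that delivers convexity; everything else is either a direct unpacking of the signed-distance formula or a routine change of variable.
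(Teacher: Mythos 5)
The paper does not prove this lemma at all: it is quoted verbatim from the cited sources (Ansari--K\"obis--Sharma and Zaffaroni), so there is no in-paper argument to compare yours against. Your outline is essentially the standard proof from that literature --- case-split on $-K$ versus $(-K)^{c}$ for the sign and Lipschitz facts, change of variables for the translation and scaling identities, convexity plus positive homogeneity for sublinearity, and (viii) as a corollary of (ii), (iii), and (vii). The dependency order is right, the segment argument for the cross-boundary Lipschitz case is correct, and your derivation of (viii) silently (and correctly) repairs the typo in the statement of (vii), which should read $\Delta_{-K}(y)-\Delta_{-K}(z)\le \Delta_{-K}(y-z)$.

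One genuine error: the support-function representation you plan to use for (vii) is wrong as stated. With the constraint $\lVert\xi\rVert\le 1$ the set of admissible $\xi$ contains $\xi=0$, so the supremum is always nonnegative, whereas $\Delta_{-K}$ is strictly negative on $\operatorname{int}(-K)$ by your own item (ii); as written, the right-hand side equals $\max\{0,\Delta_{-K}(y)\}$. The correct identity (due to Hiriart-Urruty, and the one your Lagrange-dual derivation would actually produce) restricts to the unit \emph{sphere} of the polar cone, $\Delta_{-K}(y)=\sup\{\langle\xi,y\rangle:\xi\in K^{*},\ \lVert\xi\rVert=1\}$, provided $K^{*}\neq\{0\}$ (guaranteed here since $K$ is pointed and convex). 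This does not damage the downstream argument --- a supremum of linear functionals over any index set is still subadditive and positively homogeneous --- but the identity itself must be corrected or the claimed equality with $\Delta_{-K}$ fails. Alternatively, you could bypass the representation entirely by invoking Zaffaroni's theorem that $\Delta_{A}$ is convex whenever $A$ is convex, and combine it with the positive homogeneity you already extracted from (vi).
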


With the help of $\Delta_{-K}$ and its properties, the concept of $K$-critical point for (\ref{vec_pr}) is given below.

\begin{definition}\textup{\cite{trust2025setopt}}\label{vecrit} A point $ \bar x \in \mathbb R^{n}$ is said to be a $\tilde{K}$-\emph{critical point} for (\ref{vec_pr}) with respect to $K$ if for any $s \in \mathbb R^{n}$ there exists $ j_{0}\in \{1, 2,3, \ldots, \bar{\omega} \}$  such that $\Delta_{-K}(\nabla f^{a_{j_{0}}}(\bar x)^{\top}s) \geq 0$.
\end{definition}

\begin{lemma}\textup{\cite{trust2025setopt}}\label{rtyrrsv}
Let $\bar{x} \in \mathbb{R}^m$, $\bar{\omega} = \omega(\bar{x})$, and  $\widetilde{K}  \in \mathscr{P}(\mathbb R^{m\bar{\omega}})$ be the cone $\Pi^{\bar{\omega}}_{j=1}{K}.$ Suppose $\preceq_{\widetilde{ K}}$ and $\prec_{\widetilde{ K}}$ be the partial order and the strict order, respectively, in $\mathbb R^{m\bar{\omega}}$ induced by $\widetilde{ K}$. Furthermore, consider the partition set $P_{\bar{x}}$ associated to $\bar{x}$, and define the function ${\widetilde{f}}^{a}$ as in \eqref{eq:fa} for each $a \in P_{\bar{x}}$. Then, $\bar{x}$ is a ${K}$-critical point of \eqref{fgcx} if and only if for every $a \in {P}_{\bar x}$, $\bar x$ is a $\tilde{K}$-critical point of \eqref{vec_pr}.
\end{lemma}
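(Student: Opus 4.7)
The plan is to prove the equivalence by contraposition, since both conditions are naturally stated as ``no descent direction exists,'' and negating them puts their common structure on full display. Unfolding definitions, $\bar{x}$ fails to be a ${K}$-critical point of \eqref{fgcx} precisely when there exist $\bar{a}\in {P}_{\bar{x}}$ and $\bar{s}\in\mathbb{R}^n$ with
\[
\nabla f^{\bar a_j}(\bar x)^{\top}\bar{s}\prec_{K} 0\in\mathbb{R}^m\quad\text{for all } j\in[\bar\omega],
\]
and $\bar{x}$ fails to be a $\widetilde{K}$-critical point of \eqref{vec_pr} for some $a\in P_{\bar x}$ precisely when there exists $s\in\mathbb{R}^n$ with
\[
\Delta_{-K}\!\bigl(\nabla f^{a_{j}}(\bar x)^{\top}s\bigr)< 0 \quad\text{for all } j\in[\bar\omega].
\]

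First I would show the forward direction (via contraposition). Assuming $\bar x$ is not $\widetilde{K}$-critical for \eqref{vec_pr} for some $a\in P_{\bar x}$, I would extract the corresponding $s$ and apply Lemma \ref{proori}\eqref{fyf} componentwise to conclude $\nabla f^{a_j}(\bar x)^{\top}s\in\operatorname{int}(-K)$, i.e.\ $\nabla f^{a_j}(\bar x)^{\top}s\prec_K 0$, for every $j\in[\bar\omega]$. Taking $\bar a:=a$ and $\bar s:=s$ then exhibits a witness that $\bar x$ is not $K$-critical for \eqref{fgcx}, establishing the contrapositive.

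For the reverse direction I would again argue by contraposition: if $\bar x$ is not $K$-critical for \eqref{fgcx}, pick the witnesses $\bar a\in P_{\bar x}$ and $\bar s\in\mathbb{R}^n$ guaranteed by Definition \ref{spcritic}. Applying the ``only if'' part of Lemma \ref{proori}\eqref{fyf} to each $\nabla f^{\bar a_j}(\bar x)^{\top}\bar s\in\operatorname{int}(-K)$ gives $\Delta_{-K}(\nabla f^{\bar a_j}(\bar x)^{\top}\bar s)<0$ for every $j\in[\bar\omega]$. Setting $a:=\bar a$ and $s:=\bar s$, this says there is no index $j_0$ for which $\Delta_{-K}(\nabla f^{a_{j_0}}(\bar x)^{\top}s)\ge 0$, so $\bar x$ is not a $\widetilde{K}$-critical point of \eqref{vec_pr} for this particular $a\in P_{\bar x}$.

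There is not much in the way of a serious obstacle here, since the proof is essentially a bookkeeping translation between ``$y\prec_K 0$'' and ``$\Delta_{-K}(y)<0$'' via Lemma \ref{proori}\eqref{fyf}, followed by careful quantifier juggling. The only point requiring slight care is to be explicit that the ``for every $a\in P_{\bar x}$'' quantifier on the $\widetilde{K}$-criticality side correctly matches the ``there exists $\bar a\in P_{\bar x}$'' quantifier on the $K$-criticality side under negation; handling this cleanly is exactly what makes the contrapositive formulation more convenient than a direct proof.
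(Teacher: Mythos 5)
Your proof is correct: the statement reduces, after negating both sides, to the pointwise equivalence $y \prec_K 0 \Leftrightarrow y \in \operatorname{int}(-K) \Leftrightarrow \Delta_{-K}(y) < 0$ from Lemma \ref{proori}(\ref{fyf}), and your quantifier bookkeeping (matching the existential over $a \in P_{\bar x}$ on both negated sides) is handled properly. The paper itself imports this lemma from \cite{trust2025setopt} without reproducing a proof, so there is nothing to compare against, but your argument is the natural definitional one and is sound.
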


\begin{definition} \cite{Fil_ste_2000} A vector $s \in \mathbb R^{n}$  is said to be a $\tilde{K}$-descent direction of the  objective function ${\widetilde{f}}^{a}: \mathbb R^{n} \to \mathbb R^{m \bar{\omega}}$ of  (\ref{vec_pr}) at $ \bar x$ if 
 there exists $t_{0} >0$ such that 
 \begin{align*}
     {\widetilde{f}}^{a}(\bar x+ ts) \prec_{\widetilde{ K}} {\widetilde{f}}^{a}(\bar x) ~\text{for all}~ t \in (0, t_{0}],
 \end{align*}
 i.e., if there exists $t_{0} >0$ such that for all $ j \in  [\bar{\omega}]$, 
 \begin{align*}
     f^{a_{j}}(\bar x+ ts) \prec_{ {K}} 
      f^{a_{j}}(\bar x) ~\text{for all}~ t \in (0, t_{0}],
 \end{align*}
 i.e., if $ \nabla f^{a_{j}}(\bar x)^{\top} s \in -\text{int}(K)$ for all $ j \in  [\bar{\omega}]$.
\end{definition}
The following result shows that a local weakly $\preceq_{K}$-minimal point of \eqref{fgcx} is a $K$-critical point, and under a convexity assumption on the objective function, $K$-critical points and weakly $\preceq_{K}$-minimal points are identical.   
\begin{theorem} \label{nec_cri}
\begin{enumerate}[(i)]
\item \label{uitu_yrc} If $\bar x$ is a local weakly $\preceq^{l}_{K}$-minimal point of \eqref{fgcx}, then $\bar x$ is a $K$-critical point for \eqref{fgcx}. 
\item If each $f^i$, $i \in [p]$, is $K$-convex and $\bar x$ is a $K$-critical point for \eqref{fgcx}, then $\bar x$ is a weakly $\preceq^{l}_{K}$-minimal point of \eqref{fgcx}.
\end{enumerate}
\end{theorem}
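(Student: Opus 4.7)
The plan is to prove both directions by contraposition, exploiting a first-order Taylor expansion for the necessary condition in (i) and the gradient form of $K$-convexity for the converse in (ii).

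For (i), suppose $\bar x$ is not a $K$-critical point. By Definition \ref{spcritic} there exist $\bar a \in P_{\bar x}$ and $s \in \mathbb{R}^n$ with $\nabla f^{\bar a_j}(\bar x)^\top s \in -\text{int}(K)$ for every $j \in [\bar\omega]$. A first-order Taylor expansion of each $f^{\bar a_j}$ at $\bar x$, combined with the fact that $-\text{int}(K)$ is an open cone, produces a $t_0 > 0$ (the minimum of the finitely many per-$j$ thresholds) such that $f^{\bar a_j}(\bar x + ts) \prec_K f^{\bar a_j}(\bar x) = v_j^{\bar x}$ for all $j \in [\bar\omega]$ and all $t \in (0, t_0]$. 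Next I would use that $F(\bar x)$ is finite and $K$ is pointed and closed to argue that every $f^i(\bar x) \in F(\bar x)$ is $\preceq_K$-dominated by some $K$-minimal element, which in turn belongs to $\text{WMin}(F(\bar x), K) = \{v_1^{\bar x}, \ldots, v_{\bar\omega}^{\bar x}\}$. Chaining the two inequalities yields, for each $i \in [p]$, an index $j_i \in [\bar\omega]$ with $f^{\bar a_{j_i}}(\bar x + ts) \prec_K v_{j_i}^{\bar x} \preceq_K f^i(\bar x)$, so $f^i(\bar x) \in F(\bar x + ts) + \text{int}(K)$ and hence $F(\bar x + ts) \prec^l_K F(\bar x)$ for arbitrarily small $t > 0$, contradicting local weak $\preceq^l_K$-minimality.

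For (ii), I would again argue contrapositively. Assume some $x \in \mathbb{R}^n$ satisfies $F(\bar x) \subseteq F(x) + \text{int}(K)$ and put $s := x - \bar x$. For each weakly $K$-minimal value $v_k^{\bar x}$ I would select an index $\bar a_k \in I_{v_k^{\bar x}}(\bar x)$ for which $f^{\bar a_k}(x) \prec_K v_k^{\bar x}$. The gradient inequality for $K$-convex functions then gives
\[ \nabla f^{\bar a_k}(\bar x)^\top s \preceq_K f^{\bar a_k}(x) - f^{\bar a_k}(\bar x) \in -\text{int}(K), \]
and the absorption property $-\text{int}(K) - K \subseteq -\text{int}(K)$ promotes this to $\nabla f^{\bar a_k}(\bar x)^\top s \in -\text{int}(K)$ for every $k \in [\bar\omega]$. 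Assembling the coordinates into $\bar a = (\bar a_1, \ldots, \bar a_{\bar\omega}) \in P_{\bar x}$ then contradicts Definition \ref{spcritic} of $K$-criticality.

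The main obstacle is the index selection in (ii): the inclusion $F(\bar x) \subseteq F(x) + \text{int}(K)$ only delivers, for each $v_k^{\bar x}$, some $j_k \in [p]$ with $f^{j_k}(x) \prec_K v_k^{\bar x}$, and a priori $f^{j_k}(\bar x)$ need not equal $v_k^{\bar x}$, whereas the partition structure demands $\bar a_k \in I_{v_k^{\bar x}}(\bar x)$. My approach to circumvent this is to propagate $K$-convexity along the entire segment $[\bar x, x]$ and to invoke Lemma \ref{rtyrrsv}, which identifies $K$-criticality at $\bar x$ for \eqref{fgcx} with $\tilde K$-criticality at $\bar x$ for each $(\mathcal{VOP}_a)$; the classical equivalence between $\tilde K$-criticality and weak $\tilde K$-minimality for convex vector-valued objectives then allows the chosen dominating indices to be realigned with the required partition component, thereby closing the contradiction.
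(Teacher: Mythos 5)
Your part~(i) is correct and is in fact a little more self-contained than the paper's: where the paper transfers local weak $\preceq^{l}_{K}$-minimality of \eqref{fgcx} to local weak minimality of $(\mathcal{VOP}_{\bar a})$ by citing Lemma~3.1 of \cite{steepmethset} and then derives a contradiction with the Taylor expansion, you run the Taylor/open-cone argument first and then close the loop with the external domination property of the finite set $F(\bar x)$ (every $f^{i}(\bar x)$ is $\preceq_{K}$-dominated by a $K$-minimal element, which equals $f^{\bar a_{j_i}}(\bar x)$ for the corresponding partition component). That chain $f^{\bar a_{j_i}}(\bar x+ts)\prec_K v^{\bar x}_{j_i}\preceq_K f^{i}(\bar x)$ legitimately yields $F(\bar x+ts)\prec^{l}_{K}F(\bar x)$ for all small $t$, so this half stands on its own.

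Part~(ii) is where the gap is, and you have diagnosed it yourself but not repaired it. The contrapositive route founders exactly where you say: from $F(\bar x)\subseteq F(x)+\operatorname{int}(K)$ you only get, for each $v^{\bar x}_{k}$, some $j_k\in[p]$ with $f^{j_k}(x)\prec_K v^{\bar x}_{k}$, and the $K$-convexity gradient inequality for $f^{j_k}$ controls $f^{j_k}(x)-f^{j_k}(\bar x)$, not $f^{j_k}(x)-v^{\bar x}_{k}$; since $j_k$ need not lie in $I_{v^{\bar x}_{k}}(\bar x)$, you cannot assemble a tuple in $P_{\bar x}$ that violates Definition~\ref{spcritic}. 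Your proposed fix --- Lemma~\ref{rtyrrsv} plus the classical ``critical $\Rightarrow$ weakly minimal'' result for $K$-convex vector objectives --- is the right first two-thirds of the paper's own (direct, not contrapositive) proof: criticality of $\bar x$ gives, for every $\bar a\in P_{\bar x}$ and every $x$, an index $j_0$ with $\Delta_{-K}(\nabla f^{\bar a_{j_0}}(\bar x)^{\top}(x-\bar x))\ge 0$, and convexity promotes this to $f^{\bar a_{j_0}}(x)\not\prec_K f^{\bar a_{j_0}}(\bar x)$, i.e.\ weak $\tilde K$-minimality of $\bar x$ for every $(\mathcal{VOP}_{\bar a})$. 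But the remaining step --- passing from weak minimality of \emph{all} the vector subproblems back to weak $\preceq^{l}_{K}$-minimality of \eqref{fgcx} --- is precisely the nontrivial ``realignment of indices'' you wave at; it is the content of Lemma~3.1 in \cite{steepmethset}, which the paper invokes explicitly and which you neither cite nor prove. As written, your argument does not close: you need to either quote that vectorization lemma or supply its proof.
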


\begin{proof}
\begin{enumerate}[(i)]
\item On contrary, let $\bar x$ be not a $K$-critical point of (\ref{fgcx}). By Lemma \ref{rtyrrsv}, there exists $\bar{a} := (\bar{a}_{1}, \bar{a}_{2}, \ldots, \bar{a}_{\bar{\omega}}) \in 
P_{\bar x}$ such that $\bar{x}$ is not a $\tilde{K}$-critical point of ($\mathcal{VOP}_{\bar a}$). Therefore, there exists $\bar s \in \mathbb R^{n}$ such that 
\begin{equation}\label{aux_dg_17_01} 
\nabla f^{\bar a_{j}}(\bar x)^{\top} \bar s \in -\text{int}(K)~ \text{for all}~ j \in [ \bar{\omega}].
\end{equation} 
 
Since $f^{\bar{a}_{j}} $ is continuously differentiable for all  $j \in [\bar{\omega}]$, we have 
\begin{align}\label{tylor_series}
    f^{\bar a_{j}}(x) = f^{\bar a_{j}}(\bar x) + \nabla f^{\bar a_{j}}(\bar x)^{\top}(x-\bar x)+ o(\lVert x-\bar x\rVert),
\end{align}
  where  $\lim\limits_{x \to \bar x} \frac{o(\lVert x -\bar x\rVert)}{\lVert x-\bar{x}\rVert}=0.$ Since $\bar x$ is a local weakly $\preceq^{l}_{K}$-minimal element of \eqref{fgcx}, there  exists a neighbourhood $\mathcal{N}(\bar x, \delta)$ of $\bar x$ such that 
  \[ \nexists ~x \in \mathcal{N}(\bar x, \delta) \text{ with } \{{{f}^{i}}(x)\}_{i\in [p]}\prec^{l}_{K}\{{f}^{i}(\bar x)\}_{i \in [p]}.\] 
Thus, by Lemma 3.1 in \cite{steepmethset}, $\bar x$ is a local weakly $\tilde{K}$-minimal point of (\ref{vec_pr}) with $a = \bar a$. This implies that there does not exist $ x \in \mathcal{N}(\bar x, \delta)$ with ${{f}^{\bar a_j}}(x) \prec_{{ K}}{{f}^{\bar a_{j}}}(\bar x)$ for all $j \in [\bar {\omega}]$, i.e., $  {{f}^{\bar a_j}}(x) - {{f}^{\bar a_{j}}}(\bar x) \notin -\text{int}(K)$ for any $x \in \mathcal{N}(\bar x, \delta)$ and for all $j \in [\bar{\omega} ]$. From (\ref{tylor_series}), there exists $\bar{\delta} < \delta $  such that for all $j \in [\bar \omega]$, 
\begin{align}\label{tyu}
& \nabla f^{\bar a_{j}}(\bar x)^{\top}(x-\bar x) \notin -\text{int} (K) \text{ for all } x \in \mathcal{N}(\bar x, \bar{\delta}). 
\end{align}
Take any $x'\in \mathcal{N}(\bar{x}, \bar{\delta})$ such that $ x'-\bar x = \bar s$. Then, from (\ref{tyu}), it implies that $\nabla f^{\bar a_{j}}(\bar x)^{\top} \bar s \not\in - \text{int}(K)$ for all $j \in [\bar{\omega}]$. This is contradictory to \eqref{aux_dg_17_01}. Therefore, $\bar x$ is a $K$-critical point for \eqref{fgcx}. \\

\item By the given hypothesis, it follows that for any $\bar a \in P_{\bar x}$, for all $j \in [\bar{\omega}], x \in \mathbb R^{n} $ and $ \mu \in (0,1]$,
\begin{align*}
&  \mu f^{\bar{a}_{j}}(x)+(1-\mu)f^{\bar{a}_{j}}(\bar x)- f^{\bar{a}_{j}}(\mu x+ (1-\mu)\bar x) \in { K}\\
\implies & (f^{\bar{a}_{j}}(x)-f^{\bar{a}_{j}}(\bar x))-\tfrac{1}{{\mu}} \left( {f^{\bar{a}_{j}}( \bar x+\mu(x-\bar x))-f^{\bar{a}_{j}}(\bar x) }\right) \in {K}.
\end{align*}
As $\mu \to 0^{+}$, we obtain for all $ j \in [\bar{\omega}]$ and $x \in \mathbb R^{n}$ that 
\begin{align} 
&  (f^{\bar{a}_{j}}(x)-f^{\bar{a}_{j}}(\bar x))- \nabla f^{\bar{a}_{j}}(\bar x)(x-\bar x)\in {K} \nonumber\\
\implies & \nabla f^{\bar{a}_{j}}(\bar x)(x-\bar x) \preceq_{K} (f^{\bar{a}_{j}}(x)-f^{\bar{a}_{j}}(\bar x)) \label{qwewrc} \\ 
\implies & \Delta_{- K}(\nabla f^{\bar{a}_{j}}(\bar x)(x-\bar x) ) \leq \Delta_{- K}( f^{\bar{a}_{j}}(x)-f^{\bar{a}_{j}}(\bar x),  \text{ by Lemma \ref{proori}(\ref{htytuttd})}.
\end{align}
Since $\bar x$ is a $K$-critical point for \eqref{fgcx}, it follows from Lemma \ref{rtyrrsv} that $\bar{x}$ is a $\tilde{K}$-critical for (\ref{vec_pr}). Hence, for any $s \in \mathbb R^{n}$, there exists $j_{0} \in [\bar {\omega}]$ such that $\Delta_{- K}(\nabla f^{\bar{a}_{j_{0}}}(\bar x)^{\top}s) \geq 0$. 
 In particular, taking $s = x-\bar x$, we have for any $x \in \mathbb R^{n}$ that 
 \begin{align*}
   & 0 \le \Delta_{- K}(\nabla f^{\bar{a}_{j_{0}}}(\bar x)( x- \bar x) ) \leq \Delta_{- K}( f^{\bar{a}_{j_{0}}}( x)-f^{\bar{a}_{j_{0}}}(\bar x)) \\
  \implies &  f^{\bar{a}_{j_{0}}}( x)-f^{\bar{a}_{j_{0}}}(\bar x) \notin - \text{int}{(K)}, ~\text{by Lemma \ref{proori}}~\\
 \implies & f^{\bar{a}_{j_{0}}}(x) \not\prec_{K}  f^{\bar{a}_{j_{0}}}(\bar x).
\end{align*}
Therefore, there exists $j_{0}\in [\bar{\omega}]$ for which there does not exist any $x \in \mathbb R^{n}$ such that
$f^{a_{j_{0}}}(x) \prec_{K}  f^{a_{j_{0}}}(\bar x)$. This implies $\bar x$ is a weakly $\tilde{K}$-minimal element of ($\mathcal{VOP}_{\bar a}$) for all $\bar a \in P_{\bar x}$. Hence, using Lemma 3.1 in \cite{steepmethset}, $\bar{x}$ is weakly $\preceq^{l}_{K}$-minimal element for \eqref{fgcx}. 
\end{enumerate}
\end{proof} 
 
 The following Theorem \ref{critopti}, restated from \cite{trust2025setopt}, shows how the function $\theta: \mathbb{R}^{n} \to \mathbb{R}$ and the map $s: \mathbb{R}^{n} \rightrightarrows \mathcal{B}(x)$ are used to characterize the $K$-critical points of our \eqref{fgcx}, where 
\begin{equation}\label{sdgftdu}
    \resizebox{.87\hsize}{!}{$\theta(x) := \min\limits_{ s \in  \mathcal B(x)}\max\limits_{j \in [ \omega(x)]}\left\{ \Delta_{-{K}}\left(\nabla f^{{a}_{j}}(x)^{\top}s+ \tfrac{1}{2}s^{\top}\nabla^{2}f^{{a}_{j}}(x)s\right) ,\Delta_{-{K}} \left( \nabla f^{{a}_{j}}(x)^{\top}s\right)\right\}$}, 
   \end{equation} 
   \begin{align}\label{hdhvsd} 
   \resizebox{.9\hsize}{!}{$s(x) := \underset{s\in \mathcal B(x)}{\operatorname{argmin}}\max\limits_{j \in [\omega(x)]}\bigg\{\Delta_{-{K}}\bigg(\nabla f^{{a}_{j}}(x)^{\top}s+ \tfrac{1}{2}s^{\top}\nabla^{2}f^{{a}_{j}}(x)s\bigg), \Delta_{-{K}} \bigg( \nabla f^{{a}_{j}}(x)^{\top}s\bigg)\bigg\},$}
\end{align}
and $\mathcal{B}(x) := \{s\in \mathbb{R}^n: \| s\| \le \Omega\}$ for some $\Omega>0$.
\begin{theorem}\textup{\cite{trust2025setopt}}\label{critopti}
For the functions $\theta$ and $s$ in (\ref{sdgftdu}) and (\ref{hdhvsd}), respectively, the following results hold. 
\begin{enumerate}[(a)]
\item The mapping $ \theta$ is continuous in $\mathcal{R}$, where $\mathcal{R}$ is the set of all regular point for \eqref{fgcx}, and well defined in $\mathbb R^{n}$.
\item\label{sucfr} 
The following three statements satisfy $(i) \Longleftrightarrow(ii) \Longrightarrow (iii)$: \begin{enumerate}[(i)]
    \item $\bar{x} \in \mathbb R^{n}$ is not a $K$-critical point for \eqref{fgcx};
    \item\label{gpourer} $\theta(\bar x)< 0$;
    \item $ s(\bar x) \neq {0}$.
\end{enumerate}
\end{enumerate}
\end{theorem}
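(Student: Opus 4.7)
The plan is to treat parts (a) and (b) separately, since the first rests on a compactness-plus-continuity argument while the second is a quantitative perturbation argument.

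For the well-definedness of $\theta$ and $s$ on all of $\mathbb{R}^n$, I would fix an arbitrary $x$ and observe that the inner maximand in (\ref{sdgftdu}) is a finite maximum of functions that are continuous in $s$: each $\Delta_{-K}$ is $1$-Lipschitz by Lemma \ref{proori}(\ref{aeruin}), its arguments are smooth in $s$ by the $C^{2}$ smoothness of the $f^{a_j}$, and $[\omega(x)]$ is finite. The feasible set $\mathcal{B}(x) = \{s : \|s\| \le \Omega\}$ is a fixed, nonempty, compact ball, so the Weierstrass extreme-value theorem delivers both the existence of the minimum value $\theta(x)$ and the nonemptiness of $s(x)$. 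For continuity of $\theta$ on the regular set $\mathcal{R}$, I would appeal to Lemma \ref{regularity_condition}: at any $\bar x \in \mathcal{R}$ there is a neighborhood on which $\omega(x) \equiv \omega(\bar x)$ and $P_x \subseteq P_{\bar x}$, so the index set and the pool of gradients and Hessians entering (\ref{sdgftdu}) are locally stable. The inner function is then jointly continuous in $(x,s)$ and the constraint multifunction $x \mapsto \mathcal{B}(x)$ is constant, so Berge's maximum theorem yields continuity of $\theta$ at $\bar x$.

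For part (b), the implication $(ii) \Longrightarrow (iii)$ is immediate: at $s = 0$ both arguments of $\Delta_{-K}$ in (\ref{sdgftdu}) vanish and $\Delta_{-K}(0) = 0$ by Lemma \ref{proori}(\ref{hryx}), so the value of the inner max at $s = 0$ is $0$; hence $\theta(\bar x) < 0$ forces $s(\bar x) \neq 0$. The direction $(ii) \Longrightarrow (i)$ is also quick: if $\theta(\bar x) < 0$, then at $s(\bar x)$ every $\Delta_{-K}(\nabla f^{a_j}(\bar x)^{\top} s(\bar x))$ is strictly negative, and Lemma \ref{proori}(\ref{fyf}) places each $\nabla f^{a_j}(\bar x)^{\top} s(\bar x)$ in $-\text{int}(K)$. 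This says $\bar x$ is not a $\widetilde{K}$-critical point of ($\mathcal{VOP}_{a}$), so Lemma \ref{rtyrrsv} rules out $K$-criticality of (\ref{fgcx}).

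The main work lies in $(i) \Longrightarrow (ii)$. From non-$K$-criticality, Lemma \ref{rtyrrsv} supplies an $a \in P_{\bar x}$ and a direction $\bar s \in \mathbb{R}^n$ with $\nabla f^{a_j}(\bar x)^{\top} \bar s \in -\text{int}(K)$ for every $j$; by Lemma \ref{proori}(\ref{fyf}) this gives $\alpha := \min_j \bigl(-\Delta_{-K}(\nabla f^{a_j}(\bar x)^{\top} \bar s)\bigr) > 0$. The challenge is that (\ref{sdgftdu}) demands that the same $s$ simultaneously negate both the linear and the augmented quadratic arguments of $\Delta_{-K}$ for every index $j$. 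I would try $s_t := t\bar s$ for small $t > 0$. Since $-K$ is a cone, Lemma \ref{proori}(\ref{ubrsyu}) collapses to $\Delta_{-K}(ty) = t\,\Delta_{-K}(y)$, so the linear piece is bounded by $-\alpha t < 0$. For the quadratic piece, the $1$-Lipschitz estimate in Lemma \ref{proori}(\ref{aeruin}) controls the Hessian perturbation by $\tfrac{t^2}{2}|\bar s^{\top} \nabla^2 f^{a_j}(\bar x)\bar s|$, yielding an upper bound of the form $-\alpha t + \beta t^2$, where $\beta := \tfrac{1}{2}\max_j |\bar s^{\top}\nabla^{2}f^{a_j}(\bar x)\bar s|$. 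Choosing $t$ small enough that this expression is negative and that $t\|\bar s\| \le \Omega$ places $s_t$ in $\mathcal{B}(\bar x)$ and proves $\theta(\bar x) < 0$. The delicate point is ensuring a single scalar $t$ works uniformly across all $j$, but finiteness of $[\omega(\bar x)]$ and strict positivity of $\alpha$ make this routine.
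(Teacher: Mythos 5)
Note first that the paper you are working from does not prove Theorem~\ref{critopti} at all: it is imported verbatim from \cite{trust2025setopt} and used as a black box, so there is no in-paper argument to compare yours against. Judged on its own terms, your part~(b) is sound and follows the standard route: the value $0$ attained at $s=0$ gives $(ii)\Rightarrow(iii)$; Lemma~\ref{proori}(\ref{fyf}) combined with Lemma~\ref{rtyrrsv} gives $(ii)\Rightarrow(i)$; and for $(i)\Rightarrow(ii)$ the positive homogeneity of $\Delta_{-K}$ on the cone $-K$ together with the $1$-Lipschitz bound on the quadratic perturbation yields the estimate $-\alpha t+\beta t^{2}<0$ for small $t$, uniformly over the finitely many indices $j$. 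One small point worth making explicit there is that $\theta(\bar x)$ is a minimum over pairs $(a,s)\in P_{\bar x}\times\mathcal{B}$ (see (\ref{rbdwbyew})), so you should state that the particular $a$ produced by Lemma~\ref{rtyrrsv} is an admissible choice in that minimum; with that said, the step is complete.

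The genuine gap is in the continuity claim of part~(a). Your appeal to Berge's maximum theorem correctly handles the inner minimization over $s$, since $\mathcal{B}$ is a fixed compact ball and the integrand is jointly continuous in $(x,s)$ for each fixed $a$; writing $\varphi_a(x):=\min_{s\in\mathcal{B}}\Theta_x(a,s)$, each $\varphi_a$ is continuous. But $\theta(x)=\min_{a\in P_x}\varphi_a(x)$ also minimizes over the \emph{varying} finite set $P_x$, and Lemma~\ref{regularity_condition} only gives the inclusion $P_x\subseteq P_{\bar x}$ near a regular point, not equality. That inclusion makes the feasible set shrink, which yields $\liminf_{x\to\bar x}\theta(x)\ge\min_{a\in P_{\bar x}}\varphi_a(x)\to\theta(\bar x)$, i.e.\ lower semicontinuity; it does \emph{not} by itself give upper semicontinuity, because the index $\bar a\in P_{\bar x}$ achieving $\theta(\bar x)$ may fail to belong to $P_x$ for $x$ arbitrarily close to $\bar x$, in which case $\theta(x)$ could jump upward. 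Your phrase that the ``index set and the pool of gradients and Hessians \ldots are locally stable'' is doing precisely the work that the cited lemma does not supply: the correspondence $x\mapsto P_x$ need not be lower hemicontinuous, which is the hypothesis Berge's theorem would require for the outer minimization. To close the argument you must either show that the minimizing partition element persists in $P_x$ for $x$ near $\bar x$, or otherwise bound $\limsup_{x\to\bar x}\theta(x)$ by $\theta(\bar x)$ directly; as written, that half of the continuity claim is unproven.
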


\section{Non-Monotone Trust-Region Method for Set Optimization}\label{Section3}

In this section, we build on the TRM algorithm proposed in \cite{trust2025setopt} for ($\mathcal{SOP}^{l}_{k}$) and propose Max-NTRM and Avg-NTRM by modifying the reduction ratio in a way
such that it allows for non-monotonic relaxation for the acceptance of a step.

TRM for \eqref{fgcx} primarily involves three steps: vectorization, scalarization, and trust-region-based iterative updates. The vectorization step converts the set optimization problem into a class of vector optimization problems. At the $k$-th iterative point $x_{k}$, the partition set $P_{x_{k}}$ (denoted briefly as $P_{k}$) of weakly minimal solutions of the set objective is generated, from which a particular $a^{k}$ is selected based on the necessary condition of $K$-critical point of ($\mathcal{SOP}^{l}_{k}$). 
Also, denote $P_{x}$ as the partition set at $x$ and $\mathcal{B} := \{ s \in \mathbb R^{n}: \lVert s \rVert \le \Omega_{\max}\}$
with $\Omega_{\max}$ as the maximum allowed trust-region step-size.
Using these definitions, we restate the function $\theta: \mathbb R^{n} \to \mathbb R$,  which characterizes criticality as follows:
\begin{align}\label{rbdwbyew}
     \theta(x) := \min_{(a, s) \in P_{x} \times \mathcal {B}} \Theta_{x}(a,s),
 \end{align}
where $\Theta_{x}: P_{x} \times \mathcal{B} \rightarrow \mathbb R$, for any given $ x \in \mathbb R^{n}$, is given by 
\begin{align}\label{namfun}
   \Theta_{x}(a,s) := \max_{j \in [\omega(x)]} \left\{ \Delta_{-K} \left(\nabla f^{a_{j}}(x)^{\top}s+\tfrac{1}{2}s^{\top}\nabla^{2} f^{a_{j}}(x)s\right), \Delta_{-K}(\nabla f^{a_{j}}(x)^{\top}s) \right\}. 
\end{align} 
 Theorem  \ref{critopti} states that if a point $x_{k}$ is a $K$-critical point of \eqref{fgcx}, then $\theta(x_{k}) = 0$. 
 An ${a}^{k} := \left({a}^{k}_1, {a}^{k}_2, \ldots,  {a}^{k}_{\omega_{k}}\right)$ is selected from the partition set $P_{k}$ by solving
\begin{equation}\label{Sg_aux_17_01_2}
    \resizebox{0.91\hsize}{!}{$
   ({a}^{k},  s_{k}) \in \underset{(a, s)\in P_{k}\times \mathcal B_{k}}{\operatorname{argmin}} \max_{j \in [\omega_k]} \left\{\Delta_{-K}\left(\nabla f^{a_j}(x_{k})^{\top}s + \tfrac{1}{2} s^{\top} \nabla^{2} f^{a_j}(x_k) s \right), \Delta_{-K}\left( \nabla f^{a_j}(x_{k})^{\top}s\right) \right\}$
   },
   \end{equation}
such that $\theta({x_k}) = \Theta_{x_{k}}( {a^{k}},{s_{k}})$, where $\mathcal{B}_k := \mathcal{B}(x_{k}): = \{s\in \mathbb{R}^n: \|s\| \le \Omega_k\}$ with $\Omega_{k}>0$. Using ${a}^{k}$, a vector optimization problem is defined as
\begin{equation*}\label{vop_at_x_k}
(\preceq_{\widetilde{K}}) \text{---} \min_{x \in \mathbb R^{n}}  {\widetilde{f}}^{a^k}(x),~ a^{k} \in P_{k}
\tag{\text{$\mathcal{VOP}_{a^k}(x_k)$}}. 
\end{equation*}
Then, a model function approximation $\widetilde{m}^{a^{k}}$ of $\widetilde{f}^{a^{k}}$ is defined on $\mathcal{B}_{k}$ as follows: 
$${\widetilde{m}}^{{a}^{k}}: = (m^{{a}^{k}_{1}}, m^{{a}^{k}_{2}}, \ldots, m^{{a}^{k}_{\omega_k}})^\top,$$
where 
$m^{{a}^{k}_{j}}(s):= \nabla f^{{a}^{k}_{j}}(x_{k})^{\top}s+ \tfrac{1}{2}s^{\top}\nabla^{2}f^{{a}^{k}_{j}}(x_{k})s,~ s \in \mathcal{B}_k,~ \text{ for each}~ j \in [\omega_k]$,  with
\begin{align*} 
 & \nabla f^{{a}^{k}_{j}}(x_{k})^{\top}s := \left( 
\nabla f^{{a}^{k}_{j},1}( x_{k})^{\top}s, 
\nabla f^{{a}^{k}_{j},2}( x_{k})^{\top}s,  
\ldots, 
\nabla f^{{a}^{k}_{j},m}( x_{k})^{\top}s \right)^\top \text{ and }  \\ 
& s^{\top}\nabla^{2} f^{{a}^{k}_{j}}(x_{k})s := \left(   s^{\top}\nabla^{2} f^{{a}^{k}_j,1}(x_{k})s, 
s^{\top}\nabla^{2} f^{{a}^{k}_j,2}(x_{k})s, 
 \ldots, 
 s^{\top} \nabla^{2} f^{{a}^{k}_j,m}(x_{k})s \right)^\top,
\end{align*}
to compute the step $s_{k} \in \mathcal{B}_{k}$, satisfying (\ref{Sg_aux_17_01_2}). Subsequently, a minimization problem with an oriented distance scalarization of $\widetilde{m}^{a^{k}}$ as the objective is reformulated as follows:
\begin{equation}\label{ghtyu}
\left.\begin{aligned} 
&\min  & & t\\
   & \text{subject to} & &\Delta_{-{K}}\left(\nabla f^{{a}^{k}_j}(x_{k})^{\top}s+ \tfrac{1}{2}s^{\top}\nabla^{2}f^{{a}^{k}_j}(x_{k})s\right)-t\le 0, ~j = 1,2, \ldots, {\omega}_{k},\\
 & & &\Delta_{-{K}}\left(\nabla f^{{a}^{k}_{j}}(x_{k})^{\top}s\right)-t \le 0, ~j =1, 2, \ldots, {\omega}_{k},\\
 & & & \lVert s \rVert \le \Omega_{k},
 \end{aligned}
 \right\}.
\end{equation}
Next, in \cite{trust2025setopt}, an appropriate reduction ratios $\rho^{a^{k}_{j}}_{k}$ for all $j \in [\omega_{k}]$ is defined as 
\begin{align}\label{Ratios_Trm}
~\rho^{{a}^{k}_{j}}_{k} := & \frac{\text{actual function reduction}}{\text{predicted reduction}} =-\frac{\Delta_{- K}(f^{{a}^{k}_{j}}(x_{k}+s_{k})-f^{{a}^{k}_{j}}(x_{k}))}{\Delta_{-K}
({m}^{{a}^{k}_{j}}(0)-{m}^{{{a}^{k}_{j}}}(s_{k}))}.
\end{align}
This is used in the step acceptance criterion and the update rule of the trust-region radius. 

 In this paper, we modify the reduction ratio so that it takes into account not just the function values from the current iteration but also from previous iterations. Based on the way such information is utilized, we propose Max-NTRM and Avg-NTRM in the following. 

\subsection{Max-NTRM for SOP}
Drawing motivation from the work of Ghosh et al. \cite{trust2025setopt}, at the current iterate $x_{k}$, we define the reduction ratios $\rho^{a^{k}_{j}}_{k}$ for Max-NTRM, for all $j \in [\omega_{k}]$, as
\begin{align}\label{Ratios_NTR_Max}
\rho^{a^{k}_{j}}_{k} :=-\frac{\Delta_{-K}\left(f^{a^{k}_{j}}(x_{k}+s_{k})-(f^{a^{k}_{j},{r}}(x_{l^{j,r}(k)}))_{r \in [m]}\right)}{\Delta_{-K}(m^{a^{k}_{j}}_{k}(0)-m^{a^{k}_{j}}_{k}(s_{k}))}, 
\end{align}
where for all $i \in [p]$ and $r\in [m]$,  
\begin{align}\label{ghvt_ugh_huh}
f^{{i,r}}(x_{l^{i,r}(k)}) :=
\begin{cases}
\max_{0 \le q \le N_k} f^{{i,r}}(x_{k-q}) \quad \quad &\text{if} ~ a^{k}= a^{k-1}= \cdots = a^{k-q} = \cdots = a^{k-N_k} \\
f^{{i,r}}(x_{k}) &\text{otherwise}.
\end{cases}
\end{align}
Here, $ N_{0} = 0, N_k = \min\left\{N_{k-1}+1, \hat{N}\right\}$, and $\hat{N}$ is a sufficiently large estimate of the largest value that $N_k$ can take, i.e., the maximum number of past iterations that are considered. For each index \(r \in [m]\), the term \( f^{i,r}(x_{l^{i,r}(k)}) \) denotes the maximum value of the \( r \)-th component of the function \( f^i \) evaluated at some past iterate \( x_{l^{i,r}(k)} \), where the index \( l^{i,r}(k) \) corresponds to the point at which this maximum is attained.
Since $a^{k}_{j}$ is one of the indices $i$, the vector $(f^{a^{k}_{j}, r}(x_{l^{j,r}(k)}))_{r \in [m]}$ corresponds to one of the maximum value evaluations $(f^{i, r}(x_{l^{j,r}(k)}))_{r \in [m]}$. For $\hat{N} = 0$,  note that $f^{{i}}(x_{l^{i}(k)}) = f^{{i}}(x_{k})$ and the reduction ratio of Max-NTRM reduces to (\ref{Ratios_Trm}). \\
Here, we note that (\ref{Ratios_NTR_Max}) reduces to formula (7) in \cite{ramirez2022nonmonotone} when we set $p = 1$, so that $F(x) = f^{1}(x)$ and $K = \mathbb{R}^{n}_{+}$, and to equation (2.17) in \cite{sun2004nonmonotone} when we set $p = 1$ and $r = 1$, so that $F(x) = f^{1,1}(x)$ and $K = \mathbb{R}_{+}$. Then, (\ref{Ratios_NTR_Max}) is a true generalized of the conventional max-type NTRM for multiobjective and single objective optimization. 

Note that in \eqref{ghvt_ugh_huh}, $a^{k}$ of the last $N_{k}$ iterations is required to be the same. This requirement arises because the indices of the objective functions that are in $a^{k}$'s may change in the past  $N_{k}$ iterations.  If any $a^{k}$ in the past $N_{k}$ iterations differs, attainment of the maximum value by the same objective function would be impossible. Therefore, when the set $a^{k}$ of the current and last $N_k$ iterations are the same, we use the maximum of their function values over the last $N_k$ iterations to calculate the set of reduction ratios $\rho^{a^{k}_{j}}_{k}$; otherwise, we simply take the current function value $f^{i,r}(x_{k})$ in the numerator, as in \cite{trust2025setopt}.

Based on the formulation of reduction ratios  $\rho^{a^{k}_{j}}_{k}$ for Max-NTRM, one can determine whether the solution $s_{k}$ of (\ref{ghtyu}) is an acceptable step for $F: \mathcal{B}_k \rightrightarrows \mathbb{R}^m$ or not. As shown below in Proposition \ref{descveri}, for Max-NTRM, $\rho^{a^{k}_{j}}_{k}>0$ implies that  $f^{{a}^{k}_{j}}(x_{k}+s_{k}) \prec_K (f^{a^{k}_{j},{r}}(x_{l^{j,r}(k)}))_{r \in [m]}$. In other words, for acceptance, $s_k$ does not need to be along the descent direction of $f^{{a}^{k}_{j}}$ at $x_k$, as in TRM. This allows a non-monotonic freedom to the function values so that they can increase for some iterations if necessary. Also, if the step $s_{k}$ of $\eqref{vop_at_x_k}$ is accepted, it must also be an accepted step of \eqref{fgcx}, as demonstrated in the next subsection (see Corollary \ref{sg_1_ag1}).

This maximum-based scheme, however, can be disadvantageous in some cases. For example, when objective values at the current iteration are very good, it would be more sensible to utilise them; however, Max-NTRM completely ignores them to instead consider the maximum objective values over past iterations \cite{ahookhosh2012efficient}. To address such an issue, average-based non-monotone schemes have generally been studied
that take a weighted average of the function values from previous iterations. We next propose such an average-based non-monotone scheme for \eqref{fgcx}.

\subsection{Avg-NTRM for SOP}
At a given iterate $x_{k}$, we define the reduction ratios $\rho^{a^{k}_{j}}_{k}$, for all $j \in [\omega_{k}]$, of Avg-NTRM for \eqref{fgcx} by 
\begin{align}\label{NTR_Average_Ratios}
\rho^{a^{k}_{j}}_{k} := \frac{-\Delta_{-K}(f^{a^{k}_{j}}(x_{k}+s_{k}) - (C^{a^{k}_{j},{r}}_{k})_{r \in [m]})}{\Delta_{-K}(m^{a^{k}_{j}}(0)-m^{a^{k}_{j}}(s_{k}))},  
\end{align}
where for all $i \in [p]$,
\begin{align}\label{tbddv_uiet}
C^{{i,r}}_{k} :=
\begin{cases}
\frac{\mu_{k-1}q_{k-1}}{q_{k}} C^{{i,r}}_{k-1} + \frac{1}{q_{k}}f^{{i,r}}(x_{k}) \quad \quad &\text{if} ~ a^{k}= a^{k-1}= \cdots = a^{0} \\
f^{{i,r}}(x_{k}) &\text{otherwise},
\end{cases}
\end{align}
where 
\begin{align}
q_{k} := 
\begin{cases}
 1 \quad \quad &\text{if}~k =0\\
 \mu_{k-1} q_{k-1} +1 \quad \quad &\text{if} ~k \ge 1,
\end{cases}
\nonumber
\end{align} 
for $\mu_{k} \in [\mu_{\min}, \mu_{\max}]$, with $\mu_{\min} \in [0,1)$ and $\mu_{\max} \in [\mu_{\min},1)$. Since $a^{k}_{j}$ is one of the indices $i$, the vector $(C^{a^{k}_{j},{r}}_{k})_{r \in [m]}$ corresponds to one of the weighted average evaluations $C^{{i,r}}_{k}$. For $\hat{N} = 0$,  note that $f^{{i}}(x_{l^{i}(k)}) = f^{{i}}(x_{k})$ and the reduction ratio of Max-NTRM reduces to (\ref{Ratios_Trm}).
For $\mu_{k} = 0$ for all $k$, we get $C^{i}_{k} = f^{{i}}(x_{k})$ and the reduction ratio of Avg-NTRM reduces to (\ref{Ratios_Trm}).  

We note that, if we consider $p = 1$ and hence $F(x) = \{f^{1}(x)\}$, and if we consider $p=1, r = 1$ and hence $F(x) = \{f^{1,1}(x)\}$, (\ref{NTR_Average_Ratios}) and (\ref{tbddv_uiet}) reduces to (12) in \cite{ghalavand2023two} for multiobjective and to (2.2) in \cite{mo2007nonmonotone} for singleobjective.  Thus, (\ref{NTR_Average_Ratios}) is a true generalization.

 In \eqref{NTR_Average_Ratios}, $a^{k}$ is kept the same across all past iterations to avoid conflict in choosing $f^{i}$ for the weighted average calculation, as the index of the objective functions that are in $a^{k}$ may change over different past iterations. If any of the $a^{k}$'s from all previous iterations is different, then, we use the current function value $f^{i,r}(x_{k})$ to calculate $\rho^{a^{k}_{j}}_{k}$, as in \cite{trust2025setopt}.

From \eqref{NTR_Average_Ratios} of Avg-NTRM, it is observed that $\rho^{a^{k}_{j}}_{k}>0$ implies $f^{{a}^{k}_{j}}(x_k + s_k) \prec_K  (C^{a^{k}_{j},{r}}_{k}(x_{k}))_{r \in [m]}$, which means that the step $s_{k}$ is accepted for $\eqref{vop_at_x_k}$ (see Proposition \ref{descveri}) and consequently also for $\eqref{fgcx}$ (see Corollary \ref{sg_1_ag1}). Therefore, Proposition \ref{descveri} and Corollary \ref{sg_1_ag1} give rise to the statement of Theorem \ref{hfrhnye_uyhio}, which fulfills the requirements of Max-NTRM and Avg-NTRM. The only requirements for these two NTRMs are that the sequences $\{f^{i}(x_{l^{i}(k)})\}$ and $\{C^{i}_{k}\}$ must be monotonic. This, in turn, allows the sequence ${f^{i}(x_{k})}$ to be non-monotonic, with $\left\{f^{i}(x_{l^{i}(k)})\right\} = \left\{\left(f^{{i,r}}(x_{l^{i,r}(k)})\right)_{r \in [m]}\right\}$ and $\{C^{i}_{k}\}= \left\{ (C^{{i,r}}_{k})_{r \in [m]}\right\}$.

Next, similar to Proposition 3.2 of \cite{trust2025setopt}, we show a relation  between the values of reduction ratios and the condition of step acceptance.
\begin{proposition}\label{descveri}
Let $x_k$ be not a $K$-critical point of \eqref{fgcx}. Then, for any $j \in [\omega_k]$, $s_{k}$ satisfies $f^{{a}^{k}_{j}}(x_k + s_k) \prec_K (f^{a^{k}_{j},{r}}(x_{l^{r}_{j}(k)}))_{r \in [m]}$ for Max-NTRM or $f^{{a}^{k}_{j}}(x_k + s_k) \prec_K  (C^{a^{k}_{j},{r}}_{k}(x_{k}))_{r \in [m]}$ for Avg-NTRM if and only if $\rho^{{a}^{k}_{j}}_{k} >0$. 
\end{proposition}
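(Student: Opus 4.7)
The plan is to translate the condition $\rho^{a^k_j}_k > 0$ into a sign statement on the numerator $\Delta_{-K}\!\left(f^{a^k_j}(x_k+s_k)-(\,\cdot\,)_{r\in[m]}\right)$ by first pinning down the sign of the denominator, and then to invoke Lemma \ref{proori}\textup{(\ref{fyf})} to convert a negative value of $\Delta_{-K}$ into the strict ordering $\prec_K$. Because the numerators of \eqref{Ratios_NTR_Max} and \eqref{NTR_Average_Ratios} differ only by which vector $(f^{a^k_j,r}(x_{l^{j,r}(k)}))_{r\in[m]}$ or $(C^{a^k_j,r}_k)_{r\in[m]}$ is subtracted from $f^{a^k_j}(x_k+s_k)$, the two cases can be handled by a single argument once the denominator $\Delta_{-K}\!\left(m^{a^k_j}_k(0)-m^{a^k_j}_k(s_k)\right) = \Delta_{-K}\!\left(-m^{a^k_j}_k(s_k)\right)$ has been shown to be strictly positive.

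First I would use the hypothesis that $x_k$ is not a $K$-critical point. By Theorem \ref{critopti}\textup{(\ref{sucfr})} we have $\theta(x_k)<0$, and since $(a^k,s_k)$ attains the minimum in \eqref{Sg_aux_17_01_2}, we obtain $\Theta_{x_k}(a^k,s_k)=\theta(x_k)<0$. By the definition of $\Theta_{x_k}$ in \eqref{namfun}, this forces
\begin{equation*}
\Delta_{-K}\!\left(m^{a^k_j}_k(s_k)\right) = \Delta_{-K}\!\left(\nabla f^{a^k_j}(x_k)^{\top}s_k+\tfrac{1}{2}s_k^{\top}\nabla^2 f^{a^k_j}(x_k)s_k\right) < 0
\end{equation*}
for every $j\in[\omega_k]$. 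By Lemma \ref{proori}\textup{(\ref{fyf})}, this means $m^{a^k_j}_k(s_k)\in\operatorname{int}(-K)$, equivalently $-m^{a^k_j}_k(s_k)\in\operatorname{int}(K)$. Since $K$ is pointed, $\operatorname{int}(K)\cap(-K)=\emptyset$, so $-m^{a^k_j}_k(s_k)\in\operatorname{int}[(-K)^c]$, and Lemma \ref{proori}\textup{(\ref{ufbmvfd})} then yields $\Delta_{-K}\!\left(-m^{a^k_j}_k(s_k)\right)>0$. Hence the denominator of $\rho^{a^k_j}_k$ is strictly positive in both formulations.

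Given this, the equivalence follows immediately from sign analysis. Writing $y_j := f^{a^k_j}(x_k+s_k)-(f^{a^k_j,r}(x_{l^{j,r}(k)}))_{r\in[m]}$ in the Max-NTRM case (respectively $y_j := f^{a^k_j}(x_k+s_k)-(C^{a^k_j,r}_k)_{r\in[m]}$ in the Avg-NTRM case), the minus sign in front of the fraction together with a positive denominator gives
\begin{equation*}
\rho^{a^k_j}_k>0 \ \Longleftrightarrow\ \Delta_{-K}(y_j)<0 \ \Longleftrightarrow\ y_j\in\operatorname{int}(-K),
\end{equation*}
where the second equivalence is Lemma \ref{proori}\textup{(\ref{fyf})}. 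The final statement $y_j\in\operatorname{int}(-K)$ is, by definition of $\prec_K$, exactly the desired acceptance inequality $f^{a^k_j}(x_k+s_k)\prec_K (f^{a^k_j,r}(x_{l^{j,r}(k)}))_{r\in[m]}$ or $f^{a^k_j}(x_k+s_k)\prec_K (C^{a^k_j,r}_k)_{r\in[m]}$, respectively.

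The only genuinely nontrivial step is verifying positivity of the denominator, which is where the non-criticality of $x_k$ plus the optimality of $(a^k,s_k)$ in \eqref{Sg_aux_17_01_2} is consumed; the rest reduces to routine applications of the characterization of $\Delta_{-K}$-signs via Lemma \ref{proori}. No smoothness or second-order details are required beyond what is already encoded in the model $m^{a^k_j}_k$, so the argument is purely structural and applies uniformly to both Max-NTRM and Avg-NTRM.
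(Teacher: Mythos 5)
Your proof is correct, and its overall skeleton is the same as the paper's: reduce the equivalence to a sign statement on the numerator, which Lemma \ref{proori}(\ref{fyf}) converts to the ordering $\prec_K$, after first securing strict positivity of the denominator. The one place where you genuinely diverge is in how that positivity is obtained. The paper compares $s_k$ with $s=0$ in the subproblem \eqref{ghtyu} to get $\Delta_{-K}(m^{a^k_j}_k(s_k))\le 0$, applies subadditivity of $\Delta_{-K}$ to conclude $\Delta_{-K}(m^{a^k_j}_k(0)-m^{a^k_j}_k(s_k))\ge 0$, and then invokes Corollary \ref{tncdfd} to upgrade this to a strict inequality under non-criticality. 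You instead read the strict inequality $\Delta_{-K}(m^{a^k_j}_k(s_k))<0$ directly off $\theta(x_k)=\Theta_{x_k}(a^k,s_k)<0$ (Theorem \ref{critopti} plus the max structure of \eqref{namfun}), and then pass to $\Delta_{-K}(-m^{a^k_j}_k(s_k))>0$ via pointedness of $K$, which gives $\operatorname{int}(K)\subseteq\operatorname{int}[(-K)^c]$ and hence Lemma \ref{proori}(\ref{ufbmvfd}) applies; this is valid since a pointed cone cannot contain $0$ in its interior. Your route is slightly more self-contained in that it bypasses the quantitative Corollary \ref{tncdfd}, relying only on the definition of $\theta$ and elementary cone geometry, whereas the paper's route reuses a corollary it needs elsewhere for the convergence analysis. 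Both arguments consume the non-criticality hypothesis at exactly the same point, and your unified treatment of the Max- and Avg-numerators matches the paper's.
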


\begin{proof}
 Let $s_{k}$ satisfy $f^{{a}^{k}_{j}}(x_k + s_k) \prec_K (f^{{a}^{k}_{j},{r}}(x_{l^{j,r}(k)}))_{r \in [m]}$ (Max-NTRM) or $f^{{a}^{k}_{j}}(x_k + s_k) \prec_K (C^{a^{k}_{j},r}_{k})_{r \in [m]}$ (Avg-NTRM). Then,  $\Delta_{-K}(f^{{a}^{k}_{j}}(x_{k}+s_{k})-(f^{{a}^{k}_{j},{r}}(x_{l^{j,r}(k)}))_{r \in [m]} ) < 0 $ (Max-NTRM) or $\Delta_{-K}(f^{{a}^{k}_{j}}(x_{k}+s_{k})-(C^{a^{k}_{j},r}_{k})_{r \in [m]}) < 0$ (Avg-NTRM). As $s_{k}$ is identified by solving (\ref{ghtyu}), we get 
\begin{align*}
\max_{ j \in [\omega_k]} \Delta_{-K}(m^{{a}^{k}_{j}}(s_{k})) ~&~ \le \max_{ j \in [\omega_k]} \left\{\Delta_{-K}(m^{{a}^{k}_{j}}(s_{k})),~\Delta_{-K}(\nabla f^{{a}^{k}_{j}}(x_{k})^{\top}s_k) \right\} \\ 
~&~ \le \max_{ j \in [\omega_k]} \left\{\Delta_{-K}(m^{{a}^{k}_{j}}(0)),~\Delta_{-K}(\nabla f^{{a}^{k}_{j}}(x_{k})^{\top}0) \right\} = 0, 
\end{align*}
i.e., $\Delta_{-K}(m^{{a}^{k}_{j}}(s_{k})) \le 0$. Thus,  
\begin{align}\label{dg_aux_19_01_1}
     \Delta_{-K}(m^{{a}^{k}_{j}}(0)-m^{{a}^{k}_{j}}(s_{k})) \ge \Delta_{-K}(m^{{a}^{k}_{j}}(0))-\Delta_{-K}(m^{{a}^{k}_{j}}(s_{k})) \ge 0.
 \end{align}
As $x_k$ is not $K$-critical, from Corollary \ref{tncdfd}, we get $\Delta_{-K}(m^{{a}^{k}_{j}}(0)-m^{{a}^{k}_{j}}(s_{k})) > 0$. Thus, 
 \begin{align}
     \rho^{{a}^{k}_{j}}_{k}=& ~~ -\frac{\Delta_{- K}(f^{{a}^{k}_{j}}(x_{k}+s_{k})-(f^{{a}^{k}_{j},{r}}(x_{l^{j,r}(k)}))_{r \in [m]})}{\Delta_{-K}
({m}^{{a}^{k}_{j}}(0)-{m}^{{{a}^{k}_{j}}}(s_{k}))} > 0~(\text{Max-NTRM})~\\
=& ~~ - \frac{\Delta_{- K}(f^{{a}^{k}_{j}}(x_{k}+s_{k})-((C^{a^{k}_{j},r}_{k})_{r \in [m]})}{\Delta_{-K}
({m}^{{a}^{k}_{j}}(0)-{m}^{{{a}^{k}_{j}}}(s_{k}))} > 0~(\text{Avg-NTRM}). 
\end{align}

Conversely, suppose $\rho^{{a}^{k}_{j}}_{k} > 0$. Then,  $\Delta_{-K}(f^{{a}^{k}_{j}}(x_{k} + s_{k})-(f^{{a}^{k}_{j},{r}}(x_{l^{j,r}(k)}))_{r \in [m]}) < 0$ (Max-NTRM) or $\Delta_{-K}(f^{{a}^{k}_{j}}(x_{k} + s_{k})- (C^{a^{k}_{j},r}_{k})_{r \in [m]}) < 0$ (Avg-NTRM), and hence 
$f^{{a}^{k}_{j}}(x_{k}+s_{k}) \prec_K (f^{{a}^{k}_{j},{r}}(x_{l^{j,r}(k)})_{r \in [m]}$ (Max-NTRM) or $f^{{a}^{k}_{j}}(x_{k}+s_{k}) \prec_K (C^{a^{k}_{j},r}_{k})_{r \in [m]}$ (Avg-NTRM). 
\end{proof}

Based on the reduction ratios (\ref{Ratios_NTR_Max}) and (\ref{NTR_Average_Ratios}), Max-NTRM and Avg-NTRM for \eqref{fgcx} are given in Algorithm \ref{alg_max} and Algorithm \ref{alg_avg}, respectively.

\begin{algorithm}
\caption{Max-NTRM algorithm for solving \eqref{fgcx}}
\label{alg_max}

\begin{enumerate}[1.]
\item  
\emph{Initialization and inputs}\\ 
Provide twice continuously differentiable functions  $f^i: \mathbb{R}^n \to \mathbb{R}^m$, $i = 1, 2, \ldots, p$, for the problem \eqref{fgcx} and the ordering cone $K \subseteq \mathbb R^{m}$.\\
Initialize parameters: iteration counter $k:=0$, initial point $x_{0}$, initial trust-region radius $\Omega_{0}$, maximum trust-region radius $\Omega_{\max}$, threshold parameters $\eta_{1}, \eta_{2} \in (0,1)$ to determine if the step is successful or very successful, fractions $\gamma_{1}, \gamma_{2}\in (0,1)$ to shrink the trust-region radius, tolerance value $\epsilon$ for stopping criterion, $N_{0} = 0$ is the initial number of past iterations to be considered in the maximum calculation, and $\hat{N}$ is an upper bound on the number of previous iterations over which the maximum value of $\{ f_{i}\}_{i\in [p]}$ is evaluated.
 
\item 
\emph{Find $K$-minimal elements and partition set} \label{gtesv_max} \\ 
Calculate $M_{k} := \text{Min}({F}(x_{k}), K) := \{r_1,r_2,\ldots,r_{w_k}\}$. \\
Calculate the partition set $P_{k}:= P_{x_{k}} := I_{r_1}\times I_{r_2}\times\cdots\times  I_{r_{\omega_k}}$. \\ 
Compute $p_k := \lvert P_{k} \rvert$ and $\omega_{k} := \lvert \text{Min}({F}(x_{k}), K)\rvert$. 

\item 
\label{step3_choice_of_a_max} \emph{Selection of an `$a^{k}$' from $P_{k}$} \\
Determine $\mathcal{B}_{k} = \{ s_{k} \in \mathbb{R}^{n} : \lVert s_{k} \rVert \le \Omega_{k}\}$.
Choose an element $a^{k}= (a^{k}_{1}, a^{k}_{2}, \ldots, a^{k}_{\omega_{k}}) \in P_{k}$ by 
\begin{align*}
(a^{k},s_{k}) \in \underset{(a,s)\in P_{k}\times \mathcal{B}_{k} }{\operatorname{argmin}} \max_{j \in [\omega_k]} \left\{ \Delta_{-K}\left(\nabla f^{a_j}(x_{k})^{\top}s + \tfrac{1}{2} s^{\top} \nabla^{2} f^{a_j}(x_k) s \right), \Delta_{-K}\left(\nabla f^{a_j}(x_{k})^{\top}s \right)\right\}.
\end{align*}
\end{enumerate}
\end{algorithm}

\begin{algorithm}   
\begin{enumerate}\setcounter{enumi}{3}  
\item \emph{Definition of model functions}\\ 
For all $j \in [{\omega}_{k}]$, calculate the model functions $m^{a^{k}_{j}}$ as
\begin{align*}
m^{a^{k}_{j}}_{k}(s) := \nabla f^{a^{k}_{j}}(x_{k})^{\top}s+\tfrac{1}{2}s^{\top} \nabla^{2}f^{a^{k}_{j}}(x_{k})s, ~~~\lVert s \rVert \le {\Omega}_{k}. 
\end{align*} 

\item \emph{Step computation}\label{yjuu_max} \\ 
Calculate $(t_{k}, s_{k})$ by solving the subproblem (\ref{ghtyu}).

\item \emph{Stopping criterion} \label{tocri_max} \\ 
If $\lvert t_{k} \rvert < \epsilon$, terminate the algorithm and output $x_{k}$ as a $K$-critical point of \eqref{fgcx}.\\ 
Otherwise, go to the next step.

\item \emph{Maximum value calculation}\\
For the maximum value of each $f^{i}$,  we first consider the set of indices used in past iterates, where $a^{k-q}$ is the same for $q = 0, 1, 2, \ldots, N_{k}$. Then, we calculate
\begin{align*}
   f^{i,r}(x_{l^{i,r}(k)}) = \max_{0 \le q \le N_{k}} f^{i,r}(x_{k-q}) ~\text{and}~ l^{i,r}(k) = \text{argmax} f^{i,r}(x_{k-q}).  
\end{align*}
Otherwise, 
 \begin{align}
    f^{i,r}(x_{l^{i,r}(k)}) = f^{i,r}(x_{k})  ~\text{and}~ l^{i,r}(k) = k.
 \end{align}
\item 
\emph{Reduction ratio}\\
Compute reduction ratio by (\ref{Ratios_NTR_Max}):\label{rati_ntr_nmax}
\begin{align*}
\rho^{a^{k}_{j}}_{k} :=-\frac{\Delta_{-K}\left(f^{a^{k}_{j}}(x_{k}+s_{k})-(f^{a^{k}_{j},{r}}(x_{l^{j,r}(k)}))_{r \in [m]}\right)}{\Delta_{-K}(m^{a^{k}_{j}}(0)-m^{a^{k}_{j}}(s_{k}))} ~\text{for all}~ j \in [\omega_{k}]. 
\end{align*}

\item \emph{Step acceptance criterion}
\label{getwqs_max}\\ 
If $\rho^{a^{k}_{j}}_{k}\ge \eta_{1}$ for all $j \in [{\omega}_{k}]$, then successful step. Set $x_{k+1} := x_{k}+ s_{k}.$\\
Else if $ \rho^{a^{k}_{j}}_{k}< \eta_{1}$ for at least one $j$, then unsuccessful step. Set $x_{k+1} := x_{k}$.

\item \emph{Update trust-region radius}\label{trust_region_radius_update_max} \\ 
Select 
\begin{equation}
\resizebox{\hsize}{!}{$
\Omega_{k+1}\in 
\begin{cases}
(\gamma_{2} \Omega_{k}, \Omega_{k}], & \text{if}~\eta_{1} \le \rho^{{a}^{k}_{j}}_{k} ~\forall~ j \in [{\omega}_{k}]~ \text{and}~ \exists~ l\in [\omega_{k}] ~\text{such that}~\rho^{{a}^{k}_{l}}_{k} < \eta_{2} ~ (\text{Successful}) \\
(\Omega_{k}, \infty), & \text{if}~ \eta_{2} \le \rho^{{a}^{k}_{j}}_{k} ~\forall ~j\in [{\omega}_{k}],~ (\text{Very successful})\\
[\gamma_{1}\Omega_{k}, \gamma_{2}\Omega_{k}], &\text{if}~ \exists~ l \in [\omega_{k}]~\text{such that}~ \rho^{{a}^{k}_{l}}_{k} ~\le \eta_{1}~ (\text{Unsuccessful}). 
\end{cases} $}
\end{equation}

\item \emph{Update $N_k$ and go to next iteration}\\
Set $N_{k+1} := \min\{N_k + 1,~\hat{N}\}$.\\
Set $k := k+1$ and go to Step \ref{gtesv_max}.

\end{enumerate}
\end{algorithm}

\newpage 
In Algorithm 1, by replacing the formula for the computation of the reduction ratio by (12), we get the algorithm for average-NTRM, which we refer to as Algorithm 2.

\setcounter{algorithm}{1} 
\begin{algorithm}[H]
\caption{Average-NTRM algorithm for solving \eqref{fgcx}}

\label{alg_avg}

\begin{list}{}{\leftmargin=2.5em \labelwidth=2.5em \labelsep=0.5em \itemindent=0em}
 \item[\text{1--6.}] Same as in Algorithm 1.
 \item[\text{7.}]\emph{Average value calculation}\\
 For the average value of each $f^{i}$,  we first consider the set of indices used in past iterates, where $a^{k-q}$ is the same for $q = 0, 1, 2, \ldots, N_{k}$. Then, we calculate
 \begin{align*}
C^{{i,r}}_{k} :=
\begin{cases}
\frac{\mu_{k-1}q_{k-1}}{q_{k}} C^{{i,r}}_{k-1} + \frac{1}{q_{k}}f^{{i,r}}(x_{k}) \quad \quad &\text{if} ~ a^{k}= a^{k-1}= \cdots = a^{0} \\
f^{{i,r}}(x_{k}) &\text{otherwise},
\end{cases}
\end{align*}
where 
\begin{align}
q_{k} := 
\begin{cases}
 1 \quad \quad &\text{if}~k =0\\
 \mu_{k-1} q_{k-1} +1 \quad \quad & \text{if} ~k \ge 1,
\end{cases}
\nonumber
\end{align} 
for $\mu_{k} \in [\mu_{\min}, \mu_{\max}]$, with $\mu_{\min} \in [0,1)$ and $\mu_{\max} \in [\mu_{\min},1)$.
 \item[\text{8.}] 
\emph{Reduction ratio}\\
Compute reduction ratio as per (\ref{NTR_Average_Ratios}):\label{rati_ntr_navg}
\begin{align*}
\rho^{a^{k}_{j}}_{k} = -\frac{\Delta_{-K}\left(f^{a^{k}_{j}}(x_{k}+s_{k}) - (C^{a^{k}_{j},{r}}_{k})_{r \in [m]}\right)}{\Delta_{-K}(m^{a^{k}_{j}}(0)-m^{a^{k}_{j}}(s_{k}))},
\end{align*}

 \item[\text{9--10.}] Same as in Algorithm 1.
\end{list}
\end{algorithm}

\subsection{Well-definedness of Algorithm \ref{alg_max} and Algorithm \ref{alg_avg}}\label{welldefine_non_monotone_trust_algori}

The well-definedness of the proposed Algorithm \ref{alg_max} and Algorithm \ref{alg_avg} depends on their respective Step \ref{step3_choice_of_a_max}, Step \ref{yjuu_max}, and Step \ref{rati_ntr_nmax}. Among them, Step \ref{step3_choice_of_a_max}, and Step \ref{yjuu_max} are the same as in TRM algorithm, whose well-definedness is given in Subsection 3.6 of \cite{trust2025setopt}.  

Step \ref{rati_ntr_nmax} calculates the respective reduction ratios of the proposed two algorithms.  

If we reach Step \ref{rati_ntr_nmax}, that means $x_{k}$ is  not a $K$-critical point because if $x_{k}$ is a $K$-critical point of \eqref{fgcx}, then by Theorem \ref{critopti}, $t_{k}= \theta(x_{k})=0,$ and algorithm should have stopped at Step \ref{tocri_max}. From Corollary \ref{tncdfd}, since $x_{k}$ is not $K$-critical, the solution $s_{k}$ is identified from the subproblem (\ref{ghtyu}), which ensures that predicted reduction $\Delta_{-K}(m^{{a}^{k}_{j}}(0)-m^{{a}^{k}_{j}}(s_{k}))$, in the denominator of reduction ratio, is positive. Also, for $s_{k}$ to be accepted, we have $f^{{a}^{k}_{j}}(x_k + s_k) \prec_K (f^{{a}^{k}_{j},{r}}(x_{l^{j,r}(k)}))_{r \in [m]}$ (Max-NTRM) and $f^{{a}^{k}_{j}}(x_k + s_k) \prec_K (C^{a^{k}_{j},r}_{k})_{r \in [m]}$ (Avg-NTRM). This, thus, creates an interdependence between reduction ratios $\rho^{a^{k}_{j}}_{k}$ and the condition of step acceptance as shown in Proposition \ref{descveri}. This interdependence causes Step \ref{rati_ntr_nmax} of Max-NTRM and Avg-NTRM for $\eqref{fgcx}$ to be analogous to the step acceptance criterion of the conventional Max-NTRM and Avg-NTRM, respectively.
Thus, the choice of $\rho^{a^{k}_{j}}_{k}$ for both the algorithms is well-defined. Hence, Step \ref{rati_ntr_nmax} is well-defined.\\

\section{Global Convergence Analysis}\label{global_conv_ana} 
In this section, we prove that both Max-NTRM and Avg-NTRM globally converge to a $K$-critical point for \eqref{fgcx}. For this, we first make a few assumptions about the set-valued map $F = \{f^{i}\}_{i \in [p]}$. These assumptions are very common in single and multi-objective non-monotone trust-region schemes.
 
\begin{assumption}\label{fun_bndbelow}
Function $f^{i}$, for all $i \in [p]$, is bounded below.
\end{assumption}

\begin{assumption}
    There exists $\mathcal{K}_{3}>0$ such that for each $ i \in [p]$, 
    \begin{align*}
        \lVert \nabla f^{{i}}(x) \rVert \le \mathcal{K}_{3} ~\text{for all}~x \in \mathbb{R}^{n}.
       \end{align*}
\label{grad_bnd}
\end{assumption}

\begin{assumption}\label{hess_bnd} 
Hessian of the function $f^{i}$, for all $i \in [p]$, is uniformly bounded, i.e., there exists a $\mathcal{K}_{1}>0$ such that
\begin{align*}
&\lVert \nabla^{2} f^{i}(x) \rVert \le \mathcal{K}_{1} ~\text{ for all } x \in \mathbb{R}^n.
\end{align*} 
\end{assumption}

\begin{assumption}\label{boun_term} \label{grwghcxdbn}
There exists a constant vector $M$ in $\mathbb R^{m}$ whose every coordinate is a positive real number, such that 
 \begin{align*}
      s^{\top}\nabla^{2} f^{i}(x)s \preceq_{K} 
 M \lVert s\rVert^{2} ~\text{for all}~ i \in [p] ~\text{and for all}~x \in \mathbb {R}^{n},
 \end{align*} 
 where $s^{\top}\nabla^{2} f^{i}(x)s := \left(   s^{\top}\nabla^{2} f^{i,1}(x_{k})s, 
s^{\top}\nabla^{2} f^{i,2}(x_{k})s, 
 \ldots, 
 s^{\top} \nabla^{2} f^{i,m}(x_{k})s \right)^\top$.
\end{assumption}

\begin{assumption}\label{label_boundd}  
The level set  ${\mathcal{L}_{0}} := \{ x \in \mathbb R^{n}: F(x) \preceq^{l}_{K} F(x_{0})\}$, is bounded. 
\end{assumption}

With these assumptions, we present a series of lemmas, corollaries, and theorems that ultimately lead to the proof of global convergence of the proposed algorithms. First, we recall Theorem \ref{fsgcsb} and Corollary \ref{tncdfd} from \cite{trust2025setopt}. Then, we present Lemma \ref{WeY_rjt}, to be used in Corollary \ref{utrw_kkjx}, to show a bound on the sufficient decrease of the model function $m^{a^{k}}$. Then, in Theorem \ref{hfrhnye_uyhio}, we show that the sequence of function values generated by the two algorithms monotonically decreases. Next, we present Corollary \ref{sg_1_ag1} that connects the accepted step of \eqref{vop_at_x_k} to the accepted step of \eqref{fgcx}. Then, Theorem \ref{rtyue_ouit} ensures that the algorithms always generate a successful step after a finite number of unsuccessful steps. Next, in Lemma \ref{admits_limit_max_avg}, we show that the sequence of functions generated by the two algorithms admits a limit point. Finally, in Theorem \ref{gjhg_uyhugy}, we show that the sequence of iterates generated by the two algorithms converges to the $K$-critical point of $\eqref{fgcx}$.

\begin{theorem}\textup{\cite{trust2025setopt}}\label{fsgcsb}
Let $x_{k}$ be not a $K$-critical point of \eqref{fgcx}, and $v \in \mathbb R^{n}$ be a descent direction of $f^{a^{k}_j}$ at $x_{k}$ for all $j \in [\omega_k]$. Then, for each $j \in [\omega_{k}]$, there exists $\bar{t}_j >0$ satisfying $\lVert {\bar{t}_j} v \rVert \le \Omega_{k}$ such that 
\begin{align*}
\Delta_{-K}(m^{{a}^{k}_{j}}\left({\bar{t}_j}v\right)) \le  \Delta_{-K}(m^{{a}^{k}_{j}}\left(tv\right)) \text{ for all }  {t} \ge 0 ~\text{satisfying}~\lVert {t} v\rVert \le \Omega_{k}.  
\end{align*}
 In addition, for each $j \in [\omega_{k}]$, 
 \begin{align*}
 & \Delta_{-K}(m^{{a}^{k}_{j}}(0)- m^{{a}^{k}_{j}}\left({\bar{t}_j}v\right))\ge -\tfrac{1}{2} \frac{\Delta_{-K}(\nabla f^{a^{k}_{j}}(x_{k})^{\top} v) }{\lVert v \rVert} \min\left\{-\frac{\Delta_{-K}(\nabla f^{a^{k}_{j}}(x_{k})^{\top}v)}{\lVert  v \rVert \mathcal{K}_{1}}, ~\Omega_{k} \right\}.
\end{align*}
\end{theorem}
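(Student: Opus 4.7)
The plan is to follow the classical Cauchy-point argument from scalar trust-region theory, adapted to the oriented-distance scalarization by systematically exploiting the sublinearity and positive-homogeneity properties collected in Lemma \ref{proori}. Fix $j \in [\omega_k]$ and, on the compact interval $[0, \Omega_k/\lVert v\rVert]$, define $\phi_j(t) := \Delta_{-K}(m^{a^k_j}(tv))$. Continuity of $\Delta_{-K}$ (Lemma \ref{proori}(\ref{aeruin})) together with continuity of $m^{a^k_j}$ guarantees that $\phi_j$ attains its minimum on this interval; I take any such minimizer to be $\bar t_j$, which immediately gives the first assertion. Because $v$ is a descent direction of $f^{a^k_j}$ at $x_k$, we have $\nabla f^{a^k_j}(x_k)^{\top} v \in -\operatorname{int}(K)$, and hence $g_j := \Delta_{-K}(\nabla f^{a^k_j}(x_k)^{\top} v) < 0$ by Lemma \ref{proori}(\ref{fyf}); this in turn forces $\bar t_j > 0$.

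The next step is to derive a scalar quadratic majorant of $\phi_j$. Sublinearity of $\Delta_{-K}$ (Lemma \ref{proori}(\ref{tewyueui})) combined with Lemma \ref{proori}(\ref{ubrsyu})---which simplifies to $\Delta_{-K}(\lambda y) = \lambda\,\Delta_{-K}(y)$ for every $\lambda > 0$, since $K$ being a cone forces $\lambda^{-1}(-K) = -K$---yields
\[
\phi_j(t) \;\le\; t\,g_j + \tfrac{t^2}{2}\,\Delta_{-K}\!\left(v^{\top} \nabla^2 f^{a^k_j}(x_k)\,v\right).
\]
Using $\Delta_{-K}(0) = 0$ (Lemma \ref{proori}(\ref{hryx})), the $1$-Lipschitz property (Lemma \ref{proori}(\ref{aeruin})), and Assumption \ref{hess_bnd}, the last term is at most $\mathcal{K}_1 \lVert v\rVert^2$. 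Hence $\phi_j(t) \le \psi(t) := t\,g_j + \tfrac{t^2}{2}\mathcal{K}_1 \lVert v\rVert^2$, a scalar convex quadratic with negative linear coefficient.

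What remains is the standard two-case analysis of the constrained minimum of $\psi$ on $[0,\Omega_k/\lVert v\rVert]$. If the unconstrained minimizer $t^{\star} = -g_j/(\mathcal{K}_1 \lVert v\rVert^2)$ is feasible then $\min \psi = -g_j^2/(2\mathcal{K}_1 \lVert v\rVert^2)$; otherwise $\psi$ is decreasing throughout the interval, and the boundary condition $-g_j > \mathcal{K}_1 \lVert v\rVert\Omega_k$ allows one to absorb the quadratic term and obtain $\psi(\Omega_k/\lVert v\rVert) \le \Omega_k g_j/(2\lVert v\rVert)$. Packaging both cases together gives
\[
\phi_j(\bar t_j) \;\le\; \tfrac{1}{2}\,\tfrac{g_j}{\lVert v\rVert}\,\min\!\left\{\tfrac{-g_j}{\mathcal{K}_1 \lVert v\rVert},\;\Omega_k\right\}.
\]
To convert this into the asserted lower bound, I apply the reverse-subadditivity inequality in Lemma \ref{proori}(\ref{tewyueui}) with first argument $0$ and second argument $m^{a^k_j}(\bar t_j v)$: noting $m^{a^k_j}(0) = 0$ and $\Delta_{-K}(0) = 0$, this gives $\Delta_{-K}(m^{a^k_j}(0) - m^{a^k_j}(\bar t_j v)) \ge -\phi_j(\bar t_j)$, and negating the previous display delivers the theorem.

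The main obstacle I anticipate is purely bookkeeping with inequalities: $\Delta_{-K}$ is merely sublinear, not linear, so each step above must be arranged so the subadditivity inequalities all point in the same direction toward the scalar quadratic majorant $\psi$, and the final sign flip must be done through the reverse-subadditivity form in Lemma \ref{proori}(\ref{tewyueui}) rather than by naively moving a minus sign inside $\Delta_{-K}$ (which is not permitted). Once these directions are respected, the remaining univariate optimization is routine.
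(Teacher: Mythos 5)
Your proof is correct, and it is the standard Cauchy-point/sufficient-decrease argument that this result rests on; note that the present paper does not actually reprove Theorem~\ref{fsgcsb} but imports it from \cite{trust2025setopt}, so there is no in-paper proof to diverge from. Your handling of the key technical points --- positive homogeneity of $\Delta_{-K}$ via Lemma~\ref{proori}(\ref{ubrsyu}) with $\lambda^{-1}(-K)=-K$, the quadratic majorant $\psi$ built from subadditivity, the Lipschitz bound $\Delta_{-K}(v^{\top}\nabla^{2}f^{a^{k}_{j}}(x_{k})v)\le \mathcal{K}_{1}\lVert v\rVert^{2}$, and the final sign flip through the reverse-subadditivity inequality of Lemma~\ref{proori}(\ref{tewyueui}) rather than pulling a minus sign inside $\Delta_{-K}$ --- is exactly what is needed and matches the intended derivation.
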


\begin{cor}\cite{trust2025setopt}\label{tncdfd} 
If $s_{k}$ is a solution of the subproblem \eqref{ghtyu}, and $x_{k}$ is a not a $K$-critical point of \eqref{fgcx}, then there exists a positive constant $\beta$ such that  for all $j \in [\omega_{k}],  \Delta_{-K}(m^{{a}^{k}_{j}}_{k}(0)) \ge \Delta_{-K} (m^{{a}^{k}_{j}}_{k}(s_{k}))$, and
 \begin{equation*}
 \Delta_{-K}(m^{{a}^{k}_{j}}_{k}(0)- m^{{a}^{k}_{j}}_{k}\left(s_{k}\right))  \ge -\tfrac{\beta}{2} \frac{\Delta_{-K}(\nabla f^{{a}^{k}_{j}}(x_{k})^{\top} s_{k})}{\lVert s_{k} \rVert} \min\left\{-\frac{\Delta_{-K}(\nabla f^{{a}^{k}_{j}}(x_{k})^{\top}s_{k})}{\lVert  s_{k} \rVert \mathcal{K}_{1}}  , \Omega_{k}  \right\}.
 \end{equation*}
\end{cor}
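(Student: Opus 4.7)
The plan is to establish both conclusions by combining the optimality of $(t_{k},s_{k})$ for the scalarized subproblem \eqref{ghtyu} with the Cauchy-type decrease afforded by Theorem \ref{fsgcsb}.

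For the first inequality, I would note that $(t,s)=(0,0)$ is feasible for \eqref{ghtyu}: since $m^{a^{k}_{j}}_{k}(0)=0$ and $\nabla f^{a^{k}_{j}}(x_{k})^{\top}0=0$, Lemma \ref{proori}(\ref{hryx}) reduces each of the first two constraints of \eqref{ghtyu} to $0\le 0$. Optimality of $(t_{k},s_{k})$ then forces $t_{k}\le 0$, so the model constraint $\Delta_{-K}(m^{a^{k}_{j}}_{k}(s_{k}))\le t_{k}$ immediately yields $\Delta_{-K}(m^{a^{k}_{j}}_{k}(s_{k}))\le 0=\Delta_{-K}(m^{a^{k}_{j}}_{k}(0))$ for every $j\in[\omega_{k}]$.

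For the Cauchy-decrease inequality, I would first verify that $s_{k}$ is a strict $\widetilde K$-descent direction for $\widetilde f^{a^{k}}$ at $x_{k}$. Since $x_{k}$ is not $K$-critical, Theorem \ref{critopti}(\ref{sucfr}) gives $\theta(x_{k})=t_{k}<0$, and the gradient constraint in \eqref{ghtyu} then forces $\Delta_{-K}(\nabla f^{a^{k}_{j}}(x_{k})^{\top}s_{k})\le t_{k}<0$ for all $j$, which, via Lemma \ref{proori}(\ref{fyf}), places $\nabla f^{a^{k}_{j}}(x_{k})^{\top}s_{k}$ in $-\mathrm{int}(K)$. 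Applying Theorem \ref{fsgcsb} with the descent direction $v=s_{k}$ then produces, for each $j$, a scalar $\bar t_{j}>0$ with $\lVert \bar t_{j}s_{k}\rVert\le \Omega_{k}$ satisfying the target right-hand-side inequality but evaluated at $\bar t_{j}s_{k}$ instead of at $s_{k}$.

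The last step is to transfer this per-index decrease from $\bar t_{j}s_{k}$ to $s_{k}$ itself. My plan is to exploit that $s_{k}$ minimises the max in \eqref{hdhvsd}: along the ray $t\mapsto t s_{k}$, the scalar function $t\mapsto \Delta_{-K}(m^{a^{k}_{j}}_{k}(ts_{k}))$ attains at $t=1$ a value within a uniform factor $\beta\in(0,1]$ of its value at $t=\bar t_{j}$. Producing such a $\beta$ independent of $k$ and $j$ is the main technical obstacle; I would handle it by invoking the curvature bound of Assumption \ref{hess_bnd}, the Lipschitz property of $\Delta_{-K}$ from Lemma \ref{proori}(\ref{aeruin}), and the optimality of $s_{k}$ against the synchronised step $\tilde t s_{k}$ with $\tilde t:=\min_{j}\bar t_{j}$, so that the loss from moving from the individual Cauchy points $\bar t_{j}s_{k}$ to the common iterate $s_{k}$ is bounded uniformly. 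Combining this with the inequality supplied by Theorem \ref{fsgcsb} then yields the claimed lower bound on $\Delta_{-K}(m^{a^{k}_{j}}_{k}(0)-m^{a^{k}_{j}}_{k}(s_{k}))$ with a single universal $\beta>0$.
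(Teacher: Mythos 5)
First, a point of orientation: this paper does not prove Corollary \ref{tncdfd} at all — it is imported verbatim from \cite{trust2025setopt} — so there is no in-paper proof to compare yours against and your proposal has to stand on its own. Its first two steps do stand. The feasibility of $(t,s)=(0,0)$ for \eqref{ghtyu} indeed forces $t_{k}\le 0$, and the model constraint then gives $\Delta_{-K}(m^{a^{k}_{j}}_{k}(s_{k}))\le t_{k}\le 0=\Delta_{-K}(m^{a^{k}_{j}}_{k}(0))$ for every $j$; likewise, $\theta(x_{k})=t_{k}<0$ (Theorem \ref{critopti}\eqref{sucfr}) together with the gradient constraint and Lemma \ref{proori}\eqref{fyf} correctly shows that $s_{k}$ is a common descent direction of all the $f^{a^{k}_{j}}$ at $x_{k}$, so Theorem \ref{fsgcsb} is applicable with $v=s_{k}$.

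The genuine gap is the final transfer from the Cauchy points $\bar t_{j}s_{k}$ to $s_{k}$ itself, and your sketch does not close it. Theorem \ref{fsgcsb} says that $\bar t_{j}$ \emph{minimises} $t\mapsto\Delta_{-K}(m^{a^{k}_{j}}_{k}(ts_{k}))$ along the ray, so $\Delta_{-K}(m^{a^{k}_{j}}_{k}(\bar t_{j}s_{k}))\le\Delta_{-K}(m^{a^{k}_{j}}_{k}(s_{k}))$: the model decrease at $s_{k}$ is bounded \emph{above}, not below, by the Cauchy decrease, which is the opposite of what the corollary asserts. The only lower bound available on the decrease at $s_{k}$ comes from the minimax optimality of $(t_{k},s_{k})$ in \eqref{hdhvsd}, but that optimality only controls $\max_{j}$ of the constraint functions at a competitor point; comparing against your synchronised step $\tilde t s_{k}$ with $\tilde t=\min_{j}\bar t_{j}$ therefore yields a bound governed by the \emph{worst} index at $\tilde t s_{k}$, not by the $j$-specific quantity $\Delta_{-K}(\nabla f^{a^{k}_{j}}(x_{k})^{\top}s_{k})$ that appears on the right-hand side of the corollary. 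Naming the ingredients (Assumption \ref{hess_bnd}, Lemma \ref{proori}\eqref{aeruin}, optimality of $s_{k}$) is not a substitute for exhibiting how they resolve these two directional mismatches and produce a single $\beta>0$ valid for all $k$ and $j$; that resolution is precisely the content of the result. A complete write-up would also need the (easy but omitted) bridge $\Delta_{-K}(m^{a^{k}_{j}}_{k}(0)-m^{a^{k}_{j}}_{k}(s_{k}))\ge -\Delta_{-K}(m^{a^{k}_{j}}_{k}(s_{k}))$ from Lemma \ref{proori}\eqref{tewyueui}, since the claimed inequality concerns $\Delta_{-K}$ of the difference rather than the difference of the $\Delta_{-K}$ values.
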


Next, we present a lemma that will be used in Corollary \ref{utrw_kkjx} to show the sufficient decrease of the model $m^{a^{k}}$ in terms of the optimal value $\theta(x_k)$ of the subproblem (\ref{ghtyu}).

\begin{lemma}\label{WeY_rjt}
Let $x_{k}$ be a not a $K$-critical point for \eqref{fgcx}. Further, let $f^{i,r}\in C^{2}(\mathbb R^{n}, \mathbb R)$, and let $\nabla^{2} f^{i,r}(x)$ be positive definite for all $i\in [p]$, and $ r \in [m]$. Additionally, set $ T := - \Delta_{-K}(-M) >0$, where $M$ fulfills Assumption \ref{grwghcxdbn}. Then, $s_{k}$ given by solving the subproblem (\ref{ghtyu}) satisfies
\begin{align}
    & \max_{j \in [\omega_{k}]} \{\Delta_{-K}(f^{a^{k}_{j}}(x_k))^{\top} s_{k}) \} \le - \lvert \theta(x_{k}) \rvert \label{ufxvvnn}\\
  ~\text{and}~  & \lVert s_{k} \rVert^{2} \le \frac{4}{T} \lvert \theta(x_{k})\rvert.\label{hgjkg_arin}
\end{align}
\end{lemma}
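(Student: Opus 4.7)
Both inequalities rest on the optimality of $(a^{k},s_{k})$ in \eqref{rbdwbyew}, together with the fact that $x_{k}$ not being $K$-critical forces $\theta(x_{k})<0$ by Theorem~\ref{critopti}(b), so $\theta(x_{k})=-|\theta(x_{k})|$. For \eqref{ufxvvnn}, since $(a^{k},s_{k})$ attains $\theta(x_{k})=\Theta_{x_{k}}(a^{k},s_{k})$, each of the $2\omega_{k}$ entries inside the max in \eqref{namfun} is bounded above by $\theta(x_{k})$. In particular, for every $j\in[\omega_{k}]$,
\[
\Delta_{-K}\bigl(\nabla f^{a^{k}_{j}}(x_{k})^{\top}s_{k}\bigr)\le \theta(x_{k})=-|\theta(x_{k})|,
\]
and taking the maximum over $j$ on the left yields \eqref{ufxvvnn} directly.

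For \eqref{hgjkg_arin}, I will start from the companion optimality inequality
\[
\Delta_{-K}\!\Bigl(\nabla f^{a^{k}_{j}}(x_{k})^{\top}s_{k}+\tfrac{1}{2}s_{k}^{\top}\nabla^{2}f^{a^{k}_{j}}(x_{k})s_{k}\Bigr)\le \theta(x_{k})\qquad(j\in[\omega_{k}])
\]
and combine it with Assumption~\ref{grwghcxdbn}. The plan is (i) to split the argument of $\Delta_{-K}$ into its gradient and Hessian pieces via sub-additivity (Lemma~\ref{proori}(\ref{tewyueui})); (ii) to use the cone estimate $s_{k}^{\top}\nabla^{2}f^{a^{k}_{j}}(x_{k})s_{k}\preceq_{K}M\|s_{k}\|^{2}$ together with monotonicity (Lemma~\ref{proori}(\ref{htytuttd})) to replace the Hessian quadratic form by $\tfrac{1}{2}M\|s_{k}\|^{2}$; and (iii) to apply homogeneity (Lemma~\ref{proori}(\ref{ubrsyu})) to pull out $\|s_{k}\|^{2}$, whereupon the defining identity $T=-\Delta_{-K}(-M)$ converts the resulting $\Delta_{-K}\!\bigl(-\tfrac{1}{2}M\|s_{k}\|^{2}\bigr)$ into $-\tfrac{T}{2}\|s_{k}\|^{2}$. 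Positive-definiteness of each $\nabla^{2}f^{i,r}$ is what guarantees the cone-nonnegativity of $s_{k}^{\top}\nabla^{2}f^{a^{k}_{j}}(x_{k})s_{k}$ needed for step (ii) to proceed with the correct sign. Combining this with the gradient bound \eqref{ufxvvnn} should produce an inequality of the shape $|\theta(x_{k})|\ge \tfrac{T}{4}\|s_{k}\|^{2}$, which rearranges to \eqref{hgjkg_arin}.

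\textbf{Main obstacle.} The principal technical difficulty is keeping each sub-additivity and monotonicity step oriented in the useful direction: Lemma~\ref{proori}(\ref{tewyueui}) supplies both $\Delta_{-K}(y+z)\le \Delta_{-K}(y)+\Delta_{-K}(z)$ and the rearranged $\Delta_{-K}(y-z)\ge \Delta_{-K}(y)-\Delta_{-K}(z)$, and only a particular decomposition of $\nabla f^{a^{k}_{j}}(x_{k})^{\top}s_{k}+\tfrac{1}{2}s_{k}^{\top}\nabla^{2}f^{a^{k}_{j}}(x_{k})s_{k}$ combines with Assumption~\ref{grwghcxdbn} so that the $\tfrac{T}{4}$ coefficient actually survives. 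The factor $4$ (as opposed to the $2$ that comes out of a naive splitting) strongly suggests that one additionally tests $\tfrac{1}{2}s_{k}\in\mathcal{B}_{k}$ against the optimality of $s_{k}$, using the $\alpha^{2}=\tfrac{1}{4}$ rescaling of the Hessian term to introduce the correct constant. Carrying out these manipulations entirely within the oriented-distance calculus, where the usual convexity/concavity tools are unavailable, is where the bulk of the technical care will be required.
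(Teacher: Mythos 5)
Your argument for \eqref{ufxvvnn} is correct and is essentially the paper's: each entry inside the max defining $\Theta_{x_k}(a^{k},s_{k})$ is bounded by the max itself, which equals $\theta(x_{k})=-\lvert\theta(x_{k})\rvert$ by Theorem~\ref{critopti}(b)(\ref{gpourer}); the paper reaches the same conclusion by first using positive definiteness to compare the linear and quadratic entries, but your shortcut is legitimate. For \eqref{hgjkg_arin}, however, your route diverges sharply from the paper's: the paper does \emph{not} argue primally. It writes down the Lagrangian \eqref{wer_opty} of the subproblem \eqref{ghtyu}, uses complementary slackness to identify $\tau_{1}=\theta(x_{k})$ with multiplier-weighted combinations of the active constraints, and uses the stationarity condition \eqref{jghutfvre} to cancel the gradient terms between the two constraint families, which isolates $\sum_{j}\lambda^{1}_{j}\Delta_{-K}\bigl(-s_{k}^{\top}\nabla^{2}f^{a^{k}_{j}}(x_{k})s_{k}\bigr)$ and yields $2\tau_{1}\le\tfrac{1}{2}\Delta_{-K}(-M)\lVert s_{k}\rVert^{2}=-\tfrac{T}{2}\lVert s_{k}\rVert^{2}$. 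Your scaling idea ($\alpha=\tfrac12$, hence the factor $4$) is the classical primal counterpart and can in principle be completed, but it is a different proof.

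Two concrete problems with your plan as written. First, the orientation of step (ii)/(iii): what the argument needs is an upper bound on $\Delta_{-K}\bigl(-s_{k}^{\top}\nabla^{2}f^{a^{k}_{j}}(x_{k})s_{k}\bigr)$ by the strictly negative quantity $\Delta_{-K}(-M)\lVert s_{k}\rVert^{2}=-T\lVert s_{k}\rVert^{2}$; replacing the (un-negated) Hessian form by its cone upper bound $M\lVert s_{k}\rVert^{2}$ via monotonicity, inside an expression you already know is $\le\theta(x_{k})$, only produces an upper bound by a \emph{positive} quantity and gives nothing. The sign that makes the argument go is attached to the \emph{negated} quadratic form, which your decomposition never produces explicitly; you would need the chain $\theta(x_{k})\le\Theta_{x_k}(a^{k},\tfrac12 s_{k})\le\tfrac12\Delta_{-K}\bigl(m^{a^{k}_{j_0}}(s_{k})\bigr)+\tfrac18\Delta_{-K}\bigl(-s_{k}^{\top}\nabla^{2}f^{a^{k}_{j_0}}(x_{k})s_{k}\bigr)\le\tfrac12\theta(x_{k})-\tfrac{T}{8}\lVert s_{k}\rVert^{2}$, obtained from homogeneity and the subadditive split of $\tfrac12\nabla f^{\top}s_{k}+\tfrac18 s_{k}^{\top}\nabla^{2}f\,s_{k}=\tfrac12\bigl(\nabla f^{\top}s_{k}+\tfrac12 s_{k}^{\top}\nabla^{2}f\,s_{k}\bigr)-\tfrac18 s_{k}^{\top}\nabla^{2}f\,s_{k}$. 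Second, testing $\tfrac12 s_{k}$ requires handling the case where the max in $\Theta_{x_k}(a^{k},\tfrac12 s_{k})$ is attained by a pure-gradient entry, in which case you only recover the tautology $\theta(x_{k})\le\tfrac12\theta(x_{k})$; you must invoke positive definiteness to show the quadratic entries strictly dominate so this case cannot occur. Neither point is addressed in your plan, and without them the proposal does not yet constitute a proof of \eqref{hgjkg_arin}.
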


\begin{proof}
Since $x_{k}$ is a not a $K$-critical point of \eqref{fgcx}, we have $s_{k} := s(x_{k}) \neq 0$ and $\theta(x_{k}) \neq 0$ from Theorem \ref{critopti}. Then, from the assumption that $\nabla^{2} f^{i,r}(x)$ is positive definite for all $i \in [p]$ and  $r \in [m]$, and from (\ref{sdgftdu}), we have

\allowdisplaybreaks
\resizebox{\linewidth}{!}{
  \begin{minipage}{\linewidth}
\begin{align*}
 &  0 < s_{k}^{\top} \nabla^{2} f^{i,r}(x_{k}) s_{k} \\
\implies & s_{k}^{\top} \nabla^{2} f^{i,r}(x_{k}) s_{k} \in \mathbb{R}_{+} \\
\implies & s_{k}^{\top} \nabla^{2} f^{i}(x_{k}) s_{k} \in \mathbb{R}^{n}_{+} ~\text{as}~ s^{\top}\nabla^{2} f^{i}(x)s = \left(   s^{\top}\nabla^{2} f^{i,1}(x_{k})s, 
s^{\top}\nabla^{2} f^{i,2}(x_{k})s, 
 \ldots, 
 s^{\top} \nabla^{2} f^{i,m}(x_{k})s \right)^\top\\
\implies & s_{k}^{\top} \nabla^{2} f^{i}(x_{k}) s_{k} \in \{\mathbb{R}^{n}_{+}\}\subseteq K  ~\text{for all}~i \in [p] \\
  \implies  & 0 \prec_{K} s_{k}^{\top}\nabla^{2} f^{i}(x_{k})s_{k} ~\text{for all}~i \in [p]\\
  \implies  &  0 \prec_{K} s_{k}^{\top} \nabla^{2} f^{a^{k}_{j}}(x_{k})s_{k}~\text{for all}~ j \in [\omega_{k}]\\
  \implies & \nabla f^{a^{k}_{j}}(x_{k})^{\top} s_{k} \prec_{K} \nabla f^{a^{k}_{j}}(x_{k})^{\top} s_{k} + \frac{1}{2} s_{k}^{\top} \nabla^{2} f^{a^{k}_{j}}(x_{k}) s_{k}~\text{for all}~j \in [\omega_{k}] \\
  \implies & \Delta_{-K}(\nabla f^{a^{k}_{j}}(x_{k})^{\top}s_{k}) < \Delta_{-K}(\nabla f^{a^{k}_{j}}(x_{k})^{\top} s_{k}+ \frac{1}{2}s_{k}^{2}\nabla^{2}f^{a^{k}_{j}}(x_{k})s_{k})~\text{for all}~j \in [\omega_{k}]\\
  \implies & \max_{j \in [\omega_{k}]} \{\Delta_{-K}(\nabla f^{a^{k}_{j}}(x_{k})^{\top}s_{k})\}\\&< \max_{j \in [\omega_{k}]} \{\Delta_{-K}(\nabla f^{a^{k}_{j}}(x_{k})^{\top} s_{k}+ \frac{1}{2}s_{k}^{2}\nabla^{2}f^{a^{k}_{j}}(x_{k})s_{k})\}\\
&< \max_{j \in [\omega_{k}]} \{\Delta_{-K}(\nabla f^{a^{k}_{j}}(x_{k})^{\top} s_{k}+ \frac{1}{2}s_{k}^{2}\nabla^{2}f^{a^{k}_{j}}(x_{k})s_{k}), \Delta_{-K}(\nabla f^{a^{k}_{j}}(x)^{\top} s_{k})\}\\
&= \theta(x_{k}).
\end{align*}
  \end{minipage}
}

Now, since $x_{k}$ is not $K$-critical, we have $\theta(x_{k}) < 0$, i.e., $\theta(x_{k}) = -\lvert \theta(x_{k}) \rvert$. This proves (\ref{ufxvvnn}). 

For proving the second inequality (\ref{hgjkg_arin}), the Lagrangian of problem (\ref{ghtyu}) is given by
\begin{align}\label{wer_opty}
L((\tau_{1},s_{k}),(\lambda^{1},\lambda^{2}, \lambda^{3})) := ~&\tau_{1} + \sum_{j =1}^{\omega_{k}} \lambda^{1}_{j}(\Delta_{-K}(\nabla f^{a^{k}_{j}}(x_{k})^{\top} s_{k}+\frac{1} {2}s^{\top}_{k}\nabla^{2}f^{a^{k}_{j}}(x_{k})s_{k})-\tau_{1})) + \sum_{j=1}^{\omega_{k}} \lambda^{2}_{j} \nonumber\\&(\Delta_{-K}(\nabla f^{a^{k}_{j}}(x_{k})^{\top}s_{k})-\tau_{1}))+ \lambda^{3}\left(\frac{\lVert s_{k} \rVert^{2}}{2}- \frac{\Omega^{2}_{k}}{2}\right)~\text{for all}~j \in [\omega_{k}],
    \end{align}
   with $\lambda^{1}=(\lambda^{1}_{1}, \lambda^{1}_{2}, \ldots, \lambda^{1}_{\omega_{k}}) \in \mathbb{R}^{\omega_{k}}, \lambda^{2}= (\lambda^{2}_{1},\lambda^{2}_{2}, \ldots, \lambda^{2}_{\omega_{k}} )\in \mathbb R^{\omega_{k}}$ and $ \lambda^{3} \in \mathbb R$.
The Karush-Kuhn-Tucker conditions for the problem (\ref{ghtyu}) are given by
\begin{align}
\begin{split}
&\sum_{j=1}^{\omega_{k}}\lambda^{1}_{j} \Delta_{-K}\left((\nabla f^{a^{k}_{j,r}}(x_{k})^{\top}s_{k})_{r \in [m]} + (s^{\top}_{k}\nabla^{2} f^{a^{k}_{j,r}}(x_{k})s_{k})_{r \in [m]}\right) \\
&\quad+ \sum_{j=1}^{\omega_{k}} \lambda^{2}_{j} \Delta_{-K}\left((\nabla f^{a^{k}_{j,r}}(x_{k})^{\top}s_{k})_{r \in [m]}\right) + \lambda^{3} s_{k} = 0 \quad \forall j \in [\omega_{k}]
\end{split} \label{jghutfvre}\\
& \sum_{j=1}^{\omega_{k}} \lambda^{1}_{j} =1, \sum_{j=1}^{\omega_{k}} \lambda^{2}_{j} = 1 ~\text{and}~\lambda^{1}_{j}, \lambda^{2}_{j} \ge 0 ~\forall ~j \in [\omega_{k}],  \nonumber\\
&  \Delta_{-K}((\nabla f^{a^{k}_{j,r}}(x_{k})^{\top} s_{k} + \frac{1}{2}s_{k}^{\top} \nabla^{2} f^{a^{k}_{j,r}}(x_{k})s_{k})_{r \in [m]})-\tau_{1} \le 0 ~\forall ~j \in [\omega_{k}], \label{active_inactive _constraint_1}\\
& \Delta_{-K}((\nabla f^{a^{k}_{j,r}}(x_{k})^{\top} s_{k})_{r \in [m]})-\tau_{1} \le 0~\forall~ j \in [\omega_{k}], \label{active_inactive _constraint_2} \\
&\lambda^{1}_{j}(\Delta_{-K}((\nabla f^{a^{k}_{j,r}}(x_{k})^{\top}s_{k} + \frac{1}{2}s_{k}^{\top} \nabla^{2} f^{a^{k}_{j,r}}(x_{k})^{\top}s_{k})_{r \in [m]})-\tau_{1}) =0 \quad \forall~ j \in [\omega_{k}], \label{vbn_hjhj_jkj}\\
& \lambda^{2}_{j}(\Delta_{-K}((\nabla f^{a^{k}_{j,r}}(x_{k})^{\top}s_{k})_{r \in [m]})-\tau_{1})=0 ~\forall ~j \in [\omega_{k}], \label{nbh_ui_iop}\\
& \frac{\lVert s_{k} \rVert^{2}}{2} \le \frac{\Omega^{2}_{k}}{2}, ~\lambda^{3}\left(\frac{\lVert s_{k} \rVert^{2}}{2}- \frac{\Omega^{2}_{k}}{2}\right)=0, ~\lambda^{3} \ge 0.
\end{align}

Here, we have two cases based on if the trust-region constraints (\ref{active_inactive _constraint_1})  and (\ref{active_inactive _constraint_2}) are active. If active, the trust-region constraints in the Lagrangian function disappears. When inactive, i.e., $\lVert s_{k} \rVert < \Omega_{k}$, the same can be obtained using complementary conditions (\ref{vbn_hjhj_jkj}) and (\ref{nbh_ui_iop}), which reduces the Lagrangian to
\begin{align}\label{ytirc_oyrf}
    L((\tau_{1}, s_{k}),(\lambda^{1}, \lambda^{2})) & = \tau_{1} + \sum_{j=1}^{\omega_{k}} \lambda^{1}_{j}(\Delta_{-K}(\nabla f^{a^{k}_{j}}(x_{k})^{\top}s_{k} + \frac{1}{2} s_{k}^{\top}\nabla^{2} f^{a^{k}_{j}}(x_{k})s_{k})-\tau_{1})\nonumber\\&~~~~~~~+ \sum_{j =1}^{\omega_{k}}\lambda^{2}_{j} (\Delta_{-K}(\nabla  f^{a^{k}_{j}}(x_{k})^{\top}s_{k})- \tau_{1})) 
     = \tau_{1}. 
\end{align}
Next, from (\ref{jghutfvre}) and $ \lambda^{3} =0 $, we obtain, for all $j \in [\omega_{k}]$, that
 \begin{align}
\sum_{j=1}^{\omega_{k}} \lambda^{2}_{j} \Delta_{-K}((\nabla f^{a^{k}_{j,r}}(x_{k})^{\top}s_{k})_{r \in [m]}) =& -\sum_{j=1}^{\omega_{k}} \lambda^{1}_{j} ((\Delta_{-K}((\nabla f^{a^{k}_{j,r}}(x_{k})^{\top}s_{k})_{r\in [m]} \nonumber\\&+(s^{\top}_{k}\nabla^{2} f^{a^{k}_{j,r}}(x_{k})s_{k})_{r \in [m]})\label{hvcye_ugfx}.
 \end{align} 
Then, using (\ref{ytirc_oyrf}), (\ref{hvcye_ugfx}), and (\ref{tewyueui}), we have for all $j\in [\omega_{k}]$ that  
\begin{align}
   2 \tau_{1}= &\sum_{j=1}^{\omega_{k}} \lambda^{1}_{j} (\Delta_{-K}(\nabla f^{a^{k}_{j}}(x_{k})^{\top} s_{k})+ \frac{1}{2}(s_{k}^{\top}\nabla^{2}f^{a^{k}_{j,r}}(x_{k})s_{k})_{r\in [m]}) \nonumber\\
   &- \sum_{j=1}^{\omega_{k}} \lambda^{1}_{j}(\Delta_{-K}(\nabla f^{a^{k}_{j,r}}(x_{k})^{\top}s_{k}+ s_{k}^{\top} \nabla^{2} f^{a^{k}_{j,r}}(x_{k})s_{k})_{r \in [m]}) \nonumber\\
  \le & \sum_{j=1}^{\omega_{k}} \lambda^{1}_{j}\Delta_{-K}\left(\left(-\frac{1}{2} s_{k}^{\top}\nabla^{2}f^{a^{k}_{j,r}}(x_{k})s_{k}\right)_{r \in [m]}\right) \nonumber\\=& \frac{1}{2} \sum_{j=1}^{\omega_{k}} \lambda^{1}_{j} \Delta_{-K}\left(\left(- s_{k}^{\top}\nabla^{2}f^{a^{k}_{j,r}}(x_{k})s_{k}\right)_{r \in [m]}\right). 
\end{align}
Now, from Assumption \ref{boun_term}, we have 
\begin{align}
  & -(s_{k}^{\top} \nabla^{2}f^{a^{k}_{j}}(x_{k})s_{k})_{r \in [m]} \le -M \lVert s_{k} \rVert^{2} \nonumber ~\forall~ j \in [\omega_{k}]\\
 ~\text{and}~ & \Delta_{-K}(-(s_{k}^{\top} \nabla^{2}f^{a^{k}_{j}}(x_{k})s_{k})_{r \in [m]}) \le \Delta_{-K}(-M) \lVert s_{k} \rVert^{2} ~\forall~ j \in [\omega_{k}]\label{weyr_qui},
\end{align}
and using (\ref{weyr_qui}), we obtain 
\begin{align}\label{bbbhhhuuu_hygf}
   2 \tau_{1} \le \frac{1}{2} \sum_{j=1}^{\omega_{k}} \lambda^{1}_{j} \Delta_{-K}(-M) \lVert s_{k} \rVert^{2} =\frac{1}{2} \Delta_{-K}(-M)\lVert s_{k}\rVert^{2} <0. 
    \end{align}
Finally, taking $T = - \Delta_{-K}(-M)$, we have from (\ref{bbbhhhuuu_hygf}) that 
   \begin{align*}
    & -4 \tau_{1} \ge -\frac{1}{2} \Delta_{-K}(-M)\lVert s_{k} \rVert^{2} = \frac{1}{2} T\lVert s_{k}\rVert^{2} \\
   \implies & 4\lvert\tau_{1}\rvert \ge T \lVert s_{k} \rVert^{2}\\
   \implies & \lVert s_{k} \rVert^{2} \le \frac{4}{T}\lvert \tau_{1}\rvert.
   \end{align*}
This proves (\ref{hgjkg_arin}). 
\end{proof}

Next, we present an auxiliary lemma that will be useful for proving global convergence. \begin{lemma}\label{tyuio_pret} 
If $x_{k}+ s_{k} \in \Omega_{k}$, then for each $j \in [\omega_{k}]$, we have
\begin{align*}
    \lvert -\Delta_{-K}(f^{a^{k}_{j}}(x_{k}+s_{k})- 
    f^{a^{k}_{j}}(x_{k}) )+ \Delta_{-K}( m^{a^{k}_{j}}_{k}(s_{k}))\rvert \le \mathcal{K}\lVert s_{k}\rVert^{2} ~\text{for some}~ \mathcal{K} > 0.
\end{align*}
\end{lemma}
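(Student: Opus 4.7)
The plan is to express the quantity inside the absolute value as the image under $\Delta_{-K}$ of a vector difference between the actual increment $f^{a^k_j}(x_k+s_k)-f^{a^k_j}(x_k)$ and the quadratic model $m^{a^k_j}_k(s_k)$, and then invoke the Lipschitz continuity of $\Delta_{-K}$ with constant $1$ from Lemma \ref{proori}(\ref{aeruin}) to reduce the problem to a Euclidean norm estimate. First, I would rewrite
\[
-\Delta_{-K}\bigl(f^{a^k_j}(x_k+s_k)-f^{a^k_j}(x_k)\bigr)+\Delta_{-K}\bigl(m^{a^k_j}_k(s_k)\bigr)=\Delta_{-K}\bigl(m^{a^k_j}_k(s_k)\bigr)-\Delta_{-K}\bigl(f^{a^k_j}(x_k+s_k)-f^{a^k_j}(x_k)\bigr),
\]
and observe that by Lipschitz continuity this absolute value is bounded by
\[
\bigl\|m^{a^k_j}_k(s_k)-\bigl(f^{a^k_j}(x_k+s_k)-f^{a^k_j}(x_k)\bigr)\bigr\|.
\]

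Next, since each component $f^{a^k_j,r}\in C^2(\mathbb R^n,\mathbb R)$, Taylor's theorem with an integral/mean-value remainder gives, for every $r\in[m]$, some $\xi_{k,r}$ on the segment $[x_k,x_k+s_k]$ with
\[
f^{a^k_j,r}(x_k+s_k)-f^{a^k_j,r}(x_k)=\nabla f^{a^k_j,r}(x_k)^{\top}s_k+\tfrac{1}{2}s_k^{\top}\nabla^{2}f^{a^k_j,r}(\xi_{k,r})s_k.
\]
Subtracting the $r$-th component of $m^{a^k_j}_k(s_k)$ cancels the gradient term, leaving
\[
f^{a^k_j,r}(x_k+s_k)-f^{a^k_j,r}(x_k)-m^{a^k_j,r}_k(s_k)=\tfrac{1}{2}s_k^{\top}\bigl(\nabla^{2}f^{a^k_j,r}(\xi_{k,r})-\nabla^{2}f^{a^k_j,r}(x_k)\bigr)s_k.
\]
Under Assumption \ref{hess_bnd}, each Hessian has spectral norm at most $\mathcal{K}_1$, so the triangle inequality bounds the right-hand side in absolute value by $\mathcal{K}_1\lVert s_k\rVert^2$.

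Collecting these componentwise estimates, the Euclidean norm of the vector-valued discrepancy satisfies
\[
\bigl\|m^{a^k_j}_k(s_k)-\bigl(f^{a^k_j}(x_k+s_k)-f^{a^k_j}(x_k)\bigr)\bigr\|\le \sqrt{m}\,\mathcal{K}_1\lVert s_k\rVert^{2},
\]
so the bound follows with $\mathcal{K}:=\sqrt{m}\,\mathcal{K}_1$. I do not anticipate any real obstacle: the argument is a routine blend of Taylor expansion and $1$-Lipschitzness of $\Delta_{-K}$. The only subtlety worth checking is the vector/component bookkeeping — making sure that the mean-value points $\xi_{k,r}$ are allowed to differ across components (which is why we pass through the coordinatewise Taylor formula rather than a single vectorial one) and that the matrix/vector notation used for $s^{\top}\nabla^{2}f^{i}(x)s$ in Assumption \ref{boun_term} is interpreted consistently.
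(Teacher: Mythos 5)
Your proposal is correct and follows essentially the same route as the paper's proof: Taylor expansion of $f^{a^k_j}$ at $x_k$, cancellation of the gradient term against the model, the $1$-Lipschitz property of $\Delta_{-K}$ from Lemma \ref{proori}(\ref{aeruin}), and the uniform Hessian bound to get the $\mathcal{K}\lVert s_k\rVert^2$ estimate. If anything, your componentwise mean-value form of the remainder and the explicit constant $\mathcal{K}=\sqrt{m}\,\mathcal{K}_1$ are tighter than the paper's somewhat loose $o(\lVert s_k\rVert^2)$ bookkeeping.
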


\begin{proof} 
For each $j \in [\omega_{k}]$, as $f^{{a}^{k}_{j}}$ is twice continuously differentiable (Assumption \ref{hess_bnd}), we get 
\begin{align}\label{ytehf_1}
    f^{{a}^{k}_{j}}(x_{k}+s_{k}) = f^{{a}^{k}_{j}}(x_{k}) + \nabla f^{{a}^{k}_{j}}(x_{k})^{\top}s_{k} + o(\|s_{k}\|^2). 
\end{align}
Using (\ref{ytehf_1}) and $m^{a^{k}_{j}}$ given in (3.7) of \cite{trust2025setopt}, we obtain
\begin{align}
  & \lvert -\Delta_{-K}(f^{a^{k}_{j}}(x_{k}+s_{k})- 
    f^{a^{k}_{j}}(x_{k}) )+ \Delta_{-K}( m^{a^{k}_{j}}_{k}(s_{k}))\rvert\\
      = & \lvert -\Delta_{-K}( 
  f^{a^{k}_{j}}(x_{k}+s_{k}))-
  f^{a^{k}_{j}}(x_{k}))+ \Delta_{-K}(-m^{a^{k}_{j}}_{k}(0)+  m^{a^{k}_{j}}_{k}(s_{k})) \rvert \nonumber \\
   = ~& \bigg \lvert-\Delta_{-K}(\nabla f^{a^{k}_{j}}(x_{k})^{\top}s_{k}+ o(\|s_{k}\|^{2})+\Delta_{-K}(\nabla f^{a^{k}_{j}}(x_{k})^{\top} s_{k} +  \tfrac{1}{2}s_{k}^{\top}\nabla^{2}f^{{a}^{k}_{j}}(x_{k})s_{k})\bigg\rvert  \nonumber\\
  \le~ &\left \lVert (\nabla f^{a^{k}_{j}}(x_{k})^{\top}s_{k}+\tfrac{1}{2}s_{k}^{\top}\nabla^{2}f^{{a}^{k}_{j}}(x_{k})s_{k}) -(\nabla f^{a^{k}_{j}}(x_{k})^{\top} s_{k} 
   + o( \lVert s_{k} \rVert^{2} )\right\rVert \nonumber\\
   =~& \mathcal{K}\|s_{k}\|^{2}\label{qwtyu_opf} \text{ for some } \mathcal{K} > 0, ~\text{since}~ s_{k} \in \mathcal{B}_{k}.
\end{align}
This concludes the proof.
\end{proof}

Next, using Lemma \ref{WeY_rjt}, we present Corollary \ref{utrw_kkjx}.
\begin{cor}\label{utrw_kkjx} 
Suppose that all the hypotheses of Theorem \ref{fsgcsb}, Lemma \ref{WeY_rjt} and Corollary \ref{tncdfd} hold. Then, there exists a positive constant $\beta$ such that, for all $j \in [\omega_{k}]$,
\begin{align}\label{gghgftfd_gugyfh}
 \Delta_{-K}(m^{{a}^{k}_{j}}_{k}(0)- m^{{a}^{k}_{j}}_{k}\left(s_{k}\right))  \ge \frac{\beta}{2} \frac{ \lvert \theta(x_{k})\rvert}{\lVert s_{k} \rVert}\min\left\{ \frac{\lvert \theta(x_{k}) \rvert}{\lVert s_{k}\rVert \mathcal{K}_{1}}, \Omega_{k} \right\}.     
\end{align}
\end{cor}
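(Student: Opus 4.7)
The plan is to obtain the bound by chaining the inequality of Corollary~\ref{tncdfd} with the componentwise gradient bound from Lemma~\ref{WeY_rjt}. Corollary~\ref{tncdfd} already delivers, for each $j \in [\omega_k]$,
\[
\Delta_{-K}\!\left(m^{a^k_j}_k(0) - m^{a^k_j}_k(s_k)\right) \ \ge \ -\tfrac{\beta}{2}\,\frac{\Delta_{-K}(\nabla f^{a^k_j}(x_k)^{\top}s_k)}{\lVert s_k\rVert}\,\min\!\left\{-\frac{\Delta_{-K}(\nabla f^{a^k_j}(x_k)^{\top}s_k)}{\lVert s_k\rVert\,\mathcal{K}_1},\ \Omega_k\right\},
\]
so the task reduces to replacing the quantity $-\Delta_{-K}(\nabla f^{a^k_j}(x_k)^{\top}s_k)$ on the right-hand side by $\lvert\theta(x_k)\rvert$ while preserving the inequality.

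Next I would invoke Lemma~\ref{WeY_rjt}, which asserts $\max_{j\in[\omega_k]}\Delta_{-K}(\nabla f^{a^k_j}(x_k)^{\top}s_k) \le -\lvert\theta(x_k)\rvert$. Taking the max on the left immediately gives, for every fixed $j$,
\[
-\Delta_{-K}(\nabla f^{a^k_j}(x_k)^{\top}s_k) \ \ge \ \lvert\theta(x_k)\rvert \ >\ 0,
\]
where positivity is ensured because $x_k$ is not a $K$-critical point (so $\theta(x_k)\neq 0$ by Theorem~\ref{critopti}). Denoting $a_j := -\Delta_{-K}(\nabla f^{a^k_j}(x_k)^{\top}s_k) \ge \lvert\theta(x_k)\rvert$, the Corollary~\ref{tncdfd} bound reads
\[
\Delta_{-K}\!\left(m^{a^k_j}_k(0) - m^{a^k_j}_k(s_k)\right) \ \ge \ \tfrac{\beta}{2}\,\frac{a_j}{\lVert s_k\rVert}\,\min\!\left\{\frac{a_j}{\lVert s_k\rVert\,\mathcal{K}_1},\ \Omega_k\right\}.
\]

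To conclude, I would observe that the map $a\mapsto \tfrac{a}{\lVert s_k\rVert}\min\{\tfrac{a}{\lVert s_k\rVert\mathcal{K}_1},\Omega_k\}$ is nondecreasing on $[0,\infty)$: both factors in the product are nondecreasing and nonnegative in $a$. Therefore substituting the lower value $\lvert\theta(x_k)\rvert$ for $a_j$ preserves the lower bound, yielding
\[
\Delta_{-K}\!\left(m^{a^k_j}_k(0) - m^{a^k_j}_k(s_k)\right) \ \ge \ \tfrac{\beta}{2}\,\frac{\lvert\theta(x_k)\rvert}{\lVert s_k\rVert}\,\min\!\left\{\frac{\lvert\theta(x_k)\rvert}{\lVert s_k\rVert\,\mathcal{K}_1},\ \Omega_k\right\},
\]
which is precisely the claimed inequality. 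There is no real obstacle here; the only subtlety worth flagging is the monotonicity argument in the last step, which justifies the substitution inside the $\min$ as well as in the prefactor, so that a single constant $\beta$ from Corollary~\ref{tncdfd} serves in the final estimate.
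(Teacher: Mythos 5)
Your proposal is correct and follows essentially the same route as the paper: apply Lemma \ref{WeY_rjt} to get $-\Delta_{-K}(\nabla f^{a^k_j}(x_k)^{\top}s_k) \ge \lvert\theta(x_k)\rvert$ for each $j$, then substitute into the bound of Corollary \ref{tncdfd}. The only difference is that you explicitly justify the substitution via the monotonicity of $a\mapsto \tfrac{a}{\lVert s_k\rVert}\min\{\tfrac{a}{\lVert s_k\rVert\mathcal{K}_1},\Omega_k\}$, a step the paper leaves implicit.
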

\begin{proof} 
From (\ref{ufxvvnn}) of Lemma \ref{WeY_rjt}, we have 
\begin{align}\label{hhjgjhg_b}
 &\Delta_{-K}(f^{a^{k}_{j}}(x_k))^{\top} s_{k})  \le - \lvert \theta(x_{k}) \rvert  ~\forall~ j \in [\omega_{k}] \nonumber\\ 
\implies &-\Delta_{-K}(f^{a^{k}_{j}}(x_k))^{\top} s_{k}) \ge  \lvert \theta(x_{k}) \rvert   ~\forall~ j \in [\omega_{k}]. 
\end{align}
Using Corollary \ref{tncdfd} and (\ref{hhjgjhg_b}), we obtain (\ref{gghgftfd_gugyfh}).
\end{proof}

Next, we show that the sequence of function values generated by the two algorithms, $ F(x_{l(k)})$ for Max-NTRM and $\{(C^{i,r}_{k})_{r \in [m]}\}$ for Avg-NTRM, are monotonically decreasing. 


\begin{theorem}\label{hfrhnye_uyhio}
Let $\{x_{k}\}$ be a sequence generated by Algorithm \ref{alg_max} (Max-NTRM). Then, $\{(f^{i,r}(x_{l^{i,r}(k)}))_{r \in [m]}\}$, for all $i\in [p]$, is a non-increasing sequence, i.e., for all $k$,
\begin{align}\label{gercyuk_ob}
(f^{i,r}(x_{l^{i,r}(k+1)}))_{r \in [m]} \preceq_{K} (f^{i,r}(x_{l^{i,r}(k)}))_{r \in [m]}. 
\end{align}
\end{theorem}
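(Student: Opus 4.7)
I plan to establish the claim component-wise in $r\in[m]$ and then assemble the scalar bounds into the vector inequality under $\preceq_K$. The overall strategy is a case analysis based on (i) the success status of the $k$-th step and (ii) whether the constant-$a$ switch in \eqref{ghvt_ugh_huh} is triggered at iterations $k$ and $k+1$.

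If the $k$-th step is unsuccessful, then $x_{k+1}=x_k$; because $N_{k+1}=\min\{N_k+1,\hat N\}$, the window of past iterates at $k+1$ produces no new function values beyond those already present at $k$, so the maximum can only stay the same or shrink and the claim is immediate. If the step is successful, then $x_{k+1}=x_k+s_k$ and $\rho^{a^k_j}_k\ge\eta_1>0$ for every $j\in[\omega_k]$, which by Proposition~\ref{descveri} gives
\[
f^{a^k_j}(x_{k+1})\prec_K \bigl(f^{a^k_j,r}(x_{l^{j,r}(k)})\bigr)_{r\in[m]}.
\]
When the constant-$a$ condition holds at iteration $k+1$, the recursion for $N_k$ forces it to hold at $k$ as well. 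Separating the newest term from the rest of the maximum then yields
\[
f^{i,r}(x_{l^{i,r}(k+1)})\le \max\bigl\{f^{i,r}(x_{k+1}),\,f^{i,r}(x_{l^{i,r}(k)})\bigr\},
\]
where the estimate $N_{k+1}-1\le N_k$ on the old-iterate window is used. The step-acceptance bound above, together with Lemma~\ref{proori}(\ref{htytuttd}), will then dominate $f^{i,r}(x_{k+1})$ by $f^{i,r}(x_{l^{i,r}(k)})$ for each active index, closing the case. The complementary branch, in which the constant-$a$ condition breaks at $k+1$, reduces to $f^{i,r}(x_{l^{i,r}(k+1)})=f^{i,r}(x_{k+1})$, and is handled by the same acceptance bound.

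The main obstacle I anticipate is propagating the strict $K$-decrease from Proposition~\ref{descveri}, which only names the active indices $a^k_j$, to every $i\in[p]$ appearing in the statement. The resolution is to note that the maximum in \eqref{ghvt_ugh_huh} is only assembled over iterations in which $a^{k-q}$ remains constant; for indices $i$ outside that stable active set, the definition falls back to $f^{i,r}(x_k)$ and the comparison does not require invoking Proposition~\ref{descveri} at all. The subtle part is cleanly translating the cone ordering $\prec_K$ given by the acceptance criterion into the coordinatewise inequalities demanded by the max operation, and then lifting the coordinatewise conclusions back to the stated $\preceq_K$ inequality; this interplay between cone-valued descent and a componentwise maximum history is, to my mind, the technical crux of the argument.
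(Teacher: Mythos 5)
Your skeleton matches the paper's: you invoke the step-acceptance criterion (Proposition~\ref{descveri}) to get $f^{a^k_j}(x_{k+1})\prec_K (f^{a^k_j,r}(x_{l^{j,r}(k)}))_{r\in[m]}$ at the active indices, you case-split on whether the constant-$a$ condition in \eqref{ghvt_ugh_huh} holds, and you split the window at $k+1$ via $N_{k+1}\le N_k+1$ to obtain $f^{i,r}(x_{l^{i,r}(k+1)})\le\max\{f^{i,r}(x_{k+1}),f^{i,r}(x_{l^{i,r}(k)})\}$ — all of which is exactly the paper's route (the paper simply declares the sequence to consist of successful iterates, whereas you treat the unsuccessful case explicitly, which is fine).

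There is, however, a genuine gap at the step you yourself flag as the crux: passing from the active indices $a^k_j$ to every $i\in[p]$. Your proposed resolution misreads \eqref{ghvt_ugh_huh}. The ``otherwise'' branch there is triggered when the selected tuple $a^{k-q}$ fails to be constant over the window, not when the index $i$ lies outside the active set; when the $a$'s \emph{are} constant, the componentwise maximum is taken for \emph{every} $i\in[p]$, including indices that never appear in any $a^k$. For such an $i$ you have neither Proposition~\ref{descveri} (which only controls $f^{a^k_j}(x_{k+1})$) nor the fallback $f^{i,r}(x_k)$, so nothing in your argument dominates $f^{i,r}(x_{k+1})$ by $f^{i,r}(x_{l^{i,r}(k)})$. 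The paper closes this hole with the domination property of $K$-minimal elements (Proposition 2.1 of \cite{steepmethset}), i.e.\ $F(x)\subseteq\{f^{a_j}(x)\}_{j}+K$, which lets it chain set inclusions $F(x_{l(k)})\subseteq\{f^{a^k_j}(x_{l(k)})\}_j+K\subseteq\{f^{a^k_j}(x_{k+1})\}_j+K\subseteq F(x_{k+1})+K$ in \eqref{ghggh_first_case}--\eqref{ghggh_second_case} and only then extract the per-index inequality \eqref{chyon_soup}; your proposal contains no substitute for this. A secondary caution: you announce that you will prove scalar inequalities in each coordinate $r$ and then ``assemble'' them into $\preceq_K$; for a general convex pointed solid cone $K$ (not containing $\mathbb{R}^m_+$) coordinatewise bounds do not imply $\preceq_K$, so that assembly step also needs justification or a restriction on $K$.
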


\begin{proof}
 From the definition of $(f^{i,r}(x_{l^{i,r}(k)}))_{r \in [m]}$, we have
\begin{align}\label{jugr_uior}
(f^{i,r}(x_{k}))_{r \in [m]} \preceq_{K} (f^{i,r}(x_{l^{i,r}(k)}))_{r \in [m]}~\text{for all}~ i\in [p].
\end{align}  
For Algorithm \ref{alg_max}, the generated sequence $\{x_{k}\}$ contains only successful iterates.
 For $x_{k}$ to be a successful iterate, we must have, from Step \ref{getwqs_max} of Algorithm \ref{alg_max}, that
\begin{align}\label{pwfhsteui}
-\frac{\Delta_{-K}\left(f^{a^{k}_{j}}(x_{k}+s_{k})-(f^{a^{k}_{j},{r}}(x_{l^{j,r}(k)}))_{r \in [m]}\right)}{\Delta_{-K}(m^{a^{k}_{j}}(0)-m^{a^{k}_{j}}(s_{k}))} \ge \eta_{1} \ge 0~\text{for all}~j \in [\omega_{k}]. 
\end{align}
Applying property (\ref{fyf}) of Lemma \ref{proori} to the inequality (\ref{pwfhsteui}), we obtain for the case when $a^{k}=a^{k'}$ with $k> k'\ge k-N_k$ that
\begin{align}\label{lijk_first_case}
 f^{a^{k}_{j}}(x_{k+1})  \preceq_{K} \max_{0< q< N_k}(f^{a^{k}_{j},{r}}(x_{k-q}))_{r \in [m]} = (f^{a^{k}_{j},r}(x_{l^{j,r}(k)}))_{r \in [m]}
\end{align}
for all $j \in [\omega_{k}]$. For the case when $a^{k}\neq a^{k'}$ for some $k>k'\ge k-N_k$, we have from (\ref{jugr_uior}), for all $j \in [\omega_{k}]$, that 
\begin{align}\label{lijk_second_case}
    f^{a^{k}_{j}}(x_{k+1}) \preceq_{k}   f^{a^{k}_{j}}(x_{k})= (f^{a^{k}_{j},{r}}(x_{l^{j,r}(k)}))_{r \in [m]}. 
\end{align}
Thus, from (\ref{lijk_first_case}), we have 
\begin{align}\label{ghggh_first_case}
F(x_{l(k)}) & = \{ (f^{i,r}(x_{l^{i,r}(k)}))_{r \in [m]}\}_{i \in [p]} \nonumber\\
& \subseteq \{(f^{a^{l^{j,r}(k)}}(x_{l^{j,r}(k)}))_{r \in [m]}\}_{j \in [\omega_k]} + K ~\text{by Proposition 2.1 in \cite{steepmethset}} \nonumber\\
& =\{(f^{a^k_{j}}(x_{l^{j,r}(k)}))_{r \in [m]}\}_{j \in [\omega_k]} + K \nonumber \\
&\overset{(\ref{lijk_first_case})}{\subseteq}
\{(f^{a^{k}_{j},{r}}(x_{k+1}))_{r \in [m]}\}_{j \in [\omega_k]} + K  \nonumber\\&\subseteq \{(f^{i,r}(x_{k+1}))_{r\in [m]}\}_{i\in [p]}+K \nonumber \\
&= F(x_{k+1})+K.
\end{align}
Similarly, from (\ref{lijk_second_case}),
we have 
\begin{align}\label{ghggh_second_case}
 F(x_{l(k)}) = & \{ (f^{i,r}(x_{l^{i,r}(k)}))_{r \in [m]}\}_{i \in [p]} \nonumber\\ 
 \overset{(\ref{jugr_uior})}{\subseteq} & \{ (f^{i,r}(x_{k}))_{r \in [m]}\}_{i \in [p]} + K \nonumber\\
 \subseteq & ~\{ (f^{a^{k}_{j},{r}}(x_{k}) )_{r \in [m]}\}_{j \in [\omega_{k}]} + K~\text{by Proposition 2.1 in \cite{steepmethset}} \nonumber\\
 \subseteq &~\{(f^{a^{k}_{j},{r}}(x_{k+1}))_{r \in [m]}\}_{j \in [\omega_{k}]} + K \nonumber\\
 \subseteq & ~\{(f^{i,r}(x_{k+1}))_{r\in [m]}\}_{i\in [p]}+K \nonumber\\
 =& ~F(x_{k+1})+ K.
 \end{align}
Combining (\ref{ghggh_first_case}) and (\ref{ghggh_second_case}), we have 
\begin{align}\label{chyon_soup}
   (f^{i,r}(x_{k+1}))_{r \in [m]} \preceq_{K} f^{i,r}(x_{l^{i,r}(k)})_{r \in [m]}~\text{for all}~ i \in [p].
\end{align}
Now, using (\ref{chyon_soup}) and $N_{k+1}= \min\{N_k+1,\hat{N}\}$, we have for all $i \in [p]$ that
\begin{align}\label{hghgg_ftdff}
 (f^{i,r}(x_{l^{i,r}(k+1)}))_{r \in [m]}& = \left(\max_{0 \le b \le m(k+1)} f^{i,r}(x_{k+1-b}) \right)_{r \in [m]} \nonumber\\
    & \preceq_{K} \left(\max_{0 \le b \le m(k)+1} f^{i,r}(x_{k+1-b})\right)_{r \in [m]} \nonumber\\
    & =\left (\max_{0 \le b-1 \le m(k)} \{f^{i,r}(x_{k+1})), f^{i,r}(x_{k-(b-1)})\} \right)_{r \in [m]} \nonumber\\ 
    & = \left (\max\{ f^{i,r}(x_{k+1})), \max_{0 \le b \le m(k)} f^{i,r}(x_{k-b})\}\right)_{r \in [m]} \nonumber\\
    & = \left(\max \{ f^{i,r}(x_{k+1})), f^{i,r}(x_{l^{i,r}(k)})\}\right)_{r \in [m]} \nonumber\\
    & \overset{(\ref{chyon_soup})}{\preceq_{K}} (f^{{i,r}}(x_{l^{i,r}(k)}))_{r \in [m]}.
\end{align}
Finally, using (\ref{hghgg_ftdff}), we get 
\begin{align*}
    F(x_{l(k)}) = \{(f^{{i,r}}(x_{l^{i,r}(k)}))_{r \in [m]} \}_{i \in [p]} \subseteq  \{(f^{i,r}(x_{l^{i,r}(k+1)}))_{r \in [m]} \}_{i \in [p]} + K \subseteq F(x_{l(k+1)}) + K.
\end{align*}
Therefore, $ F(x_{l(k+1)}) \preceq^{l}_{K} F(x_{l(k)})$ for all $k$ and  $\{ F(x_{l(k)})\}$ is a non-increasing sequence.
\end{proof}
From Theorem \ref{hfrhnye_uyhio}, Algorithm \ref{alg_max} and Algorithm \ref{alg_avg} generates the sequence $\{x_{k}\}$ in such a way that $ F(x_{l(k+1)}) \preceq^{l}_{K} F(x_{l(k)})$, where $ F(x_{l(k)})$ is defined as
\begin{align}
 F(x_{l(k)}) = \left\{f^{i}(x_{l^{i}(k)})\right\}_{i \in [p]} &= \left\{\left(f^{{i,r}}(x_{l^{i,r}(k)})\right)_{r \in [m]}\right\}_{i \in [p]} \nonumber\\
 & = \left\{\left(f^{{i,1}}(x_{l^{i,1}(k)}),  f^{{i,2}}(x_{l^{i,2}(k)}), \ldots, f^{{i,m}}(x_{l^{i,m}(k)})\right)\right\}_{i \in [p]}.
\end{align} Similarly, the inequality 
$ \{ (C^{i,r}_{k+1})_{r \in [m]}\}_{i \in [p]} \preceq^{l}_{K} \{ (C^{i,r}_{k})_{r \in [m]}\}_{i \in [p]}$ also holds.  Therefore, Algorithm \ref{alg_max} and  Algorithm \ref{alg_avg} are well-defined.
\begin{theorem}\label{hfrhnye_uyhio_avg}
Let $\{x_{k}\}$ be a sequence generated by Algorithm \ref{alg_avg} (Avg-NTRM). Then,  $\{(C^{i,r}_{k})_{r \in [m]}\}$,  for all $i\in [p]$, is a non-increasing sequence, i.e., for all $k$,
\begin{align}\label{truhg_wdd}
(f^{i,r}(x_{k+1}))_{r \in [m]} \preceq_{K}  (C^{i,r}_{k+1})_{r \in [m]}  \preceq_{K}(C^{i,r}_{k})_{r \in [m]}. 
\end{align}
\end{theorem}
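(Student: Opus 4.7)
The plan is to prove both inequalities in
\[
(f^{i,r}(x_{k+1}))_{r \in [m]} \preceq_K (C^{i,r}_{k+1})_{r \in [m]} \preceq_K (C^{i,r}_k)_{r \in [m]}
\]
simultaneously by induction on $k$, paralleling the proof of Theorem \ref{hfrhnye_uyhio} for Max-NTRM. The key observation is that, in the recursive branch of the definition of $C^{i,r}_{k+1}$, the two weights $\tfrac{\mu_k q_k}{q_{k+1}}$ and $\tfrac{1}{q_{k+1}}$ sum to $1$ and are non-negative, so $(C^{i,r}_{k+1})_{r \in [m]}$ is a convex combination of $(C^{i,r}_k)_{r \in [m]}$ and $(f^{i,r}(x_{k+1}))_{r \in [m]}$. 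Consequently, both inequalities in the target chain reduce to the single pointwise bound
\[
(\ast)\qquad (f^{i,r}(x_{k+1}))_{r \in [m]} \preceq_K (C^{i,r}_k)_{r \in [m]} \quad \text{for every } i \in [p].
\]

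First I would establish ($\ast$). For the active indices $i = a^k_j$ with $j \in [\omega_k]$, step acceptance yields $\rho^{a^k_j}_k \geq \eta_1 > 0$, and Proposition \ref{descveri} directly gives $f^{a^k_j}(x_{k+1}) \prec_K (C^{a^k_j,r}_k)_{r \in [m]}$. For the remaining indices $i \in [p] \setminus \{a^k_j\}_{j \in [\omega_k]}$, I would replicate the set-inclusion chain of (\ref{ghggh_first_case})--(\ref{ghggh_second_case}) from the Max-NTRM proof: invoke Proposition 2.1 of \cite{steepmethset} to bound the relevant finite set in $\mathbb{R}^m$ by its $K$-minimal part indexed through $a^k$, and combine this with the inductive hypothesis $(f^{i,r}(x_k))_{r \in [m]} \preceq_K (C^{i,r}_k)_{r \in [m]}$ to propagate the bound componentwise across all $i$.

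With ($\ast$) in hand, the two desired inequalities will follow from the elementary identities
\begin{align*}
(C^{i,r}_{k+1})_{r \in [m]} - (f^{i,r}(x_{k+1}))_{r \in [m]} &= \tfrac{\mu_k q_k}{q_{k+1}}\,\big((C^{i,r}_k)_{r \in [m]} - (f^{i,r}(x_{k+1}))_{r \in [m]}\big), \\
(C^{i,r}_{k+1})_{r \in [m]} - (C^{i,r}_k)_{r \in [m]} &= \tfrac{1}{q_{k+1}}\,\big((f^{i,r}(x_{k+1}))_{r \in [m]} - (C^{i,r}_k)_{r \in [m]}\big),
\end{align*}
together with the fact that $K$ is a convex cone and the scalar weights are non-negative. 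In the "otherwise" branch of the recursion, $C^{i,r}_{k+1} = f^{i,r}(x_{k+1})$ makes the first inequality an equality, while the second becomes exactly ($\ast$). The base case $k = 0$ is immediate since $C^{i,r}_0 = f^{i,r}(x_0)$, which makes both inequalities trivially hold.

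The main obstacle will be the extension of ($\ast$) from the active indices $\{a^k_j\}_{j \in [\omega_k]}$ to all of $[p]$, because step acceptance only directly controls the active coordinates. The Max-NTRM proof circumvents this via a careful chain of set inclusions blending $F$-values at past and current iterates, and the analogous manoeuvre for Avg-NTRM must faithfully respect the componentwise structure of $(C^{i,r}_k)_{r \in [m]}$, which aggregates history from several iterates rather than a single maximum. A secondary delicate point is the case split in the recursive definition: whenever the chain $a^{k+1} = a^k = \cdots = a^0$ first fails, $C^{i,r}_{k+1}$ is reset to $f^{i,r}(x_{k+1})$, and one must check that the target chain remains intact under this "cold-start"---which once more reduces to verifying ($\ast$) at iteration $k$.
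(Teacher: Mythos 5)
Your reduction of both inequalities in (\ref{truhg_wdd}) to the single bound $(\ast)$, via the convex-combination identities for $C^{i,r}_{k+1}$, is exactly the paper's mechanism (its displays (\ref{gcgncd}) and (\ref{wqtyrer_uffharin})), and your treatment of $(\ast)$ on successful iterations --- step acceptance $\rho^{a^{k}_{j}}_{k}\ge\eta_{1}>0$ plus Proposition \ref{descveri} for the active indices, then the set-inclusion chain in the style of (\ref{ghggh_first_case})--(\ref{ghggh_second_case}) to cover all $i\in[p]$ --- is likewise the paper's argument for $k\in I_{1}$. So on successful iterations the two proofs coincide.

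The genuine gap is that you never address \emph{unsuccessful} iterations. Unlike Max-NTRM, where only successful iterates enter the analysis, in Avg-NTRM the average $C^{i,r}_{k}$ is updated at every iteration, including those where $\rho^{a^{k}_{l}}_{k}<\eta_{1}$ for some $l$; there Proposition \ref{descveri} gives you nothing (indeed $\rho>0$ may fail), so your route to $(\ast)$ via step acceptance breaks down. The paper spends the second half of its proof precisely on this case ($k\in I_{2}$), splitting on whether $k-1$ was successful and, when it was not, tracking the weighted average backward through a run of unsuccessful iterations to the last successful one (or to $x_{0}$). That said, your inductive framing repairs this almost for free and arguably more cleanly than the paper does: on an unsuccessful iteration $x_{k+1}=x_{k}$, so $(f^{i,r}(x_{k+1}))_{r\in[m]}=(f^{i,r}(x_{k}))_{r\in[m]}\preceq_{K}(C^{i,r}_{k})_{r\in[m]}$ is exactly the first inequality of the induction hypothesis at step $k-1$, which is $(\ast)$ at step $k$; the convex-combination identities then finish as before. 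You should state this case explicitly --- as written, the proposal silently assumes every iteration is accepted, which is false for Algorithm \ref{alg_avg}.
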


\begin{proof}
For Avg-NTRM, $C^{a^{k}_{j}}$ gets updated even for an unsuccessful iterate. Therefore, for Avg-NTRM, we first divide the sequence of iterations into two sets: 
\begin{align}\label{gutpyr_utdr}
    I_{1} := \{k : \rho^{a^{k}_{j}} \ge \eta_{1} ~\text{for all}~ j \in [\omega_{k}] \} ~\text{and}~ I_{2} := \{ k:  \exists l \in [\omega_{k}]~\text{such that}~ \rho^{a^{k}_{l}} < \eta_{1}\}.
\end{align}
$I_{1}$ and $I_{2}$ represent the set of indices of successful and unsuccessful iterations, respectively. Then, for $k \in I_{1}$, from the definition of $\rho^{a^{k}_{j}}_{k}$, Corollary \ref{utrw_kkjx}, and the fact that $\rho^{a^{k}_{j}}_{k} \ge \eta_{1}$ for each $j \in [\omega_{k}]$, we have
\begin{align}\label{hff_uydfv}
&  -\Delta_{-K}(f^{a^{k}_{j}}(x_{k}+s_{k}) - (C^{a^{k}_{j},{r}}_{k})_{r \in [m]}) \ge \eta_{1}\frac{\beta}{2}\frac{\lvert \theta(x_{k})\rvert}{\lVert s_{k} \rVert}\min\left\{ \frac{\lvert \theta(x_{k}) \rvert}{\lVert s_{k}\rVert \mathcal{K}_{1}}, \Omega_{k} \right\} > 0.
\end{align}
Applying property (\ref{fyf}) of Lemma \ref{proori} to (\ref{hff_uydfv}), for the case when $a^{k}=a^{k'} $ for all $k'$ such that $k>k'\ge 0$, we have
\begin{align}\label{cijk_first_case}
(f^{a^{k}_{j},{r}}(x_{k+1}))_{r \in [m]} \prec_{K} \frac{\mu_{k-1}q_{k-1}}{q_{k}}(C^{a^{k}_{j},{r}}_{k-1})_{r \in [m]} + \frac{1}{q_{k}} (f^{a^{k}_{j,r}}(x_{k}))_{r \in [m]} = (C^{a^{k}_{j},{r}}_{k})_{r\in [m]}. 
\end{align}
For the case when $a^{k} \neq a^{k'}$ for any $k'$ such that $k>k'\ge 0$, we have
\begin{align}\label{cijk_second_case}
    (f^{a^{k}_{j},{r}}(x_{k+1}))_{r \in [m]} \prec_{K} (f^{a^{k}_{j
},{r}}(x_{k}))_{r \in [m]} = (C^{a^{k}_{j},{r}}_{k})_{r \in [m]}.
\end{align}
From (\ref{cijk_first_case}) and (\ref{cijk_second_case}), we have  
\begin{align}\label{hjh_uhv_g}
     \{(C^{i,r}_{k})_{r \in [m]} \}_{i \in [p]} 
  & \subset \{(C^{a^{k}_{j},{r}}_{k})_{r \in [m]} \}_{j \in [\omega_{k}]} \nonumber \\
  & \subset   
  \{ (f^{a^{k}_{j},{r}}(x_{k+1}))_{r \in [m]} \}_{i \in [p]} + K \nonumber\\
  & \subset  F(x_{k+1}) + K.
\end{align}
Thus, from (\ref{hjh_uhv_g}), we have for all $i \in [p]$ that
\begin{align}\label{yhhfop_ugv}
   (f^{i,r}(x_{k+1}))_{r \in [m]} \prec_{K} (C^{i,r}_{k})_{r \in [m]}. 
\end{align}
Next, from (\ref{tbddv_uiet}) of $(C^{i,r}_{k})_{r \in [m]}$, for all $i \in [p]$, we can write
\begin{align}\label{gcgncd}
(C^{i,r}_{k})_{r \in [m]}-(C^{i,r}_{k+1})_{r \in [m]} &= (C^{i,r}_{k})_{r \in [m]}-\left(\frac{\mu_{k} q_{k}}{q_{k+1}} (C^{i,r}_{k})_{r \in [m]} + \frac{1}{q_{k+1}} (f^{i,r}(x_{k+1}))_{r \in [m]}\right) \nonumber\\
& = \frac{1}{q_{k+1}}((C^{i,r}_{k})_{r \in [m]}-(f^{i,r}(x_{k+1}))_{r \in [m]}) \in \text{int}K
\end{align}
and 
\begin{align}\label{wqtyrer_uffharin}
(C^{i,r}_{k+1})_{r \in [m]}-(f^{i,r}(x_{k+1}))_{r \in [m]} &= \left(\frac{\mu_{k} q_{k}}{q_{k+1}} (C^{i,r}_{k})_{r \in [m]} + \frac{1}{q_{k+1}} (f^{i,r}(x_{k+1}))_{r \in [m]}\right) \nonumber\\&\qquad-(f^{i,r}(x_{k+1}))_{r \in [m]} \nonumber\\
& = \frac{\mu_{k}q_{k}}{q_{k+1}}((C^{i,r}_{k})_{r \in [m]}-(f^{i,r}(x_{k+1}))_{r \in [m]}) \in \text{int}K.
\end{align}
Combining (\ref{gcgncd}) and (\ref{wqtyrer_uffharin}), we get   
\begin{align}\label{uuvpyt_igfd}
 F(x_{k+1}) = \{(f^{i,r}(x_{k+1}))_{r \in [m]} \} _{i \in [p]} \prec^{l}_{K}  \{(C^{i,r}_{k+1})_{r \in [m]} \}_{i \in [p]} \prec^{l}_{K}\{ (C^{i,r}_{k})_{r \in [m]}\}_{i \in [p]}.
\end{align} 
Also note that if $\mu_{k} = 0$, from Definition \ref{tbddv_uiet} of $(C^{i,r}_{k})_{r \in [m]}$, we have the equality
\begin{align}\label{gutree}
F(x_{k+1}) =\{(f^{i,r}_{k+1})_{r \in [m]}\}_{i \in [p]} = 
\{(C^{i,r}_{k+1})_{r \in [m]}\}_{i\in [p] }.
\end{align}
Therefore, for $\mu_{k} \ge 0$ and $q_{k}\ge 1$, by assembling (\ref{uuvpyt_igfd}) and (\ref{gutree}) and applying property (\ref{htytuttd}) of Lemma \ref{proori}, we get
\begin{align}\label{efbtp}
F(x_{k+1}) =  \{ (f^{i,r}_{k+1})_{r \in [m]})\}_{i \in [p]} \preceq^{l}_{K} \{(C^{i,r}_{k+1})_{r \in [m]}\}_{i \in [p]}\preceq^{l}_{K}  \{(C^{i,r}_{k})_{r \in [m]}\}_{i \in [p]}. 
\end{align}

Next, we consider the case of unsuccessful iteration, i.e., $k \in I_{2}$. Here, since $x_{k+1} = x_{k}$, we have $(f^{a^{k}_{j},{r}}_{k+1})_{r \in [m]} = (f^{a^{k}_{j},{r}}_{k})_{r \in [m]}$, and 
$F_{k+1} =\{(f^{i,r}_{k+1})_{r \in [m]}\}_{i \in [p]} = \{(f^{i,r}_{k})_{r \in [m]}\}_{i \in [p]} = F_{k}$.
Now, to prove that (\ref{efbtp}) holds when $k \in I_{2}$, we have two subcases to consider: $k-1 \in I_{1}$ and $k-1 \in I_{2}$.\\
Case 1: For $k-1 \in I_{1}$, according to (\ref{uuvpyt_igfd}), we have
$ F(x_{k}) = \{(f^{i,r}_{k})_{r \in [m]})\}_{i \in [p]} \preceq^{l}_{K} \{(C^{i,r}_{k})_{r \in [m]}\}_{i \in [p]}$ for all $i \in [p]$. Then, from (\ref{gcgncd}) and using  $ (f^{i,r}_{k+1})_{r \in [m]} = (f^{{i,r}}_{k})_{r \in [m]}$, we have for all $i \in [p]$ that
\begin{align}\label{rtyu_ptdc}
(f^{{i,r}}(x_{k+1}))_{r \in [m]}& =\frac{\mu_{k}q_{k}(f^{i,r}(x_{k+1}))_{r \in [m]}+(f^{{i,r}}(x_{k+1}))_{r \in [m]}}{q_{k+1}} \nonumber\\
& =
\frac{\mu_{k}q_{k}(f^{{i,r}}(x_{k}))_{r \in [m]}+(f^{{i,r}}(x_{k+1}))_{r \in [m]}}{q_{k+1}} \nonumber\\
& \preceq_{K} \frac{\mu_{k}q_{k} (C^{{i,r}}_{k})_{r \in [m]} + (f^{i,r}(x_{k+1}))_{r \in [m]}}{q_{k+1}} \nonumber \\
& =  (C^{{i,r}}_{k+1})_{r \in [m]} \nonumber\\
& = \frac{\mu_{k}q_{k} (C^{{i,r}}_{k})_{r \in [m]} + (f^{i,r}(x_{k+1}))_{r \in [m]}}{q_{k+1}} \nonumber\\
& =  \frac{\mu_{k}q_{k} (C^{{i,r}}_{k})_{r \in [m]} + (f^{i,r}(x_{k}))_{r \in [m]}}{q_{k+1}} \nonumber\\
& \preceq_{K} \frac{\mu_{k}q_{k} (C^{{i,r}}_{k})_{r \in [m]} + (C^{{i,r}}_{k})_{r \in [m]}}{q_{k+1}} = (C^{{i,r}}_{k})_{r \in [m]}.
\end{align}
Therefore, we have $ F(x_{k+1}) =\{(f^{{i,r}}(x_{k+1}))_{r \in [m]}\}_{i \in [p]} \preceq^{l}_{K} 
  \{(C^{i,r}_{k+1})_{r \in [m]}\}_{i \in [p]} \preceq^{l}_{K} \{(C^{i,r}_{k})_{r \in [m]}\}_{i \in [p]}  $.\\
Case 2: For $k-1 \in I_{2}$, suppose that $K = \{j': 1 < j' \le k, k-j' \in I_{1}\}$. If $K = \emptyset$, from the unsuccessful case of Step \ref{getwqs_max} in  Algorithm \ref{alg_avg}, we have for all $i \in [p]$ and for all $p' \in \{0,1, 2, \ldots, k\}$ that 
\begin{align*}
& (f^{{i,r}}(x_{0}))_{r \in [m]} = (f^{i,r}(x_{k-p'}))_{r \in [m]} = (f^{i,r}(x_{k+1}))_{r \in [m]}) \end{align*}
~\text{because}~ $(f^{a^{k}_{j},{r}}(x_{0}))_{r \in [m]} = (f^{a^{k}_{j},{r}}(x_{k-p'}))_{r \in [m]} = (f^{a^{k}_{j},{r}}(x_{k+1}))_{r \in [m]}), ~\text{for all}~ j \in [\omega_{k}].$\\
As a consequence, from the definition (\ref{tbddv_uiet}) of $(C^{i,r}_{k})_{r \in [m]}$, we get for all $i \in [p]$ that
\begin{align}
&(C^{i,r}_{k+1})_{r \in [m]} = (C^{i,r}_{k})_{r \in [m]} = (f^{i,r}_{k+1})_{r \in [m]} \nonumber
\end{align}
~\text{because}~ \begin{align}\label{opuytr_tubv}& (C^{a^{k}_{j},{r}}_{k+1})_{r \in [m]} = (C^{a^{k}_{j},{r}}_{k})_{r \in [m]} = (f^{a^{k}_{j},{r}}_{k+1})_{r \in [m]}  ~\text{for all}~ j \in [\omega_{k}].
\end{align} 
Next, for $K \neq \emptyset$, setting $v := \min\{j':j'\in K\}$, we get for all $i \in [p]$ that
\begin{align}\label{hjuior_yres}
&  (f^{{i,r}}_{k-p'})_{r \in [m]} = (f^{{i,r}}_{k})_{r \in [m]} = (f^{{i,r}}_{k+1})_{r \in [m]} ~\text{for all}~p' \in \{0,1,2, \ldots,v-1\} \nonumber \\
~\text{as}~& (f^{a^{k}_{j},{r}}_{k-p'})_{r \in [m]} = (f^{a^{k}_{j},{r}}_{k})_{r \in [m]} = (f^{a^{k}_{j},{r}}_{k+1})_{r \in [m]} ~\text{for all}~p' \in \{0,1,2, \ldots,v-1\}.
\end{align}
From Definition \ref{tbddv_uiet} of $\{(C^{{i,r}}_{k})_{r\in [m]}\}_{i \in [p]}$, we have for all $i \in [p]$ that  
\begin{align}\label{ureb_uitr}
   &\mu_{k} q_{k} (C^{{i,r}}_{k})_{r \in [m]} + (f^{{i,r}}(x_{k+1}))_{r \in [m]} \nonumber\\
  = ~& \prod_{j=1}^{v-1}\mu_{k-j} q_{k-v+1} (C^{{i,r}}_{k-v+1})_{r \in [m]} + \sum_{i=0}^{v-2} \prod_{j=0}^{i} \mu_{k-j} (f^{{i,r}}_{k-i})_{r \in [m]} + (f^{{i,r}}_{k+1})_{r \in [m]}.
\end{align}
Since  $k-v \in I_{1}$, we must have from (\ref{uuvpyt_igfd}) that $\{(f^{{i,r}}_{k-v+1})_{r \in [m]}\}_{i \in [p]} \preceq^{l}_{K} \{(C^{{i,r}}_{k-v+1})_{r \in [m]}\}_{i \in [p]}$. Next, using (\ref{opuytr_tubv}) and (\ref{ureb_uitr}), we have for all $i \in [p]$ that 
\begin{align}
  &  q_{k+1} (f^{{i,r}}(x_{k+1}))_{r \in [m]}\\ =~& (\prod_{j=0}^{v-1} \mu_{k-j}q_{k-v+1}+ \sum_{i = 0}^{v-2} \prod_{j=0}^{i} \mu_{k-j} + 1) (f^{{i,r}}_{k+1})_{r \in [m]}\nonumber\\
   =~& ( \prod_{j=0}^{v-1} \mu_{k-j} q_{k-v+1}(f^{{i,r}}_{k-v+1})_{r \in [m]} + \sum_{i=0}^{v-2} \prod_{j=0}^{i} \mu_{k-j} (f^{{i,r}}_{k-j})_{r \in [m]}) + (f^{{i,r}}(x_{k+1}))_{r \in [m]}\nonumber \\
   \preceq_{K} ~&~ \mu_{k}q_{k} (C^{{i,r}}_{k})_{r \in [m]} + (f^{{i,r}}(x_{k+1}))_{r \in [m]}\label{wrty_opuy} = q_{k+1}(C^{i,r}_{k+1})_{r \in [m]}.
\end{align}
Hence, we have $\{(f^{{i,r}}(x_{k+1}))_{r \in [m]}\}_{i \in [p]} \preceq^{l}_{K} \{(C^{i,r}_{k+1})\}_{i \in [p]}$, and along the similar lines of (\ref{rtyu_ptdc}), we have 
 \begin{align}\label{hjknm_jhjk}
     \{(f^{{i,r}}(x_{k+1}))_{r \in [m]}\}_{i \in [p]} \preceq^{l}_{K} \{(C^{i,r}_{k+1})\}_{i \in [p]} \preceq^{l}_{K} \{(C^{i,r}_{k+1})\}_{i \in [p]}~\text{for all}~ k \in I_{2}, i \in [p].
 \end{align}
If $\mu_{k} \neq 0$, from (\ref{hjknm_jhjk}), we get 
  \begin{align}\label{hhjhkjknm_jhjk}
     \{(f^{{i,r}}(x_{k+1}))_{r \in [m]}\}_{i \in [p]} \preceq^{l}_{K} \{(C^{i,r}_{k+1})\}_{i \in [p]} \preceq^{l}_{k} \{(C^{i,r}_{k+1})\}_{i \in [p]}~\text{for all}~ k \in I_{2}, i \in [p].
 \end{align}
 If $\mu_{k} = 0$, from  Definition \ref{tbddv_uiet} of $(C^{{i,r}}_{k})_{i \in [p]}$ and $k \in I_{2}$, we have for all $i \in [p]$ that 
\begin{align*}
(f^{{i,r}}_{k+1})_{r \in [m]} = (C^{{i,r}}_{k+1})_{r \in [m]} = (f^{{i,r}}_{k})_{r \in [m]}.  
\end{align*} 
Therefore, from (\ref{hhjhkjknm_jhjk}), we see that $\{(f^{i,r}_{k})\}_{i\in [p]} \preceq^{l}_{K} \{(C^{i,r}_{k})\}_{i \in [p]}$ when $k-1 \in I_{2}$. Thus, (\ref{efbtp}) and (\ref{truhg_wdd}) hold for all $k \in I_{2}$. This completes the proof.
\end{proof}


\begin{cor}\label{sg_1_ag1}
If a non-monotone trust-region step $s_k$ of Algorithm \ref{alg_max} or Algorithm \ref{alg_avg} at $x_k$ is accepted for the objective function ${\widetilde{f}}^{{a}^{k}}_{k} = (f^{{a}^{k}_{1}}_{k}, f^{{a}^{k}_{2}}_{k}, \ldots, f^{{a}^{k}_{\omega_k}}_{k})^\top$ of \eqref{vop_at_x_k}, then $s_k$ is also accepted for the objective map of \eqref{fgcx} at $x_k$.
\end{cor}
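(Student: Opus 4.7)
My plan is to reduce the corollary directly to the componentwise descent information already established and then lift it to a set-valued inequality by exploiting the fact that the indices $a^k_j$ enumerate the $K$-minimal elements of $F(x_k)$. The two algorithms will be handled uniformly by carrying a generic reference vector $R^{a^k_j}_k$, which stands for $(f^{a^k_j,r}(x_{l^{j,r}(k)}))_{r \in [m]}$ in the Max-NTRM case and for $(C^{a^k_j,r}_k)_{r \in [m]}$ in the Avg-NTRM case.

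First I would unfold what ``step acceptance for $\widetilde{f}^{a^k}$'' means in our non-monotone setting: by Step~9 of the algorithms one has $\rho^{a^k_j}_k \ge \eta_1 > 0$ for every $j \in [\omega_k]$, and by Proposition \ref{descveri} this is equivalent to
\[
f^{a^k_j}(x_k + s_k) \prec_K R^{a^k_j}_k \quad \text{for every } j \in [\omega_k].
\]
Next, I would argue that for each index $i \in [p]$, by Proposition~2.1 of \cite{steepmethset} there exists $j \in [\omega_k]$ with $f^{a^k_j}(x_k) \preceq_K f^i(x_k)$, and in both the Max-NTRM and Avg-NTRM constructions this entails $R^{a^k_j}_k \preceq_K R^i_k$ (in the Max case because the past indices that enter the maximum coincide, and in the Avg case because the same $a^k$-stability condition drives the weighted average update). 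Thus every element of the reference set $\{R^i_k\}_{i\in[p]}$ dominates some $R^{a^k_j}_k$ in the $K$-order.

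Combining these two observations, I would chain the inclusions exactly as in the proofs of Theorems \ref{hfrhnye_uyhio} and \ref{hfrhnye_uyhio_avg}:
\[
\{R^i_k\}_{i\in[p]} \subseteq \{R^{a^k_j}_k\}_{j\in[\omega_k]} + K \subseteq \{f^{a^k_j}(x_k+s_k)\}_{j\in[\omega_k]} + \operatorname{int}(K) \subseteq F(x_{k+1}) + K,
\]
which is precisely $F(x_{k+1}) \preceq^l_K \{R^i_k\}_{i\in[p]}$, the definition of acceptance of the step $s_k$ for the set-valued map at $x_k$ (namely $F(x_{k+1}) \preceq^l_K F(x_{l(k)})$ for Max-NTRM and $F(x_{k+1}) \preceq^l_K \{(C^{i,r}_k)_{r\in[m]}\}_{i\in[p]}$ for Avg-NTRM).

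The main obstacle I anticipate is the bookkeeping between the ``$a^k$ stable across the past window'' branch and the ``$a^k$ has changed'' branch in the definitions \eqref{ghvt_ugh_huh} and \eqref{tbddv_uiet}: in the unstable branch the reference vector collapses to $f^{a^k_j}(x_k)$, and one must verify the same dominance $R^{a^k_j}_k \preceq_K R^i_k$ still follows from the minimality of $f^{a^k_j}(x_k)$ in $F(x_k)$. Once this case split is carried out cleanly, both algorithms deliver the desired conclusion by the same chain of inclusions.
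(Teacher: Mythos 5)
Your proposal is correct and follows essentially the same route as the paper: the paper's proof simply invokes the inclusion chains (\ref{ghggh_first_case}), (\ref{ghggh_second_case}) and (\ref{hjh_uhv_g}) already established inside Theorems \ref{hfrhnye_uyhio} and \ref{hfrhnye_uyhio_avg} to conclude $F(x_{k+1}) \preceq^{l}_{K} F(x_{l(k)})$ (Max) and $F(x_{k+1}) \preceq^{l}_{K} \{(C^{i,r}_{k})_{r\in[m]}\}_{i\in[p]}$ (Avg), which is exactly the chain you reconstruct from Proposition \ref{descveri} and Proposition 2.1 of \cite{steepmethset}. Your unified reference vector $R^{a^k_j}_k$ and the case split on whether $a^k$ is stable across the past window mirror the paper's treatment precisely.
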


\begin{proof}
Using (\ref{ghggh_first_case}) and (\ref{ghggh_second_case}), we get 
\begin{align}\label{max_acc_step}
   F(x_{k+1}) \preceq^{l}_{K}F(x_{l(k)}),
\end{align}
from which the conclusion follows for Algorithm \ref{alg_max}. Similarly, using (\ref{hjh_uhv_g}), we get
\begin{align}\label{avg_acc_step}
   F(x_{k+1}) \preceq^{l}_{K} \{(C^{i,r}_{k})_{r \in [m]} \}_{i \in [p]},
\end{align}
from which the same conclusion follows for Algorithm \ref{alg_avg}.
\end{proof}

The following result ensures that for a sequence $\{x_{k}\}$ generated by Algorithm \ref{alg_max} or Algorithm \ref{alg_avg}, number of unsuccessful steps is always finite and we will definitely see a successful step eventually.

\begin{theorem}\label{rtyue_ouit}
Let $\{x_{k}\}$ be a sequence generated by Algorithm \ref{alg_max} or Algorithm \ref{alg_avg}. Suppose Assumptions \ref{fun_bndbelow}--\ref{label_boundd} hold true, and $\lvert \theta(x_{k}) \rvert > \epsilon $ for some $\epsilon>0$. Then, for any $k$, there is an integer $p \ge 0$ such that $x_{k+p}$ is a successful iterate.
\end{theorem}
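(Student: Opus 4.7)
The plan is to argue by contradiction. Assume every iterate $x_{k+q}$ for $q\ge 0$ is unsuccessful. Then the step-acceptance rule in Step~\ref{getwqs_max} forces $x_{k+q}=x_k$ for every $q$, while the radius-update rule in Step~\ref{trust_region_radius_update_max} multiplies $\Omega_{k+q}$ by a factor in $[\gamma_1,\gamma_2]\subset(0,1)$ at each unsuccessful step. Consequently $\Omega_{k+q}\to 0$ and $\|s_{k+q}\|\le\Omega_{k+q}\to 0$ as $q\to\infty$, while $x_k$, its partition set $P_k$, the cardinality $\omega_k$, and $\theta(x_k)$ remain unchanged, with $|\theta(x_k)|>\epsilon$ by hypothesis. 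Since $|\theta(x_k)|>0$, Theorem~\ref{critopti}(\ref{sucfr}) guarantees $s_{k+q}\ne 0$ for every $q$.

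The first main step is to dominate the non-monotone reduction ratio by a monotone one. For Max-NTRM, by the definition \eqref{ghvt_ugh_huh} one always has $(f^{i,r}(x_{l^{i,r}(k+q)}))_{r\in[m]}\succeq_{K} f^{i}(x_k)$; for Avg-NTRM, Theorem~\ref{hfrhnye_uyhio_avg} gives $(C^{i,r}_{k+q})_{r\in[m]}\succeq_{K} f^{i}(x_k)$. Writing $\mathcal{R}^{j}_{k+q}$ for $(f^{a^{k+q}_j,r}(x_{l^{j,r}(k+q)}))_{r\in[m]}$ in the first case and for $(C^{a^{k+q}_j,r}_{k+q})_{r\in[m]}$ in the second, Lemma~\ref{proori}(\ref{htytuttd}) yields, for every $j\in[\omega_k]$,
\begin{equation*}
-\Delta_{-K}\!\left(f^{a^{k+q}_j}(x_k+s_{k+q})-\mathcal{R}^{j}_{k+q}\right)\ \ge\ -\Delta_{-K}\!\left(f^{a^{k+q}_j}(x_k+s_{k+q})-f^{a^{k+q}_j}(x_k)\right).
\end{equation*}
Hence, in either algorithm, the non-monotone ratio dominates its monotone counterpart, and it suffices to show that the latter tends to $1$.

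For the monotone case I combine Corollary~\ref{utrw_kkjx} with Lemma~\ref{tyuio_pret}. The corollary lower-bounds the predicted reduction by $\tfrac{\beta}{2}\tfrac{|\theta(x_k)|}{\|s_{k+q}\|}\min\{|\theta(x_k)|/(\|s_{k+q}\|\mathcal{K}_1),\Omega_{k+q}\}$; since $|\theta(x_k)|>\epsilon$ and $\|s_{k+q}\|\le\Omega_{k+q}\to 0$, for all $q$ large enough we have $|\theta(x_k)|>\mathcal{K}_1\Omega_{k+q}^2\ge\mathcal{K}_1\|s_{k+q}\|\Omega_{k+q}$, so the inner minimum equals $\Omega_{k+q}$, and the elementary bound $\Omega_{k+q}/\|s_{k+q}\|\ge 1$ gives a uniform lower bound $\beta\epsilon/2>0$ on the predicted reduction. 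On the other hand, Lemma~\ref{tyuio_pret} bounds the actual-versus-predicted discrepancy by $\mathcal{K}\|s_{k+q}\|^2\le\mathcal{K}\Omega_{k+q}^2$. Dividing,
\begin{equation*}
\bigl|\rho^{a^{k+q}_j}_{k+q}-1\bigr|\ \le\ \frac{2\mathcal{K}\,\Omega_{k+q}^2}{\beta\epsilon}\ \longrightarrow\ 0,
\end{equation*}
so for $q$ sufficiently large $\rho^{a^{k+q}_j}_{k+q}\ge\eta_1$ for every $j\in[\omega_k]$. By Step~\ref{getwqs_max}, $x_{k+q}$ is then a successful iterate, contradicting the standing assumption.

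The principal obstacle is the dominance step: one must check that the reference value subtracted inside the oriented distance in the non-monotone numerator, whether the componentwise maximum in Max-NTRM or the weighted average in Avg-NTRM, $K$-dominates $f^{a^{k+q}_j}(x_k)$, irrespective of which branch of \eqref{ghvt_ugh_huh} or \eqref{tbddv_uiet} is active. Once this reduction to the monotone ratio is secured, the remainder is the classical trust-region collapse-of-ratio argument, where an $O(\Omega^2)$ model error is asymptotically negligible compared to a predicted reduction controlled from below by the assumption $|\theta(x_k)|>\epsilon$.
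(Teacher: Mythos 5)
Your proposal is correct and follows essentially the same route as the paper's proof: contradiction, forcing $\Omega_{k+p}\to 0$ via the unsuccessful-step radius update, reduction of the non-monotone ratio to the monotone one through the dominance relations \eqref{jugr_uior} (Max-NTRM) and \eqref{truhg_wdd} (Avg-NTRM) together with Lemma \ref{proori}(\ref{htytuttd}), and the classical argument that the $O(\|s_{k+p}\|^2)$ discrepancy from Lemma \ref{tyuio_pret} divided by the predicted reduction lower-bounded via Corollary \ref{utrw_kkjx} and $|\theta(x_k)|>\epsilon$ drives the ratio to $1$. The only cosmetic difference is that you extract a constant lower bound $\beta\epsilon/2$ on the predicted reduction using $\Omega_{k+p}/\|s_{k+p}\|\ge 1$, whereas the paper keeps the $\min\{\cdot,\Omega_{k+p}\}$ form; both yield the same conclusion.
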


\begin{proof} 
We prove the result by the method of  contradiction. Assume that there exists a $k$ such that $x_{k+p}$ is unsuccessful iterate for all $p>0$. Then,  
\begin{align}\label{gjnjkut_pouyt}
\exists~ l \in [\omega_{k}] ~\text{such that}~ \rho^{a^{k+p}_{l}}_{k+p} < \eta_{1}, ~\text{for}~ p = 0, 1, 2, \ldots.
\end{align}
From (3.22) in \cite{trust2025setopt}, we then obtain
\begin{align}\label{fwet_kvg_uin}
    \Delta_{-K}(f^{a^{k+p}_{l}}(x_{k+p}+s_{k+p})-f^{a^{k+p}_{l}}(x_{k+p})) >0.
\end{align}
Thus, from the unsuccessful case in Step \ref{getwqs_max} and Step \ref{trust_region_radius_update_max} of Algorithm \ref{alg_max}, we have
\begin{align}\label{ghddf_uuhvvb}
    & x_{k+p} = x_{p},~\text{for}~ p =0,1,2, \ldots
\end{align}
\text{and} 
\begin{align}\label{jhugg_ohuh} 
    \lim_{p \to \infty}\Omega_{k+p} =  0.
\end{align}
Next, from Lemma \ref{WeY_rjt}, $\lvert \theta(x) \rvert > \epsilon$, and the intermediate step (4.11) in \cite{trust2025setopt} to prove Corollary \ref{utrw_kkjx}, we get
\begin{align}\label{eutyr_uihrg}
    \lVert s_{k} \rVert^{2} &\le \frac{4}{T} \lvert \theta(x_{k}) \rvert,
\end{align}
and for all $j \in [\omega_{k+p}]$,
\begin{align}  
\Delta_{-K}(m^{{a}^{k+p}_{j}}(0)-m^{{a}^{k+p}_{j}}\left(s_{k+p}\right)) & \ge \Delta_{-K}(m^{{a}^{k+p}_{j}}(0))-\Delta_{-K}(m^{{a}^{k+p}_{j}}\left(s_{k+p}\right) \label{ewrt_iop} \\
& \ge \frac{\beta}{2} \frac{\lvert\theta(x_{k+p})\rvert}{\lVert s_{k+p} \rVert}\min\left\{ \frac{\lvert \theta(x_{k+p}) \rvert}{\lVert s_{k+p}\rVert \mathcal{K}_{1}}, \Omega_{k+p} \right\} \nonumber \\
 & \ge \frac{\beta}{4} \frac{\sqrt{T}\lvert \theta(x_{k+p})\rvert}{\lvert \theta(x_{k+p})\lvert^{\frac{1}{2}}} \min \left\{\frac{\sqrt{T}\lvert \theta(x_{k+p})\rvert}{2\lvert \theta(x_{k+p})\rvert^\frac{1}{2} \mathcal{K}_{1}}, \Omega_{k+p} \right\} \nonumber\\
 & \ge \frac{\beta}{4} \sqrt{T} \epsilon^\frac{1}{2} \min\left\{\frac{\sqrt{T}\epsilon^{\frac{1}{2}}}{2 \mathcal{K}_{1}}, \Omega_{k+p}\right\} >0 \label{bvhgjgj_gjygfhj} . 
\end{align}
 Next, we need to prove that 
 \begin{align}
 \lvert \rho^{a^{k+p}_{l}}_{k+p}-1\rvert = &\left\lvert\frac{-\Delta_{-K}(f^{a^{k+p}_{l}}(x_{k+p}+ s_{k+p})-f^{a^{k+p}_{l}}(x_{k+p}))}{\Delta_{-K}(m^{a^{k+p}_{l}}(0)-m^{a^{k+p}_{l}}_{k+p}(s_{k+p}))} -1\right\rvert\nonumber\\
 \le &\frac{\mathcal{K}\lVert s_{k+p} \rVert^2}{\frac{\beta}{4} \sqrt{T}\epsilon^{\frac{1}{2}} \min \left\{ \frac{\sqrt{T} \epsilon^{\frac{1}{2}}}{2 \mathcal{K}_{1}}, \Omega_{k+p}\right\}}\label{hjbf_khv}, \text{for some} ~\mathcal{K} >0.
 \end{align} 
For this, from (\ref{gjnjkut_pouyt}), recall that
\begin{align}\label{uyiear_pi}
    \rho^{a^{k+p}_{l}} < \eta_{1} < 1.
\end{align}
From Lemma \ref{tyuio_pret}, 
(\ref{fwet_kvg_uin}), (\ref{bvhgjgj_gjygfhj}) and (\ref{uyiear_pi}), for all $j \in [\omega_{k+p}]$, we have  
\begin{align}
0 < ~& \lvert \rho^{a^{k+p}_{l}}-1\rvert \\
=~& 1 - \rho^{a^{k+p}_{l}}\nonumber\\
 =~&  1+\frac{\Delta_{-K}(f^{a^{k+p}_{j}}(x_{k+p}+ s_{k+p})-f^{a^{k+p}_{j}}(x_{k+p}))}{\Delta_{-K}(m^{a^{k+p}_{j}}(0)-m^{a^{k+p}_{j}}(s_{k+p}))}  \nonumber\\
\overset{(\ref{ewrt_iop})}{\le} ~& 1 + \frac{\Delta_{-K}(f^{a^{k+p}_{j}}(x_{k+p}+s_{k+p})-f^{a^{k+p}_{j}}(x_{k+p}))}{\Delta_{-K}(0)-\Delta_{-K}(m^{a^{k+p}_{j}}(s_{k+p}))} \nonumber\\
 \le ~& \frac{-\Delta_{-K}(m^{a^{k+p}_{j}}(s_{k+p}))+\Delta_{-K}(f^{a^{k+p}_{j}}(x_{k+p}+s_{k+p})-f^{a^{k+p}_{j}}(x_{k+p}))}{\Delta_{-K}(0)-\Delta_{-K}(m^{a^{k+p}_{j}}(s_{k+p}))} \nonumber\\
\le ~& \frac{ \lvert -\Delta_{-K}(f^{a^{k+p}_{j}}(x_{k+p}+s_{k+p})- 
    f^{a^{k}_{j}}(x_{k+p}) )+ \Delta_{-K}(-m^{a^{k+p}_{j}}(0)+ m^{a^{k+p}_{j}}(s_{k+p}))\rvert}{\Delta_{-K}(0)-\Delta_{-K}(m^{a^{k+p}_{j}}(s_{k+p}))} \nonumber\\
 {\le}~& \frac{\mathcal{K}\lVert s_{k+p} \rVert^2}{\frac{\beta}{4} \sqrt{T}\epsilon^{\frac{1}{2}} \min \left\{ \frac{\sqrt{T} \epsilon^{\frac{1}{2}}}{2 \mathcal{K}_{1}}, \Omega_{k+p}\right\}} \label{tey_op} \text{by Lemma 
   \ref{tyuio_pret}}.
\end{align}
Thus, from (\ref{tey_op}), we have that (\ref{hjbf_khv}) is true. 
For sufficiently large $p$, from (\ref{jhugg_ohuh}) and $\lVert s_{k+p}\rVert \le \Omega_{k+p}$, we have, for all $j \in [\omega_{k+p}]$, that
\begin{align}\label{tyuop_yde}
    \lim_{p \to \infty} \frac{-\Delta_{-K}(f^{a^{k+p}_{j}}(x_{k}+s_{k})-f^{a^{k+p}_{j}}(x_{k+p}))}{\Delta_{-K}(0-m^{a^{k+p}_{j}}(s_{k+p}))} = 1.
\end{align}
Here, considering only Algorithm \ref{alg_max}, we recall the relation (\ref{jugr_uior}) to get 
for all $j \in [\omega_{k+p}]$ that  
\begin{align}\label{teryup_efd}
&(f^{a^{k+p}_{j,r}}(x_{k+p}))_{r \in [m]} \preceq_{K} (f^{a^{k+p}_{j,r}}(x_{l^{r}_{j}(k+p)}))_{r \in [m]}\nonumber\\
  \implies   &  (f^{a^{k+p}_{j,r}}(x_{k+p}))_{r \in [m]} - (f^{a^{k+p}_{j,r}}(x_{l^{r}_{j}(k+p)}))_{r \in [m]} \in -K \nonumber\\
  \implies & \Delta_{-K}((f^{a^{k+p}_{j,r}}(x_{k+p}))_{r \in [m]} - (f^{a^{k+p}_{j,r}}(x_{l^{r}_{j}(k+p)}))_{r \in [m]}) \le 0 \nonumber \\
\overset{\text{Lemma \ref{proori} (\ref{fyf})}}{\implies} & \Delta_{-K}((f^{a^{k+p}_{j}}(x_{k+p}+s_{k+p})-(f^{a^{k+p}_{j,r}}(x_{l^{r}_{j}(k+p)}))_{r \in [m]}) \nonumber\\&-(f^{a^{k+p}_{j}}(x_{k+p}+s_{k+p})-(f^{a^{k+p}_{j,r}}(x_{k+p}))_{r \in [m]}))  \le 0 \nonumber\\
  \overset{\text{Lemma \ref{proori} (\ref{tewyueui})}}{\implies} & \Delta_{-K}(f^{a^{k+p}_{j}}(x_{k+p}+s_{k+p})-(f^{a^{k+p}_{j,r}}(x_{l^{r}_{j}(k+p)}))_{r \in [m]}) \nonumber\\  ~~~~~~\le~~&\Delta_{-K}((f^{a^{k+p}_{j}}(x_{k+p}+s_{k+p})- (f^{a^{k+p}_{j,r}}(x_{k+p}))_{r \in [m]})) . 
  \end{align}
From (\ref{teryup_efd}), we can conclude, for all $j \in [\omega_{k+p}]$, that
\begin{align}\label{ggyu_uhugyu}
&\frac{-\Delta_{-K}(f^{a^{k+p}_{j}}(x_{k+p}+s_{k+p})-(f^{a^{k+p}_{j,r}}(x_{k+p}))_{r \in [m]})}{\Delta_{-K}(0-m^{a^{k+p}_{j}}(s_{k+p}))}\nonumber \\
 \le & \frac{-(\Delta_{-K}(f^{a^{k+p}_{j}}(x_{k+p}+s_{k+p})-(f^{a^{k+p}_{j,r}}(x_{l^{r}_{j}(k+p)}))_{r \in [m]}))}{\Delta_{-K}(0-m^{a^{k+p}_{j}}(s_{k+p}))}.
\end{align}

Therefore, according to (\ref{tyuop_yde}), (\ref{ggyu_uhugyu}) and $\eta_{1}\in (0,1)$, when $p$ is sufficiently large, we have ${\rho^{{a^{k+p}_{j}}}} \ge \eta_{1}$ that contradicts (\ref{gjnjkut_pouyt}). This concludes that for any $k$, there is a non-negative $p >0$ such that $x_{k+p}$ is a successful iterate. 

Next, considering Algorithm \ref{alg_avg}, we recall the relation (\ref{truhg_wdd}) to similarly have
\begin{align}\label{huyu_uyty}
  (f^{a^{k+p}_{j,r}}(x_{k+p}))_{r \in [m]}   \prec_{K}(C^{a^{k+p}_{j,r}}_{k+p})_{r \in [m]} ~\text{for all}~ j \in [\omega_{k+p}].  
\end{align}
Then, it follows from (\ref{huyu_uyty}) that for all $j \in [\omega_{k+p}]$,
\begin{align}\label{hgfcbf_yuio}
  & ~\Delta_{-K}(f^{a^{k+p}_{j,r}}(x_{k+p}+s_{k+p})-(C^{a^{k+p}_{j,r}}_{k+p})_{r \in [m]})\nonumber\\ \le &~\Delta_{-K}(f^{a^{k+p}_{j}}(x_{k+p}+s_{k+p})- (f^{a^{k+p}_{j,r}}(x_{k+p}))_{r \in [m]}). 
\end{align}
From (\ref{hgfcbf_yuio}), we have for all $j \in [\omega_{k+p}]$ that
\begin{align}\label{tqwyup_outv}
  & \frac{-\Delta_{-K}(f^{a^{k+p}_{j,r}}(x_{k+p}+s_{k+p})-(f^{a^{k+p}_{j,r}}(x_{k+p}))_{r \in [m]})}{\Delta_{-K}(0-m^{a^{k+p}_{j,r}}(s_{k+p}))} \nonumber\\ \le& \frac{-\Delta_{-K}(f^{a^{k}_{j}}(x_{k+p}+s_{k+p})-(C^{a^{k+p}_{j,r}}_{k+p})_{r \in [m]})}{\Delta_{-K}(0-m^{a^{k+p}_{j}}(s_{k+p}))}.  
\end{align}
Therefore, according to (\ref{hgfcbf_yuio}), (\ref{tqwyup_outv}) and $\eta_{1}\in (0,1)$, when $p$ is sufficiently large we have $\rho^{a^{k+p}_{j}} \ge \eta_{1}$. This again contradicts  
(\ref{gjnjkut_pouyt}) and we conclude that, for any $k$, there is a non-negative $p >0$ such that $x_{k+p}$ is a successful iterate.
\end{proof}

Next, we show that the sequences ${ F(x_{l(k)})}$ and $\{ (C^{i,r}_{k})_{r \in [m]}\}_{i \in [p]}$ are convergent, and that the sequence $\{x_{k}\}$ generated by Algorithm~\ref{alg_max} or Algorithm~\ref{alg_avg} is contained in $\mathcal{L}_{0}$ and hence, by Assumption~\ref{label_boundd}, $\{x_{k}\}$ generated by these algorithms is bounded. 

\begin{lemma}\label{admits_limit_max_avg}
Let $\{x_{k}\}$ be a sequence generated by Algorithm \ref{alg_max} or Algorithm \ref{alg_avg}. Suppose that Assumption \ref{label_boundd} holds. Then, $((f^{i,r}(x_{l^{i,r}(k)}))_{r \in [m]})$ or $((C^{{i,r}}_{k}))_{r \in [m]}$ are convergent.    \end{lemma}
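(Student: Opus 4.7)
The plan is to first establish that the iterates $\{x_{k}\}$ lie in the level set $\mathcal{L}_{0}$, then use boundedness of $\mathcal{L}_{0}$ together with continuity of the $f^{i}$'s to bound the candidate sequences in $\mathbb{R}^{m}$, and finally combine this boundedness with the $K$-monotonicity established in Theorems \ref{hfrhnye_uyhio} and \ref{hfrhnye_uyhio_avg} to conclude convergence.

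First I would show $\{x_{k}\} \subseteq \mathcal{L}_{0}$. For Algorithm \ref{alg_max}, combining Corollary \ref{sg_1_ag1}, which yields $F(x_{k+1}) \preceq^{l}_{K} F(x_{l(k)})$, with the non-increasing property of $\{F(x_{l(k)})\}$ from Theorem \ref{hfrhnye_uyhio}, and observing that $F(x_{l(0)}) = F(x_{0})$ because $N_{0} = 0$, one obtains $F(x_{k+1}) \preceq^{l}_{K} F(x_{0})$, so $x_{k} \in \mathcal{L}_{0}$ for every $k$. The Avg-NTRM case is analogous using \eqref{avg_acc_step} and Theorem \ref{hfrhnye_uyhio_avg}. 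Assumption \ref{label_boundd} then gives that $\{x_{k}\}$ is bounded, and since each $f^{i}$ is continuous, $\{f^{i}(x_{k})\}$ is bounded in $\mathbb{R}^{m}$. For the max-type sequence, each component $f^{i,r}(x_{l^{i,r}(k)})$ is the maximum of $f^{i,r}(x_{k-q})$ over a sliding window, so it inherits this bound. For the avg-type, the recursion $C^{i,r}_{k} = \tfrac{\mu_{k-1}q_{k-1}}{q_{k}} C^{i,r}_{k-1} + \tfrac{1}{q_{k}} f^{i,r}(x_{k})$ has weights summing to one (because $\mu_{k-1}q_{k-1} + 1 = q_{k}$), so by induction $|C^{i,r}_{k}| \le \sup_{j} |f^{i,r}(x_{j})| < \infty$.

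Next I would prove the following auxiliary fact: any bounded sequence $\{y_{k}\} \subseteq \mathbb{R}^{m}$ with $y_{k+1} \preceq_{K} y_{k}$ converges, provided $K$ is a closed, pointed, convex cone. By Bolzano--Weierstrass there is a subsequence $y_{k_{j}} \to y^{*}$. For any fixed $k$, taking the tail $\{y_{k_{j}} : k_{j} \ge k\}$ and using closedness of $K$ in the relation $y_{k} - y_{k_{j}} \in K$, one obtains $y^{*} \preceq_{K} y_{k}$. If a second subsequence $y_{k'_{j}} \to y^{**}$ exists, applying the same argument in both directions yields $y^{*} \preceq_{K} y^{**}$ and $y^{**} \preceq_{K} y^{*}$; pointedness of $K$ then forces $y^{*} = y^{**}$. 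Hence every subsequential limit coincides, so the bounded sequence converges to $y^{*}$.

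Applying this lemma to the two bounded, $K$-monotone sequences $\{(f^{i,r}(x_{l^{i,r}(k)}))_{r \in [m]}\}$ (Max-NTRM) and $\{(C^{i,r}_{k})_{r \in [m]}\}$ (Avg-NTRM) yields the claim. The main obstacle is the last step: since $K$ is a general closed, pointed, convex, solid cone rather than $\mathbb{R}^{m}_{+}$, $K$-monotonicity does not imply componentwise monotonicity, so one cannot directly invoke the scalar monotone convergence theorem and must instead exploit pointedness of $K$ to rule out multiple subsequential limits.
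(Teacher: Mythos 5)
Your proposal is correct and follows the same overall route as the paper: both arguments first show by induction that the iterates stay in the level set $\mathcal{L}_{0}$ (using $F(x_{l(0)})=F(x_0)$, resp.\ $C^{i,r}_{0}=f^{i,r}(x_0)$, together with the chains of $\preceq_K$-inequalities from Theorems \ref{hfrhnye_uyhio} and \ref{hfrhnye_uyhio_avg}), then invoke Assumption \ref{label_boundd} to get boundedness, and finally conclude from the fact that the sequences are $K$-non-increasing and bounded. Where you genuinely add something is the last step: the paper simply asserts that a non-increasing bounded sequence is convergent, which is immediate only when $K=\mathbb{R}^m_+$ (componentwise scalar monotone convergence). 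You correctly observe that for a general closed, pointed, convex ordering cone this needs an argument, and you supply one: Bolzano--Weierstrass gives a subsequential limit $y^{*}$, closedness of $K$ upgrades it to a $\preceq_K$-lower bound of the whole tail, and pointedness ($K\cap(-K)=\{0\}$) forces all subsequential limits to coincide, whence the bounded sequence converges. Your direct bound $\lvert C^{i,r}_{k}\rvert \le \sup_{j}\lvert f^{i,r}(x_{j})\rvert$ via the weight identity $\mu_{k-1}q_{k-1}+1=q_{k}$ is also a cleaner way to get boundedness of the averaged sequence than tracing the paper's chain of inequalities. So the proposal is not only correct but closes a small rigor gap that the paper leaves implicit.
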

\begin{proof}
From Assumption \ref{label_boundd}, note that the level set $\mathcal{L}_{0}$ of $F$ is bounded. Then, first, we consider Algorithm \ref{alg_max} and prove by mathematical induction,  that if $x_{p'} \in \mathcal{L}_{0}$, then $x_{p'+1}\in \mathcal{L}_{0}$ for all $p' = 1, 2, \ldots, k$. For this, by Definition \ref{ghvt_ugh_huh}, we have $ \{f^{{i,r}}(x_{l^{i,r}(0)})_{r \in [m]}\}_{i \in [p]} = F_{0}= \{(f^{{i,r}}(x_{0}))_{r \in [m]}\}_{i \in [p]}$, for all $i \in [p]$. Additionally, since $\{x_{k}\}$ is a successful iterate, (\ref{chyon_soup}) also holds. Then, using these and (\ref{gercyuk_ob}) of Theorem \ref{hfrhnye_uyhio}, we have for all $i \in [{p}]$ and for all $k$ that

\begin{align}\label{hvhhhhhhhhhhhv_gvv}
(f^{{i,r}}(x_{k+1}))_{r \in [m]} \preceq_{K} \left(f^{{i,r}}(x_{l^{i,r}(k+1)})\right)_{r \in [m]} \preceq_{K} \left(f^{{i,r}}(x_{l^{i,r}(k)})\right)_{r \in [m]}\preceq_{K} f^{i}(x_{0}), 
\end{align}
which can be further expressed as 
\begin{align}\label{gjgpuhu_uy}
    F_{k+1} =~& F(x_{k+1}) = \{ (f^{{i,r}}(x_{k+1})_{r \in [m]}\}_{i \in [p]} \preceq^{l}_{K} \{(f^{i,r}(x_{l^{i,r}(k+1)}))_{r \in [m]} \}_{i \in [p]} \preceq^{l}_{K} ((f^{i,r}({x_{0}})_{r \in [m]})_{i \in [p]} \nonumber\\= ~&F(x_{0}). 
\end{align}
This shows that the sequence $\{x_{k} \}$ is contained in $\mathcal{L}_{0}$. Now, from Theorem \ref{hfrhnye_uyhio} and (\ref{gjgpuhu_uy}), we have that $ F(x_{l(k)})$, i.e., $\{ (f^{i,r}(x_{l^{i,r}(k+1)}))_{r \in [m]} \}_{i \in [p]}$ is a non-increasing sequence and bounded. Therefore, $\{ F(x_{l(k)})\}$ is convergent.

Next, we consider Algorithm \ref{alg_avg}. Similar to above, by mathematical induction, we prove that if $x_{p'}\in {\mathcal L}_{0}$, then $x_{p'+1}\in \mathcal{L}_{0}$ for all $p'=1,2, \ldots, k$. For this, by Definition \ref{tbddv_uiet}, we have 
$\{(C^{i,r}_{0})_{r \in [m]}\}_{i \in [p]} =\{(f^{i,r}(x_{0}))_{r \in m}\}_{i \in [p]} = F_{0}$. Additionally, since $\{ x_{k}\}$ is a successful iterate, (\ref{yhhfop_ugv}) holds. Then, using these and (\ref{truhg_wdd}) of Theorem \ref{hfrhnye_uyhio}, we have
\begin{align}\label{hggh_ojh}
  F_{k+1} =& \{ (f^{i,r}(x_{k+1}))_{r \in [m]} \}_{i \in [p]} \preceq_{K} \{(C^{i,r}_{k+1})_{r \in [m]} \}_{i \in [p]} \preceq_{K} \{ (C^{i,r}_{k})_{r \in [m]} \}_{i \in [p]} \preceq_{K} \{ f^{i}(x_{0})\}_{i \in [p]}.
\end{align}
Here again, the sequence $\{x_{k} \}$ is contained in $\mathcal{L}_{0}$. Since, from Theorem \ref{hfrhnye_uyhio_avg} and (\ref{hggh_ojh}),  the sequence $\{(C^{i,r}_{k})_{r \in [m]}\}_{i \in [p]}$ is non-increasing and bounded, we conclude that $\{(C^{i,r}_{k})_{r \in [m]}\}_{i \in [p]}$ is convergent.
\end{proof}

Finally, in Theorem \ref{gjhg_uyhugy}, we show that the sequence $\{x_{k}\}$ generated by the two algorithms converges to a $K$-critical point of \eqref{fgcx}.

\begin{theorem}\label{gjhg_uyhugy}
Let $\{x_{k}\}$ be a sequence of regular iterative points for $F$ generated by Algorithm \ref{alg_max} or Algorithm \ref{alg_avg}, and let any limit point of $\{x_{k}\}$ be a regular point of $F$. Suppose that Assumptions \ref{hess_bnd}-\ref{grad_bnd} hold. Then, we have 
\begin{enumerate}[(i)]
    \item $ \liminf_{k \to \infty} \lvert \theta(x_{k}) \rvert = 0$\label{ygiu_iie}, and 
    \item every limit point of $\{x_{k}\}$ is a $K$-critical point of \eqref{fgcx}. 
\end{enumerate}
\end{theorem}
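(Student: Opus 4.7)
The plan is to adapt the standard contradiction argument for trust-region convergence to the set-valued setting, where the role of the monotonically decreasing objective sequence is played by $\{F(x_{l(k)})\}$ for Max-NTRM (Theorem \ref{hfrhnye_uyhio}) and by $\{(C^{i,r}_{k})_{r\in[m]}\}_{i\in[p]}$ for Avg-NTRM (Theorem \ref{hfrhnye_uyhio_avg}). Both sequences are bounded below and convergent by Lemma \ref{admits_limit_max_avg}, and Theorem \ref{rtyue_ouit} ensures that there are infinitely many successful iterations.

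To prove (i), I would argue by contradiction and assume the existence of $\epsilon>0$ with $|\theta(x_{k})|\ge\epsilon$ for all $k$ sufficiently large. On each successful iterate, the acceptance rule $\rho^{a^{k}_{j}}_{k}\ge\eta_{1}$ combined with Corollary \ref{utrw_kkjx} gives
\[
-\Delta_{-K}\!\left(f^{a^{k}_{j}}(x_{k+1})-\big(f^{a^{k}_{j},r}(x_{l^{j,r}(k)})\big)_{r\in[m]}\right)\ge \eta_{1}\,\tfrac{\beta}{2}\,\tfrac{|\theta(x_{k})|}{\|s_{k}\|}\min\!\left\{\tfrac{|\theta(x_{k})|}{\|s_{k}\|\mathcal{K}_{1}},\,\Omega_{k}\right\},
\]
and the analogous inequality with $(C^{a^{k}_{j},r}_{k})_{r\in[m]}$ in place of $(f^{a^{k}_{j},r}(x_{l^{j,r}(k)}))_{r\in[m]}$ for Avg-NTRM. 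Using the bound $\|s_{k}\|^{2}\le \tfrac{4}{T}|\theta(x_{k})|$ from Lemma \ref{WeY_rjt}, the predicted reduction at each successful iterate is bounded below by a positive constant depending only on $\epsilon$, $T$, $\mathcal{K}_{1}$, and $\Omega_{k}$.

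The main obstacle is ruling out $\Omega_{k}\to 0$ along the successful subsequence. I would control this by reusing the ratio estimate that was developed inside the proof of Theorem \ref{rtyue_ouit}: combining Lemma \ref{tyuio_pret} with Corollary \ref{utrw_kkjx} gives $|\rho^{a^{k}_{j}}_{k}-1|\le \mathcal{O}(\Omega_{k}/\min\{\epsilon^{1/2},\Omega_{k}\})$ whenever $|\theta(x_{k})|\ge\epsilon$, so that once $\Omega_{k}$ drops below a threshold determined by $\epsilon$, $\eta_{2}$, $\mathcal{K}_{1}$ and $T$ the iterate becomes (very) successful and the trust-region update inflates the radius again. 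Consequently, $\Omega_{k}$ stays bounded below by some $\Omega_{\min}>0$ on an infinite tail of successful iterates. Summing the uniformly positive actual reductions over this infinite tail then produces an unbounded decrease in $\{F(x_{l(k)})\}$ (respectively $\{(C^{i,r}_{k})\}$), which contradicts Lemma \ref{admits_limit_max_avg}. Hence $\liminf_{k\to\infty}|\theta(x_{k})|=0$.

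For (ii), let $\bar{x}$ be a limit point of $\{x_{k}\}$ and suppose, for contradiction, that $\bar{x}$ is not $K$-critical. By Theorem \ref{critopti}(b), $\theta(\bar{x})<0$; since $\bar{x}$ is a regular point, continuity of $\theta$ at $\bar{x}$ (Theorem \ref{critopti}(a)) furnishes a neighborhood $U$ of $\bar{x}$ and $\delta>0$ with $|\theta(x)|\ge\delta$ for every $x\in U$. Because infinitely many $x_{k}$ lie in $U$, the localized version of the argument used to establish (i) applies along this subsequence and again forces an unbounded decrease of $\{F(x_{l(k)})\}$ or $\{(C^{i,r}_{k})\}$, contradicting Lemma \ref{admits_limit_max_avg}. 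Therefore $\theta(\bar{x})=0$, and Theorem \ref{critopti}(b) concludes that $\bar{x}$ is a $K$-critical point of \eqref{fgcx}.
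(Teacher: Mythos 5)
Your proposal is correct in substance and relies on the same ingredients (Lemma \ref{WeY_rjt}, Lemma \ref{tyuio_pret}, Corollary \ref{utrw_kkjx}, Theorem \ref{rtyue_ouit}, Lemma \ref{admits_limit_max_avg}), but it runs the contradiction for part (i) in the opposite direction from the paper. The paper first \emph{derives} $\Omega_{k}\to 0$ from the convergence of the reference sequence together with the sufficient-decrease bound, and then uses the ratio estimate $\lvert\rho^{a^{k}_{j}}_{k}-1\rvert\le\mathcal{K}\lVert s_{k}\rVert^{2}/(\text{predicted reduction})$ to conclude that all steps are eventually (very) successful, so the radius cannot shrink --- contradiction. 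You instead use the same ratio estimate up front to show that the radius is bounded \emph{below} by some $\Omega_{\min}>0$, and then sum the uniformly positive actual reductions on the infinitely many successful iterates to contradict the boundedness and convergence of $\{F(x_{l(k)})\}$ (resp.\ $\{(C^{i,r}_{k})\}$). This is the more classical trust-region argument; the paper's is the Sun-style variant. Two details in your sketch deserve explicit care: (a) Lemma \ref{tyuio_pret} controls the \emph{monotone} ratio (with $f^{a^{k}_{j}}(x_{k})$ in the numerator), so you must invoke the comparison $\rho^{\mathrm{NTRM}}\ge\rho^{\mathrm{TRM}}$ (as in (\ref{ggyu_uhugyu}) and (\ref{tqwyup_outv})) to conclude that small $\Omega_{k}$ forces a \emph{very} successful step for the non-monotone ratios as well; and (b) for Max-NTRM the reference value $f^{i,r}(x_{l^{i,r}(k)})$ is a running maximum, so the guaranteed decrease telescopes only once every $\hat{N}+1$ iterations rather than per iteration --- the unbounded-decrease conclusion still follows, but via this sub-sampled telescoping. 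For part (ii), your localization of the argument around a non-critical limit point is actually more careful than the paper's direct appeal to $\liminf\lvert\theta(x_{k})\rvert=0$ plus continuity, which as written only certifies criticality of the particular accumulation point realizing the liminf; your route is the standard way to upgrade to ``every limit point,'' at the cost of having to re-establish the radius lower bound along the subsequence entering the neighborhood.
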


\begin{proof}
First, we consider Algorithm \ref{alg_max}. We have two possibilities: (a) a finite number of successful iterations followed by unsuccessful iterations for large $k$, or (b) an infinite number of successful iterations. In the first case, suppose that $k_{0}$ is the index of the last successful iteration. At this point, if $\lvert \theta(x_{k_{0}+1}) \rvert$ is still greater than $\epsilon$, then, from Theorem \ref{rtyue_ouit}, we can find an index larger than $k_{0}$ that is successful. This is in direct contradiction with the statement of the first case; thus,  case (a) never holds. For case (b), we prove (\ref{ygiu_iie}) by the method of contradiction. Assume that there exists a constant $\epsilon > 0$ and $\mathcal{M} \subseteq \{0,1,2, \ldots\}$ such that for all $k \in \mathcal{M}$, 
\begin{align}\label{tyur_yubfv}
     \lvert \theta(x_{k})\rvert > \epsilon.
\end{align}
First, we need to prove that 
\begin{align}\label{gthitv_obthe}
    \lim\limits_{k \to \infty} \Omega_{k} = 0.
\end{align}
For this, we consider the set of successful iterations $I_{1}$ and the set of unsuccessful iterations $I_{2}$ from (\ref{gutpyr_utdr}). First, we show that (\ref{gthitv_obthe}) holds for $k \in I_{1}$, i.e., 
\begin{align}\label{hff_ioog}
    \lim\limits_{k \to \infty, ~k \in I_{1}} \Omega_{k} =0.
\end{align}
From Lemma \ref{rtyue_ouit}, it is clear that $I_{1}$ is an infinite set. Then, from Lemma \ref{WeY_rjt}, Lemma \ref{utrw_kkjx} and (\ref{tyur_yubfv}), we have
\begin{align}\label{fgio_piytf}
    \Delta_{-K}(m^{a^{k}_{j}}(0)-m^{a^{k}_{j}}(s_{k})) \ge \beta \frac{\sqrt{T}\epsilon^\frac{1}{2}}{4 \mathcal{K}_{1}} \min \left\{ \frac{\sqrt{T}\epsilon^\frac{1}{2}}{2\mathcal{K}_{1}}, \Omega_{k}\right\} ~\forall~j \in [\omega_{k}],
\end{align}
and, from the definition of $\rho^{a^{k}_{j}}_{k}$, (\ref{fgio_piytf}) and Lemma \ref{proori} (\ref{tewyueui}), we have
\begin{align}\label{hgfrt_uyiih}
 &\Delta_{-K} (f^{a^{k}_{j,r}}(x_{l^{r}_{j}(k)}))_{r \in [m]})-\Delta_{-K}(f^{a^{k}_{j}}(x_{k}+s_{k})) \ge \eta_{1}  \beta \frac{\sqrt{T}\epsilon^\frac{1}{2}}{4 \mathcal{K}_{1}} \min \left\{ \frac{\sqrt{T}\epsilon^\frac{1}{2}}{2 \mathcal{K}_{1}}, \Omega_{k}\right\}\\
\implies & \Delta_{-K}(f^{a^{k}_{j}}(x_{k}+s_{k})) \le \Delta_{-K}(f^{a^{k}_{j,r}}(x_{l^{r}_{j}(k)}))_{r\in [m]}) - \eta_{1} \beta \frac{\sqrt{T}\epsilon^\frac{1}{2}}{4 \mathcal{K}_{1}} \min \left\{ \frac{\sqrt{T}\epsilon^\frac{1}{2}}{2 \mathcal{K}_{1}}, \Omega_{k}\right\}.  
\end{align}
Then, applying $\Delta_{-K}$ to the inequality
(\ref{chyon_soup}) and taking limit as $k \to \infty$ and $k \in I_{1}$ on both sides of (\ref{hgfrt_uyiih}), we obtain (\ref{gthitv_obthe}) from Lemma \ref{admits_limit_max_avg}. 

Next, for $k \in I_{2}$, if it is a finite set, it can be shown from (\ref{hff_ioog}) that (\ref{gthitv_obthe}) holds. If 
$I_{2}$ is an infinite set, we define
$\bar{K}= \{d_{k}: k = 1,2, \ldots\}$ as a subset of $I_{2}$, where
\begin{align*}
    & d_{1} = \min \{ d': d' \in I_{2}\},
\end{align*}   
~\text{and}~ 
\begin{align*}
 & d_{k+1} = \min \{d' \in I_{2}:d'-1 \in I_{1}, d'-1 > d_{k}  \}, ~ k \ge 1.
\end{align*}
For $k \ge 1$, according to the definition of $d_{k+1}$, we have $d_{k+1}-1\in I_{1}$. As per Case 3 of  Step \ref{trust_region_radius_update_max} of Algorithm \ref{alg_max}, we then have
\begin{align*}
   \Omega_{d_{k}} \le \gamma_{2} \Omega_{d_{k}-1}. 
\end{align*}
Next, according to the definition of $d_{k+1}$, an integer $h$ can be found which satisfies
\begin{align}\label{yhy_op}
    d_{k} + h < d_{k+1}-1~\text{and}~d_{k}+h \in I_{2}.
\end{align}
Then, let an integer $h_{k}$ be the maximum that satisfies (\ref{yhy_op}). Thus, as per Case 3 of Step \ref{trust_region_radius_update_max} of Algorithm \ref{alg_max}, we have 
\begin{align*}
\Omega_{d_{k}+h_{k}+1} \le \Omega_{d_{k}+h} \le \Omega_{d_{k}} \le \gamma_2 \Omega_{d_{k}-1}, ~~h = 0,1, \ldots,h_{k},
\end{align*}
and consequently 
\begin{align}\label{qwur_uioop}
\Omega_{d_{k}+h_{k}+1} \le \Omega_{d_{k}+h} \le \gamma_{2} \Omega_{d_{k}-1}, ~~h =0,1,\ldots, h_{k}.  
\end{align}
Finally, since $d_{k}+h_{k}+1$ and $d_{k}-1$ lie in $I_{1}$, from (\ref{hff_ioog}) and (\ref{qwur_uioop}) we get that 
\begin{align}\label{eutr_hgjk}
    \lim_{k \to \infty, ~k \in I_{2}} \Omega_{k} =0.
\end{align}
Thus, combining (\ref{hff_ioog}) and (\ref{eutr_hgjk}), we get (\ref{gthitv_obthe}).

Next, for all $j \in [\omega_{k}]$, we have to show that 
\begin{align}\label{typ_yue}
 \lvert \rho^{a^{k}_{j}}_{k}-1\rvert = \left\lvert \frac{-\Delta_{-K}(f^{a^{k}_{j}}(x_{k}+ s_{k})-f^{a^{k}_{j}}(x_{k}))}{\Delta_{-K}(m^{a^{k}_{j}}_{k}(0)-m^{a^{k}_{j}}_{k}(s_{k}))} -1\right\rvert   \le~& \frac{\mathcal{K}\lVert s_{k} \rVert^2}{\frac{\beta}{2} \frac{\sqrt{T}\epsilon^\frac{1}{2}}{2} \min \left\{ \frac{\sqrt{T}\epsilon^\frac{1}{2}}{2 \mathcal{K}_{1}}, \Omega_{k}\right\}}.
\end{align}
Here, two possible cases arise:
\begin{enumerate}[{Case} 1.]
\item \label{case_max_sg_1} 
$\lvert \rho^{a^{k}_{j}}_{k}-1\rvert = \rho^{a^{k}_{j}}_{k}-1 $ 
\item \label{case_max_sg_2}$
\lvert \rho^{a^{k}_{j}}_{k}-1\rvert = 1-\rho^{a^{k}_{j}}_{k} $ 
\end{enumerate}
Without loss of generality, some of the $j$'s in [$\omega_{k}$] are included in Case \ref{case_max_sg_1} and the rest are included in Case \ref{case_max_sg_2}.
In Case \ref{case_max_sg_1}, for all such $j$'s where  
$\rho^{a^{k}_{j}}_{k} >1$ hold, we have from Lemma \ref{tyuio_pret} and (\ref{fgio_piytf}) that 
\begin{align} \label{gvgu_opt_urg}
&\rho^{a^{k}_{j}}_{k}-1 \\
=~ & \frac{-\Delta_{-K}(f^{a^{k}_{j}}(x_{k}+ s_{k})-f^{a^{k}_{j}}(x_{k}))}{\Delta_{-K}(m^{a^{k}_{j}}(0)-m^{a^{k}_{j}}(s_{k}))} -1 \nonumber\\
 \le  ~&  \frac{-\Delta_{-K}(f^{a^{k}_{j}}(x_{k}+ s_{k})-f^{a^{k}_{j}}(x_{k}))-\Delta_{-K}((m^{a^{k}_{j}}_{k}(0)-m^{a^{k}_{j}}_{k}(s_{k}))}{\Delta_{-K}(m^{a^{k}_{j}}(0)-m^{a^{k}_{j}}(s_{k}))} \nonumber\\
  \overset{(\ref{ytehf_1})}{=}  ~&
  \frac{-\Delta_{-K}(\nabla f^{{a}^{k}_{j}}(x_{k})^{\top}s_{k} + \|s_{k}\|^2 \mathcal{K})-\Delta_{-K}(-\nabla f^{a^{k}_{j}}(x_{k})^\top s_{k}-\frac{1}{2} s^\top_{k} \nabla^{2} f^{a^{k}_{j}}(x_{k})s_{k})}{\Delta_{-K}(m^{a^{k}_{j}}(0)-m^{a^{k}_{j}}(s_{k}))} \nonumber\\ 
{\le}~& \frac{\Delta_{-K}(\frac{1}{2} s^\top_{k} \nabla^{2} f^{a^{k}_{j}}(x_{k})s_{k}-\lVert s_{k}\rVert^{2}\mathcal{K})}{\Delta_{-K}(m^{a^{k}_{j}}(0)-m^{a^{k}_{j}}(s_{k}))} ~\text{by Lemma~\ref{proori}(\ref{tewyueui})}~\nonumber\\
{\le}~& \frac{1}{2}\frac{\left\lVert 
 s^\top_{k} \nabla^{2} f^{a^{k}_{j}}(x_{k})s_{k}-\lVert s_{k}\rVert^{2}\mathcal{K}\right\rVert}{\Delta_{-K}(m^{a^{k}_{j}}(0)-m^{a^{k}_{j}}(s_{k}))} ~\text{by Lemma ~\ref{proori}(\ref{aeruin})}~\nonumber\\
 \overset{(\ref{qwtyu_opf})}{\le} &\frac{\mathcal{K}(\lVert s_{k} \rVert)^2}{\frac{\beta}{2} \frac{\sqrt{T}\epsilon^\frac{1}{2}}{2} \min \left\{ \frac{\sqrt{T}\epsilon^\frac{1}{2}}{2\mathcal{K}_{1}}, \Omega_{k}\right\}}.
  \end{align}
Also, in Case \ref{case_max_sg_2}, for all such $j$'s where $\rho^{a^{k}_{j}}_{k}< 1$ holds, we have similarly from Lemma \ref{tyuio_pret} and (\ref{fgio_piytf}) that
\begin{align}\label{tybjc_uoo}
    & 1-\rho^{a^{k}_{j}}_{k} \nonumber\\
  = ~&1 + \frac{\Delta_{-K}( f^{a^{k}_{j}}(x_{k}+ s_{k})-f^{a^{k}_{j}}(x_{k})))}{\Delta_{-K}(m^{a^{k}_{j}}(0)-m^{a^{k}_{j}}(s_{k}))}& \nonumber\\
\overset{(\ref{fgio_piytf})}{\le}~ & 1 + \frac{\Delta_{-K}(f^{a^{k}_{j}}(x_{k}+ s_{k})-f^{a^{k}_{j}}(x_{k})))}{\Delta_{-K}(m^{a^{k}_{j}}(0))-\Delta_{-K}(m^{a^{k}_{j}}(s_{k}))} \nonumber\\
 =~&  \frac{ \Delta_{-K}(0)-\Delta_{-K}(m^{a^{k}_{j}}_{k}(s_{k})) + \Delta_{-K}(f^{a^{k}_{j}}(x_{k}+ s_{k})-f^{a^{k}_{j}}(x_{k}))}{\Delta_{-K}(m^{a^{k}_{j}}(0))-\Delta_{-K}(m^{a^{k}_{j}}(s_{k}))} \nonumber\\  \le ~& \frac{\lvert \Delta_{-K}(m^{a^{k}_{j}}_{k}(s_{k})) - \Delta_{-K}(f^{a^{k}_{j}}(x_{k}+ s_{k})-f^{a^{k}_{j}}(x_{k})) \rvert}{\Delta_{-K}(m^{a^{k}_{j}}(0))-\Delta_{-K}(m^{a^{k}_{j}}(s_{k}))} \nonumber\\
 \overset{(\ref{qwtyu_opf})}{\le} &\frac{{\mathcal{K}}\lVert s_{k} \rVert^2}{\frac{\beta}{2} \frac{\sqrt{T}\epsilon^\frac{1}{2}}{2} \min \left\{ \frac{\sqrt{T} \epsilon^{\frac{1}{2}}}{2 \mathcal{K}_{1}}, \Omega_{k}\right\}}.
  \end{align}
From (\ref{gvgu_opt_urg}) and (\ref{tybjc_uoo}), we observe that (\ref{typ_yue}) is true.
Then, from (\ref{gthitv_obthe}), (\ref{typ_yue}) and $\lVert s_{k} \rVert \le \Omega_{k}$, we have for all $j \in [\omega_{k}]$ that 
\begin{align*}
  \lim_{k \to \infty} \frac{-\Delta_{-K}(f^{a^{k}_{j}}(x_{k}+ s_{k})-f^{a^{k}_{j}}(x_{k}))}{\Delta_{-K}(m^{a^{k}_{j}}(0)-m^{a^{k}_{j}}(s_{k}))}  = 1.   
\end{align*}
Further, from the definition of $\rho^{a^{k}_{j}}_{k}$ of Algorithm \ref{alg_max} and (\ref{jugr_uior}), we get, for all $j$, that
\begin{align*}
   &\frac{-\Delta_{-K}(f^{a^{k}_{j}}(x_{k}+s_{k})-(f^{a^{k}_{j,r}}(x_{k}))_{r \in [m]})}{\Delta_{-K}(0-m^{a^{k}_{j,r}}(s_{k}))} \nonumber\\
\le~&\frac{-\Delta_{-K}(f^{a^{k}_{j}}(x_{k}+s_{k})-(f^{a^{k}_{j,r}}(x_{l^{r}_{j}(k)})_{r \in [m]})}{\Delta_{-K}(0-m^{a^{k}_{j}}(s_{k}))}.   
\end{align*}
Therefore, for sufficiently large $k$, we have 
\begin{align*}
\rho^{a^{k}_{j}}_{k} \ge \eta_{1}, j \in [\omega_{k}], 
\end{align*}
which shows that $\Omega_{k+1} \ge \Omega_{k}$ for sufficiently large $k$. This leads to a contradiction to (\ref{gthitv_obthe}). This shows that the assumption (\ref{tyur_yubfv}) is false and  (\ref{ygiu_iie}) is proved.

Next, for the second part of this theorem, from (\ref{ygiu_iie}), we have 
\begin{align}\label{limit_inf_theta}
\lim_{k \to \infty}\inf~\lvert\theta(x_{k})\rvert = 0.
\end{align}
Since the sequence $\{x_{k}\}$ belongs to $\mathcal{L}_{0}$ that is bounded, $\{x_{k}\}$ is also bounded and has an accumulation point $x^{\ast}$, with $\{x_{k}\}_{k\in \mathcal{M}}$, a subsequence of $\{x_{k}\}$, converging to $x^\ast$.
From part (b) of Theorem \ref{critopti}, $\theta$ is a continuous function on the set of regular points for \eqref{fgcx}. Thus, from (\ref{limit_inf_theta}), we have  
$\theta(x^{\ast})=0$. Hence, 
$x^{\ast}$ is a $K$-critical point for \eqref{fgcx}. 

For Algorithm \ref{alg_avg}, we can prove the same by following a similar approach.    
\end{proof}

\section{Numerical Experiment}\label{Numerical_Experiment}

In this section, we study the performance of our proposed Max-NTRM and Avg-NTRM by comparing with the TRM for SOP \cite{trust2025setopt} and two other methods for SOP: Steepest Descent (SD) \cite{steepmethset} and Conjugate Gradient  (CG) \cite{kumar2024nonlinear} methods. For this, we consider $22$ test SOPs (as defined in Appendix \ref{appendix_ind}) and employ the performance profiling proposed by Dolan and Mor{\'e} \cite{dolan2002benchmarking},
for four performance metrics: average number of non-convergences, average number of iterations, average CPU time, and reciprocal of average step size.

\subsection{Experiment setup}
Similar to \cite{ghalavand2023two}, we constrain our SOPs to the box $\mathcal S$, of the form $[x_{L}, x_{U}]$, by enforcing the next iterate $x_{k}+s_{k}$ to remain within $\mathcal S$, i.e., $x_{L} \le  x_{k}+s_{k} \le x_{U}$, and choose the initial point from $\mathcal S$. Therefore, instead of (3.11), we consider the following modified subproblem: 
\begin{equation}\label{ghtyu_issue_sub}
\left.\begin{aligned} 
&\min  & & t\\
   & \text{subject to} & &\Delta_{-{K}}\left(\nabla f^{{a}^{k}_j}(x_{k})^{\top}s+ \tfrac{1}{2}s^{\top}\nabla^{2}f^{{a}^{k}_j}(x_{k})s\right)-t\le 0, ~j = 1,2, \ldots, {\omega}_{k},\\
 & & &\Delta_{-{K}}\left(\nabla f^{{a}^{k}_{j}}(x_{k})^{\top}s\right)-t \le 0, ~j =1, 2, \ldots, {\omega}_{k},\\
 & & & \lVert s \rVert \le \Omega_{k}, \\
 & & & x_{L}-x_{k}\le s \le x_{U}-x_{k}.
 \end{aligned}
 \right\}
 \end{equation}

For experimentation, we use MATLAB on an Apple M2 system with $8$ CPU cores and $8$ GB of RAM. The parameters associated with the algorithms are given in Table \ref{tab2}, where $\rho$, $\sigma$ are for CG; $\beta_\text{SD}$ and $\nu$ are for SD; and $\Omega_{0}$, $\Omega_\text{max}$, $\eta_{1}$, $\eta_{2}$, $\gamma_{1}$, and $\gamma_{2}$ are for TRM, Max-NTRM and Avg-NTRM. Additionally, $M$ is for Max-NTRM and $\mu_{k}$, $\mu_\text{max}$, and $\mu_\text{min}$ are for Avg-NTRM, with $\mu_{k}=0.5$ fixed across all iterations $k$. For all algorithms, we use the cone $K$ given in the table below and set $\epsilon$ as the tolerance for the stopping criterion, also given in the table below. For TRM, Max-NTRM, Avg-NTRM, the stopping criterion is given by $\lvert t_{k} \rvert < \epsilon$, where $t_{k}$ is the solution of the subproblem (3.11), and for CG and SD, it is given by $\Vert v_{k} \rVert < \epsilon$, where $v_{k}$ is the steepest descent direction obtained as the solution of (4) in \cite{kumar2024nonlinear}. For CG, we use the basic conjugate descent rule $\beta^{\text{CG}}_{k}= 0.99(1-\sigma)\beta^{\text{CD}}_{k}$, where $\beta^{\text{CD}}_{k}$ is given by (49) in \cite{kumar2024nonlinear}.

\begin{table}[htbp]
  \centering
  \resizebox{\textwidth}{!}{\begin{minipage}{\textwidth}
        \caption{Parameters of different algorithms used in all the experiments}
        \label{tab2}
        \begin{tabular}{c c c c c c c c c c c c c c c c c c}
          \hline
          $\rho $ & $\sigma$ & $\beta_\text{SD}$ & $\nu$& $\Omega_0$ & $\Omega_\text{max}$ & $\epsilon$ & $\eta_1$ & $\eta_2$ & $\gamma_1$ & $\gamma_2$ & $K$ & $\bar{M}$ & $\mu_{\min}$ & $\mu_{\max}$ & $\mu_{k}$\\
          \hline
          $0.0001$ & $0.1$ & $0.0001$ &$0.5$& $1$ & $20$ & $10^{-3}$ & $0.001$ & $0.75$ & $0.4$ & $0.9$ & $\mathbb{R}^m_+$ & $10$ & $0$ & $1$ & $0.5$ \\
          \hline
        \end{tabular}
      \end{minipage}}
\end{table}

\begin{figure}[htbp]
\centering
\subfigure[Max-NTRM with cone $K_1$]{%
    \includegraphics[width=0.45\textwidth]{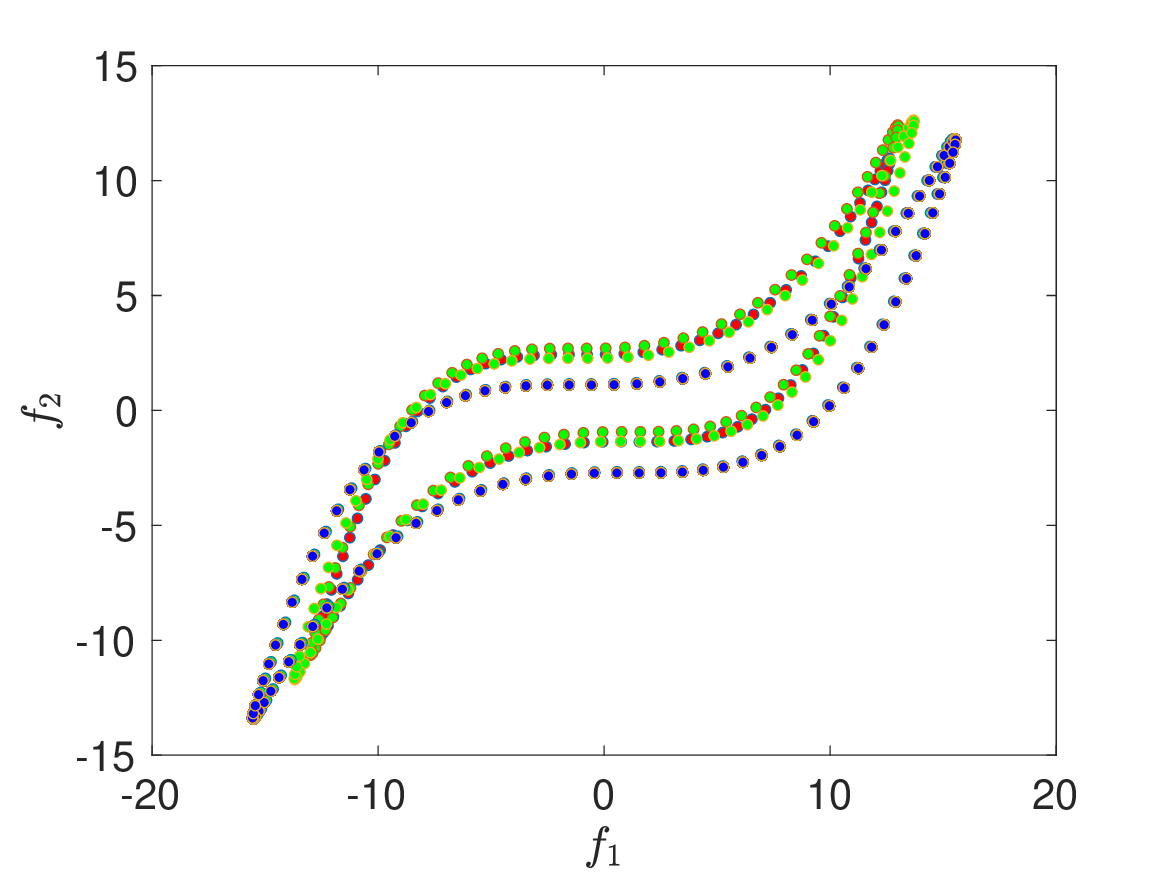}%
    \label{objective_space_max_k1}%
}
\hfill
\subfigure[Max-NTRM with cone $K_2$]{%
    \includegraphics[width=0.45\textwidth]{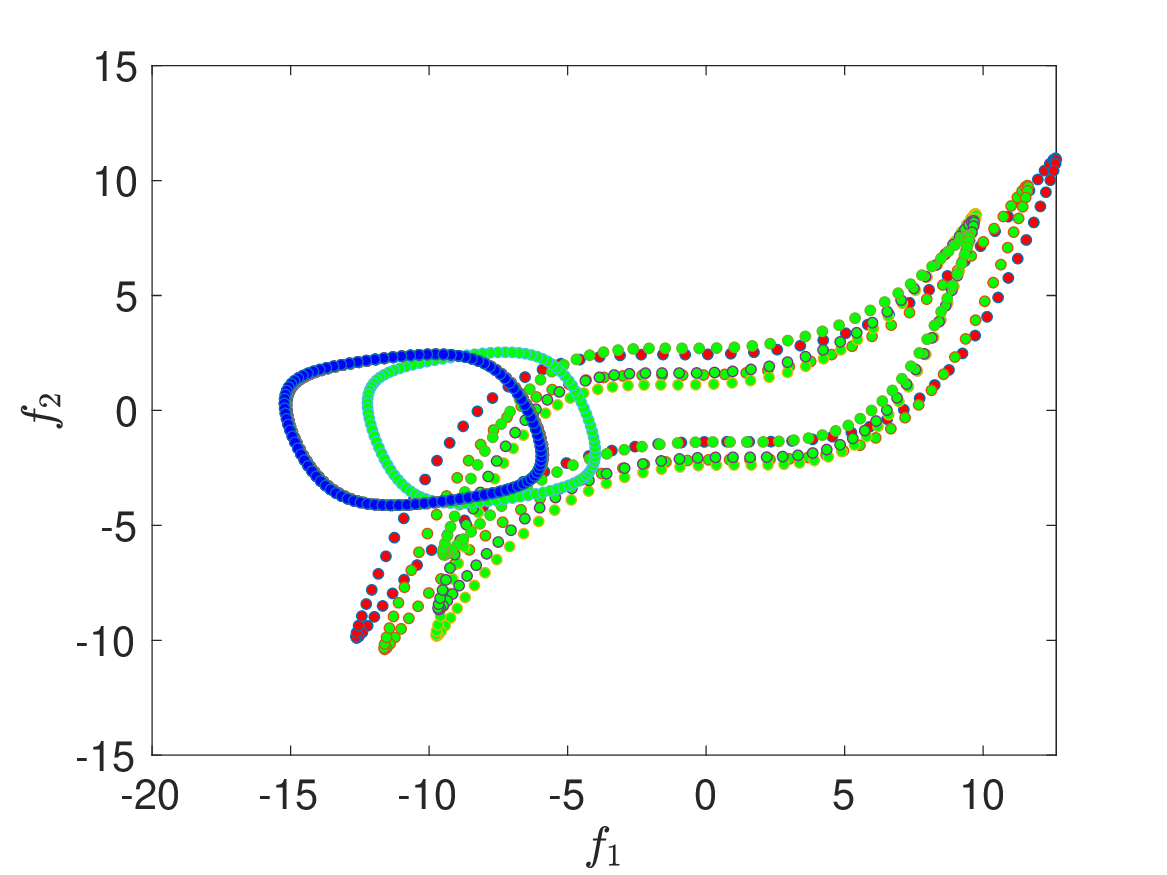}%
\label{objective_space_max_k2}%
}
\vspace{0.5cm} 
\subfigure[Avg-NTRM with cone $K_1$]{%
    \includegraphics[width=0.45\textwidth]{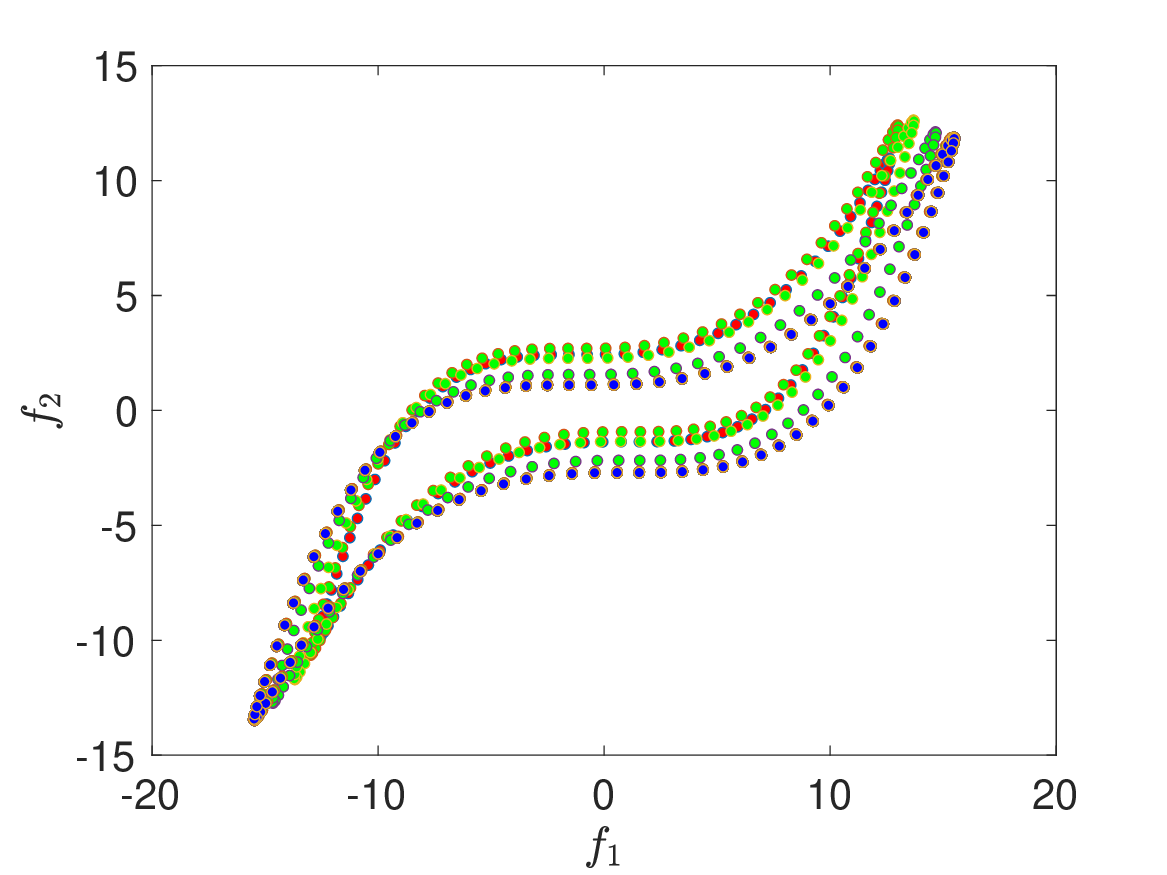}%
    \label{objective_space_avg_k1}%
}
\hfill
\subfigure[Avg-NTRM with cone $K_2$]{%
    \includegraphics[width=0.45\textwidth]{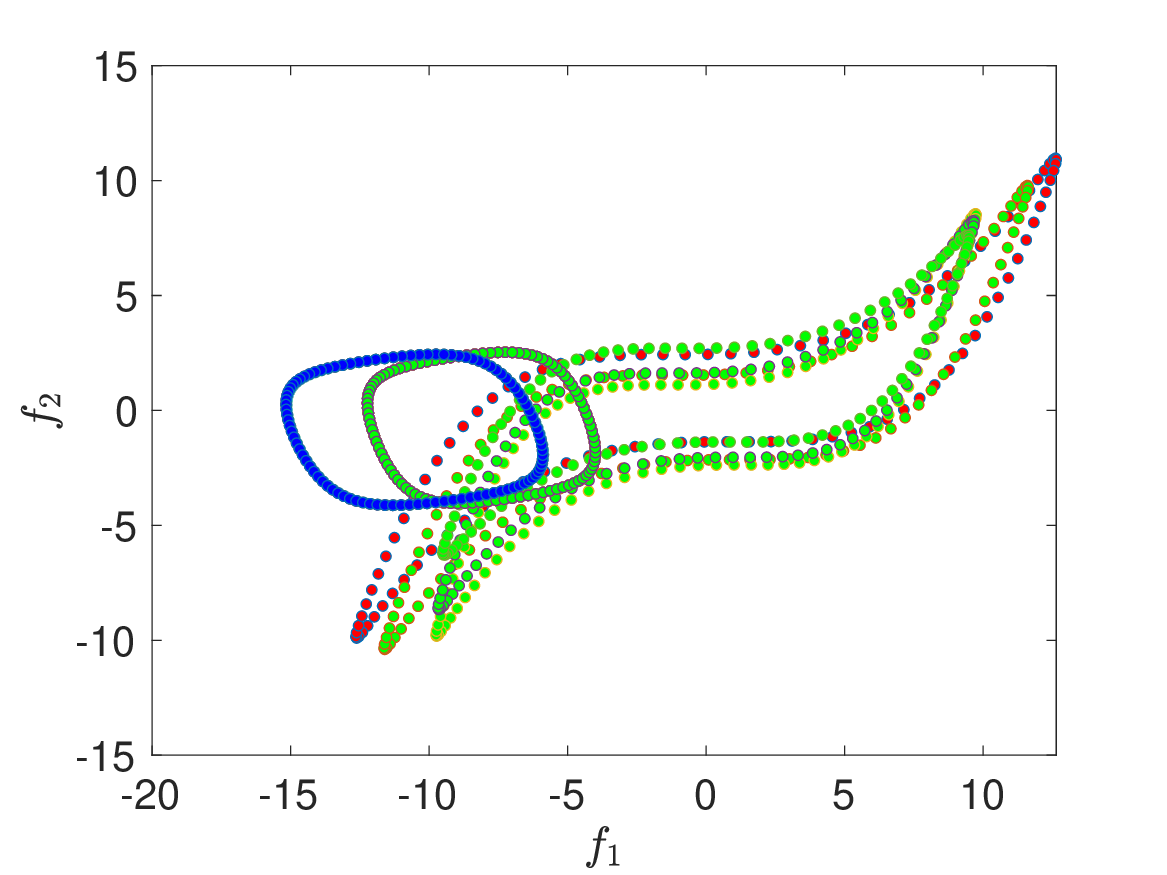}%
    \label{objective_space_avg_k2}%
}
\caption{Iterative points in the objective space generated by Max-NTRM (top row) and Avg-NTRM (bottom row) with cones $K_{1}$ and $K_{2}$ for Problem \ref{Example_5.3_stee} starting from the initial point $x_{0}=(-16.355461, -2.454201)$. The red, green and blue images represent the $F$-values at the initial, intermediate, and final points, respectively.}
\label{cone_changes_intermediate_images}
\end{figure} 

First, we explore whether changing the ordering cone $K$ changes the behavior of Max-NTRM and Avg-NTRM. For this, we consider Problem \ref{Example_5.3_stee} with two different cones $K_{1}:= \mathbb{R}^{2}_{+}$ and $K_{2} := \{(y_{1}, y_{2}) \in \mathbb{R}^{2}_{+} : y_{1} \ge 3 y_{2}, 3 y_{1} \le y_{2} \}$ and run the two algorithms starting from the initial point $x_{0} = (-16.355461, -2.454201)$, as shown in Figure \ref{cone_changes_intermediate_images}. From Figures \ref{objective_space_max_k1} and \ref{objective_space_avg_k1}, it seems that the algorithms converge under the cone $K_{1}$, but they actually do not.  In these two figures, the blue images corresponds to the $F$-value at the points obtained at the maximum number of iterations, not at a convergent point. In contrast, NTRMs converge under $K_{2}$ and in Figures \ref{objective_space_max_k2} and \ref{objective_space_avg_k2}, the  blue images represent the $F$-values at the convergent point. 
Comparing Figures \ref{objective_space_max_k1} and \ref{objective_space_max_k2}, it is clear that changing the cone from $K_{1}$ to $K_{2}$ alters the behavior of Max-NTRM, leading to different iterative points (intermediate points are shown in green and the final point is shown in blue). A similar pattern is observed for Avg-NTRM when comparing Figures \ref{objective_space_avg_k1} and \ref{objective_space_avg_k2}. Therefore, the choice of cone plays a crucial role in the convergence behavior of NTRMs.

Next, as per the performance profiling in \cite{dolan2002benchmarking}, given a set of solvers $S$ (TRM, Max-NTRM, Avg-NTRM, SD, CG) and a set of problems $P$ ($22$ problems in this paper), the performance ratio $r_{p,s}$ of algorithm $s \in S$ in solving problem $p \in P$ is defined as $r_{p,s}: = \frac{t_{p,s}}{\min\{t_{p,s}:s \in S\}}$, 
where $t_{p,s}$ is the metric of interest (e.g., number of iterations). For problem $p$, the best performing algorithm $s$ has $r_{p,s}=1$ and for the rest $r_{p,s}>1$. Then, the performance profile $\rho_{s}$ for algorithm $s$ is given as the cumulative distribution function of $r_{p,s}$, i.e.,
\begin{align*}
  \rho_{s}(\tau) : = \frac{1}{\lvert P \rvert}\left\lvert \{p \in P: r_{p,s} \le \tau \}\right\rvert.  
\end{align*}
Note that $\rho_{s}(\tau)$ is the probability that $r_{p,s}$ of algorithm $s$ lies in $[1, \tau]$. The value $\rho_{s}(1)$ gives the probability that algorithm $s$ performs the best. For example, $\rho_{s}(1)=0.7$ means algorithm $s$ performs the best for 70 percent of the problems. However, for a complete judgment, we also look at $\tau > 1$ where better performing algorithms should have a higher $\rho_{s}(\tau)$. \\ 

To compute $t_{p,s}$, we first sample $100$ initial points for each problem $p\in P$ and then run all the algorithms in $\mathcal{S}$ starting from the same initial points, for a maximum of $it_{\max}$ ($=100$) iterations. We call the initial point for which an algorithm converges within $it_{\max}$ as \emph{convergent} for that algorithm, and otherwise, it is called \emph{non-convergent}. Based on this, we compute $t_{p,s}$ under the scenario of considering convergent points that are common to all algorithms. For each problem $p$ and algorithm $s$, $t_{p,s}$ corresponding to different metrics obtained from our experiments for the scenario are given in Table \ref{uipuy_ytf}.


\begin{figure}[htbp]
\centering
\subfigure[No. of non-convergence]{%
    \includegraphics[width=0.45\textwidth]{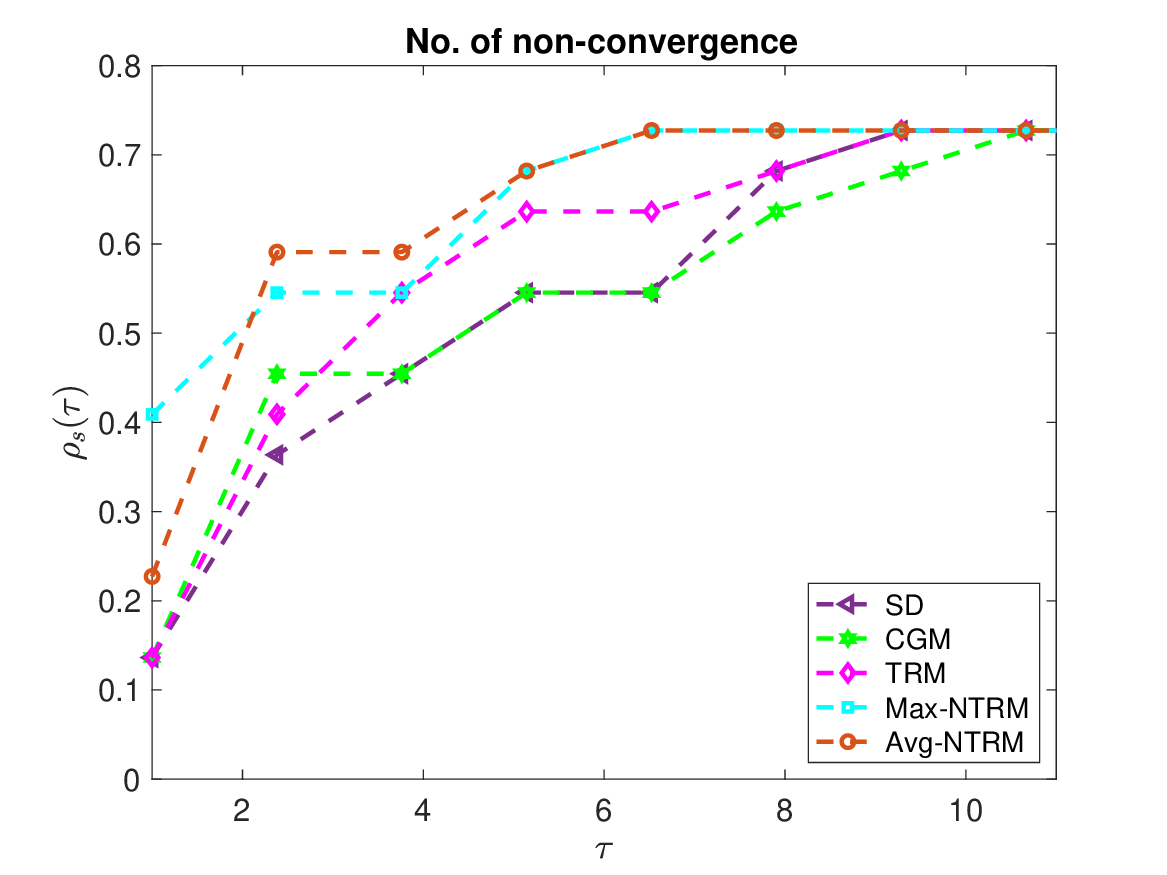}%
    \label{per_prof_no_of_failure}%
}
\hfill
\subfigure[No of iterations]{%
    \includegraphics[width=0.45\textwidth]{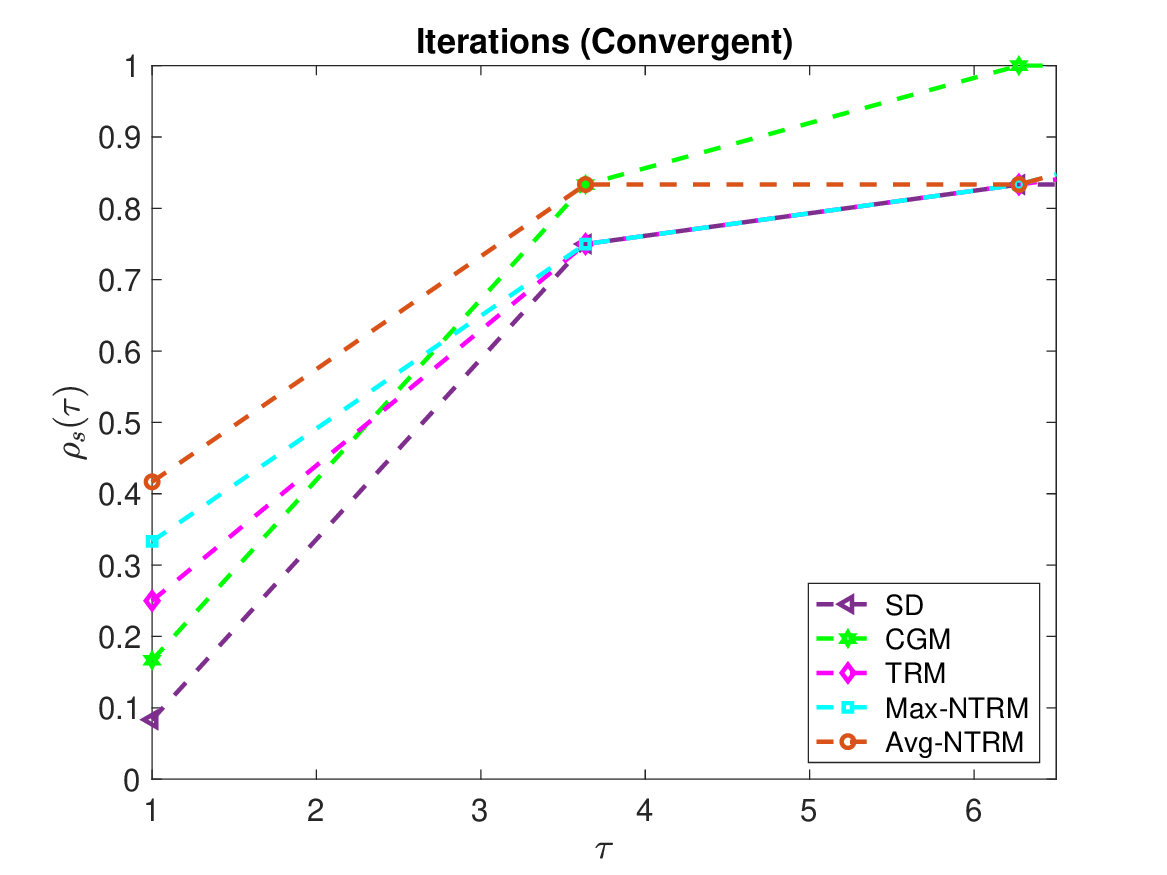}%
    \label{Iterations_Common_conv}%
}

\vspace{0.5cm} 

\subfigure[CPU time (sec)]{%
    \includegraphics[width=0.45\textwidth]{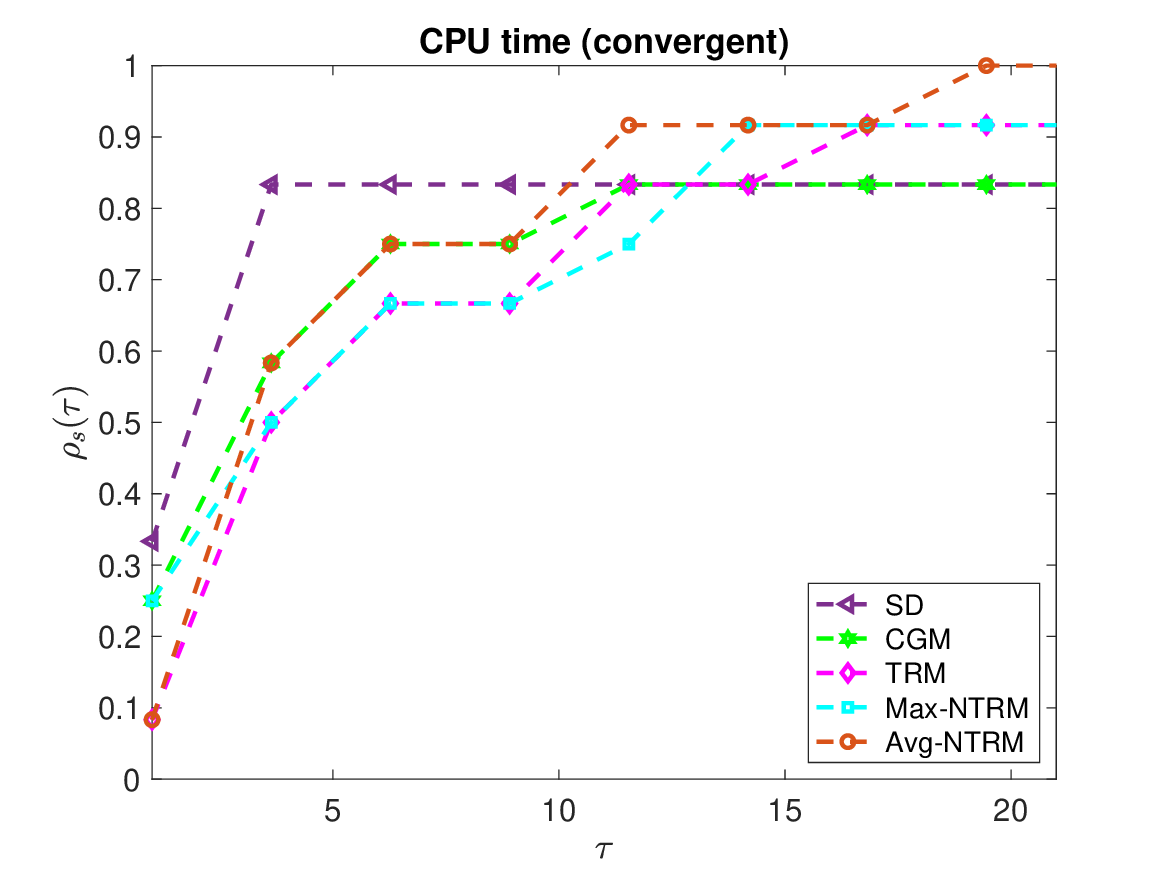}%
    \label{CPU_time_comm_conv_point}%
}
\hfill
\subfigure[Reciprocal of Step size]{%
    \includegraphics[width=0.45\textwidth]{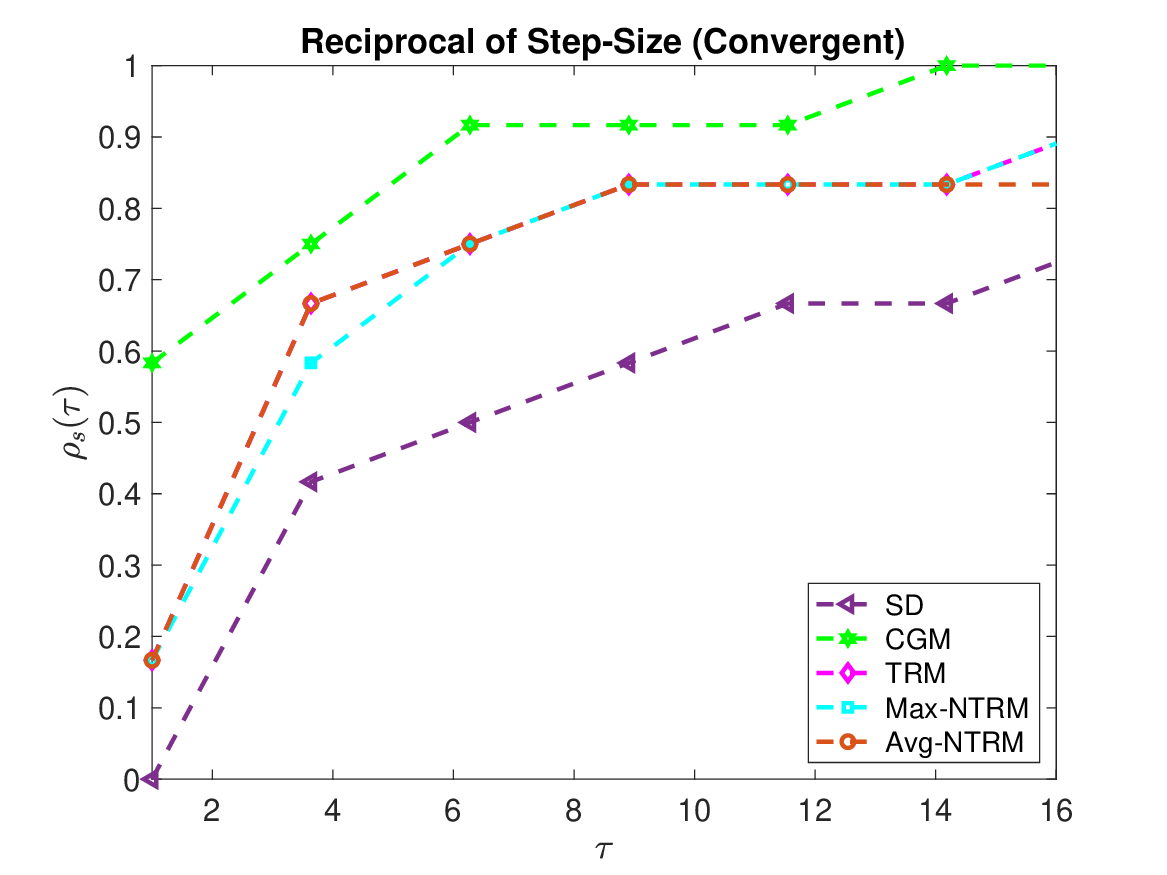}%
    \label{Step_sixe_avg}%
}
\caption{Performance profile of SD, CG, TRM, Max-NTRM and Avg-NTRM using four metrics: (a) no. of non-convergent points, (b) no. of iterations to converge, (c) CPU time taken to converge, and (d) reciprocal of average step-size taken every iteration. For these plots, we consider only the case of a common convergent point, i.e., the metrics are reported over only those initial points for which all three algorithms converged.}
\label{all_points}
\end{figure}

\subsection{Results}
First, Figure \ref{per_prof_no_of_failure} shows the performance profile of the five algorithms in terms of the probability of non-convergence (failure to converge) within $it_{\max}$ iterations. Within the three trust-region-based methods, Max-NTRM and Avg-NTRM outperform TRM. Specifically, Max-NTRM and Avg-NTRM show similar performance across most $\tau$ values, except in the intervals $[1, 2.3]$ and $[2.3, 5]$, where Max-NTRM and Avg-NTRM are, respectively, better. Compared with line-search methods, NTRMs consistently outperform SD and CG. 

Next, we analyze the speed of convergence of five algorithms using three metrics: number of iterations, CPU time, and reciprocal of average step-size. To ensure a fair comparison of speed of convergence, we exclude the effect of non-convergence and consider the scenario that takes into account the common convergent points of all algorithms.

In this scenario, for $10$ out of $22$ problems, neither CG nor SD converged for any initial point out of $100$. Therefore, this analysis is limited to the remaining $12$ problems. In Figure \ref{Iterations_Common_conv}, among the three TRMs, Avg-NTRM converges the fastest, outperforming both Max-NTRM and TRM. Between Max-NTRM and TRM, Max-NTRM is superior when $\tau$ is approximately in the range $[1, 3.63]$ (approx.); beyond this point, both methods require a similar number of iterations to converge. 
Upon comparing all algorithms, Avg-NTRM outperforms the others, especially the two line search methods, CG and SD, for $\tau$ value in $[1, 3.63]$ (approx.); beyond this point, CG performs better.

In terms of CPU time, as depicted in Figure \ref{CPU_time_comm_conv_point}, Avg-NTRM and Max-NTRM are more time-efficient. Notably, for $\tau$ in the range $[9, 11.5]$, TRM slightly outperforms Max-NTRM. Comparing Avg-NTRM and Max-NTRM, Max-NTRM is preferable for $\tau$ values up to approximately $3.63$, but beyond this, Avg-NTRM consistently outperforms Max-NTRM. When contrasting the three trust-region-based methods with the two line-search methods, SD requires the least amount of time for $\tau$ values approximately up to $9$. This efficiency is primarily because trust-region-based methods involve computationally intensive Hessian computations, e.g., in Step \ref{step3_choice_of_a_max} (selection of an $a^{k}$ from $P_{k}$) and Step \ref{yjuu_max} (step computation), whereas SD does not have such overhead.  For higher $\tau$ values, Avg-NTRM outperforms in time efficiency.

\begin{landscape}
\begin{center}
\begin{table}[] 
 \resizebox{1.5\textwidth}{!}{\begin{minipage}{\textwidth}
 \caption{Performance comparison of SD, CG, TRM, Max-NTRM, Avg-NTRM for $22$ SOPs based on (a) \textbf{No. of non-convergences}: 
 no of initial points for which an algorithm failed to converge within $100$ iterations, (b) \textbf{Iterations} (Convergent): average number of iterations taken by an algorithm, (c) \textbf{CPU Time} (Convergent): average time taken (in seconds) by an algorithm, (d) 
\textbf{Step Size} (Convergent): average step size taken by an algorithm, calculated over only common convergent points.
For each metric, the best-performing algorithm's value is marked in bold. Each problem, defined on its domain $S$, is identified by its name, dimension $n$, and $m$ in argument and image spaces, respectively. $\uparrow$ means higher values are better, and $\downarrow$ means lower values are better.$`-$' means that we did not get even a single initial point for which all five algorithms converged.}
\label{uipuy_ytf}
  \begin{tabular}{c c|c|c c c c c|c c c c c|c c c c c |c c c c c}
    \hline
   \multicolumn{3}{c|}{\textbf{Type of SOP}}   &
      \multicolumn{5}{c|}{\textbf{No. of non-convergences $\downarrow$}} &
      \multicolumn{5}{c|}{\textbf{CPU Time} (Convergent) $\downarrow$} &
      \multicolumn{5}{c|}{\textbf{Iterations} (Convergent) $\downarrow$} & \multicolumn{5}{c}{\textbf{Step Size} (Convergent) $\uparrow$}\\ 
      \hline
 Name& $n,m$ & Domain ($S$) & \textbf{SD} & \textbf{CG} & \textbf{TRM} & \textbf{Max} & \textbf{Avg}& \textbf{SD} & \textbf{CG} & \textbf{TRM} & \textbf{Max} & \textbf{Avg}&\textbf{SD} & \textbf{CG} & \textbf{TRM} &\textbf{Max} & \textbf{Avg}  &\textbf{SD} & \textbf{CG} & \textbf{TRM} & \textbf{Max} & \textbf{Avg} \\
  \hline  
  Problem \ref{test_inst_ZDT1_22}&$2,2$& $ [0,1]^{n}$&$73$ & $100$ & ${15}$ & $\bf{10}$  & $12$  & $-$ & $-$ & $-$ & $-$ & $-$ & $-$ & $-$ & $-$ & $-$ & $-$ &$-$ & $-$ & $-$ & $-$ & $-$ \\
  
 & $5,2 $ &  &$98$ & $100$ & ${2}$ & $\bf{0}$ & $7$&$-$& $-$ & $-$ & $-$ & $-$ &$-$ & $-$ & $-$ & $-$&$-$ & $-$ & $-$ & $-$ & $-$  &$-$\\

 &$8,2$ & & $100$ &$100$ & ${12}$ & $\bf{0}$ & $15$& $-$& $-$ & $-$ & $-$& $-$&$-$ & $-$ & $-$ & $-$ & $-$ &$-$ & $-$ & $-$ &$-$ & $-$\\
  
 &$10,2$ &  &$100$ & $100$ & ${17}$ &$\bf{0}$& $ 9$& $-$ & $-$ & $-$& $-$ & $-$ & $-$ & $-$ & $-$ & $-$& $-$ & $-$ & $-$ & $-$ & $-$ & $-$\\
 \hline
  
 Problem \ref{test_inst_ZDT4}  &$10,2$& $[0.01,1]\times$ &$100$ & $100$ & $\bf{0}$ &$\bf{0}$ & $\bf{0}$ & $-$ & $-$ & $-$ & $-$ & $-$ & $-$ & $-$ & $-$ & $-$ & $-$ & $-$ & $-$ & $-$ & $-$ & $-$\\
  && $ [-5,5]^{n-1}$ & & & & &  &  &  &  & &&&  &  &&  & \\
 
 \hline
 Problem 
\ref{test_inst_DTLZ1_n100} & $6, 4$ &$[0,1]^{n}$& $94$ & $100$ & ${33}$ & $\bf{14}$& $\bf{14}$&$-$& $-$& $-$& $-$ & $-$& $-$ & $-$ & $-$ & $-$& $-$ & $-$ & $-$ & $-$ & $-$ & $-$\\
  \hline
  Problem 
 \ref{test_inst_DTLZ3_54} & $5,4$ & $[0,1]^{n}$ &$100$ & $100$ & $\bf{55}$ &$62$ & $ $63$ $ & $-$ & $-$ & $-$ & $-$ &$-$& $-$ &$ -$ & $-$ &$-$ &$-$&$-$ & $-$ & $-$ & $-$ & $-$ \\
 \hline 
  Problem
\ref{test_inst_DTLZ5} & $3,3$ & $[0,1]^{n}$&$27$ & ${17}$ & $49$  & $\bf{16}$ & $32$& $\bf{4.51}$  & $7.45$ & $66.61$ & $63.45$ & $40.27$& ${5.76}$ & $7.24$ & ${ 5.41}$ &$6.14$& $\bf{3.79}$ &${0.09}$ & $0.119$ & $\bf{0.230}$ & $0.211$ & $        0.217$\\
  
 &$5, 3$ & & $56$ & $60$ & ${30}$ & $\bf{8}$& $\bf{8}$ & ${5.53}$ & $23.14$  & $176.9$ & $301.94$& $95.18$& ${5.45}$ & $17.33$ & $8$  &$15.55$ & $\bf{4.83}$ & $0.171$ & ${0.057}$ & $\bf{0.298}$ &${0.274}$& $0.293$\\
  
 &$7, 5$ & &$97$ & $100 $ & ${28}$ & $\bf{12}$& $15$& $-$ & $-$ & $-$ &$-$& $-$& $-$ & $-$ & $-$ &$-$&$-$& $-$ & $-$ & $-$& $-$& $-$\\
 \hline
 
Problem \ref{test_inst_Hil} & $2,2$ & $[0,5]^{n}$ & $\bf{0}$ &$35$& $40$& $33$& $33$& ${7.87}$ & $29.04$ & $\bf{5.217}$ & $9.35$ & $15.22$ & $\bf{19.84}$ & $88.125$ & $   60.85$ & $92.74$ & $159.20$ & $  0.07$& $\bf{1.05}$ & ${0.07}$& $ 0.07$ & $0.06$\\
 \hline

Problem \ref{test_inst_DGO1}  & $1,2$ & $[-10,13]$ &$\bf{20}$ & $\bf{20}$ & $93$ & $92$ & $91$& $3.02$ & $\bf{2.93}$ & $33.76$ & $33.10$ & $27.18$ & $2.17$ & $\bf{1.17}$ & $2.83$ & $3.83$ & $2.66$& $0.574$ & $\bf{1}$ & $0.323$ & $   0.323$ & $0.323$\\
 Problem
\ref{test_inst_DGO2} & $ 1,2$ & $[-9,9]$ &$\bf{1}$ &$\bf{1}$ &$2$ & $2$& $2$& $50.29$ & $\bf{1.59}$ & $6.45$ & $7.43$ & $7.13$ & $61.28$ & $\bf{6.13}$ & $44.90$ & $44.19$& $46.96$ & $0.09$ & $\bf{6.016}$ & $0.741$ & $  0.745$ & $0.773$\\
 \hline
 Problem
  \ref{test_inst_JOS1a} & $5, 2$ & $[-2,2]^{n}$ & $21$ & $\bf{7}$ & $58$ & $41$ & $41$ &$344.2$ & $    1097.57$ & $98.11$ & $\bf{8.57}$ & $ 8.81$ & $14.07$ & $2.29$ & $18.57$ & $\bf{1.21}$& $\bf{1.21}$ & $0.19$ & $  \bf{1.96}$ & $ 0.54$ & $0.66$ & $0.66$\\
 \hline
  Problem
 \ref{test_inst_FDSa} & $2,3$ & $[-2,2]^{n}$ &$\bf{21}$ & $46$& $97$ &$97$& $97$& $10.84$ & $\bf{8.06}$ & $32.71$ & $32.51$
& $48.13$ &  $4$ &  $3$
 & $\bf{2.5}$ &  $3.5$ &  $5$ & $0.069$ & $\bf{0.103}$ & $0.06$ & $0.06$ & $0.05$ \\
 \hline

Problem \ref{test_inst_Rosenbrock} & $4,3$& $[-2,2]^{n}$ & $100$ & $100$ & ${79}$ &$\bf{23}$ &$26$ & $-$ & $-$ & $-$ & $-$ & $-$ &$-$ & $-$ & $-$ & $-$ & $-$ & $-$ & $-$ & $-$& $-$ & $-$\\
 \hline

Problem \ref{test_inst_Brown_Dennis} 
    & $4,5$ & $[-25,25]$ & $39$ & $16$ & ${9}$ & ${1}$  & $\bf{0}$ & $141.90$ & ${130.31}$ & $267.93$ & $\bf{128.87}$ & $139.25$ &$26.83$ & ${9.17}$ & $10.67$ & $\bf{5.75}$ & $5.83$& $0.40$ & $2.05$ & $1.93$ &  $\bf{2.77}$ & $2.70$\\

  &  & $\times [-5,5]^{2}$ &  &  &  & & &  &  &  & & & &  &  &  &  &  \\

   &  & $\times [-1,1]$ &  &  &  & &&  & &  &  &  & & &  & &  &  & \\
 \hline

Problem \ref{test_inst_Trigonometric} & $4, 4$  & $[-1,1]^{n}$ &$13$ & $14$ & ${5}$ &$\bf{3}$& $\bf{3}$&$\bf{39.31}$ &  $8249.51$ & $84.79$ &  $87.74$& $
          100.74$ & $8.07$ & $4.85$ & $\bf{ 3.15}$ & $ 4.31$ & $ 4.69$& $ 0.218$ & $\bf{  0.679}$ & ${0.164}$ & $   0.164$ & $  0.164$  \\
 \hline

Problem \ref{test_inst_Das_Dennis}& $5 ,2$ & $[-20,20]^{n}$& $100$ &  $100$ & $\bf{75}$ & $76$ & $78$ & $-$ & $-$ & $-$ &$-$ &$-$&$-$ & $-$ & $-$ & $-$ & $-$ & $-$ & $-$ & $-$ & $-$ & $-$ \\
 \hline
Problem \ref{Example_5.1_stee} & $1,2$ &$[2,10]^{n}$ &$74$ & $74$ & $\bf{73}$&$73$& $73$ & $\bf{0.95}$ & $ 2.05$ &  $1.31$&  $13.15 $ & ${1.54 }$ & $2.81$ & $3.80$ & $3.80$&  $\bf{1.27}$ & ${1.34}$ & $0.110 $& $\bf{192608174.14}$ &  $0.212$  & $0.212$ & $0.212$\\
\hline
Problem \ref{Example_5.3_stee} & $2,2$ &$[-20,20]^{n}$ & $55$ & $56$ & ${52}$ &$\bf{48}$& $50$ &${33.35}$ & ${48.04}$ &  $36.72$ &$\bf{32.05}$& $
          34.79$ & $5.75$ & $ 6.25$ & $\bf{3} $ & $\bf{3}$ & $3$ &$0.023$ & $   0.126$ & $  0.550$ & $\bf{0.604}$ & $   \bf{0.604}$\\
\hline
Problem \ref{Sphere} & $3,3$ & $[0,1]^{3} $ &$23$ & ${14}$ & $66$& $40$  & $ $\bf{9}$ $ & $ 111.83$ & $614.00$ & $79.22$& $218.87$ & $\bf{60.29}$ & $ 4.7
$ & $8.75$ & $8.6$ & $14.3$ & $\bf{2.05}$& $0.126$ &  $0.046$ & $0.185$ & $0.167$ & $\bf{0.612}$ \\
 \thickhline
 \end{tabular} 
 \end{minipage}}
\end{table}
\end{center}
\end{landscape}

In Figure \ref{Step_sixe_avg}, we show the performance profile in terms of the average step-size taken by the five algorithms. As per Dolan and Mor{\'e} \cite{dolan2002benchmarking}, since the metric $t_{p,s}$ indicates better performance with lower values, we define the performance metric in terms of step-size as $t_{p,s} := \frac{1}{\text{step-size}}$. The step size is defined as the usual Euclidean $l_{2}$-norm of the difference between the consecutive iterates, calculated as $\lVert x_{k+1}-x_{k} \rVert$. To determine the average step size, we first compute an average across all iterations for each initial point, and then average the averages across all initial points.
The problem-wise values used to plot the performance profile in Figure \ref{Step_sixe_avg}, are given in the seventh column of Table \ref{uipuy_ytf}. From Figure \ref{Step_sixe_avg}, we see that Avg-NTRM, Max-NTRM and TRM take almost similar step except for $\tau$ in the range $[1, 6.27]$ (approx.), where Avg-NTRM and TRM are slightly better than Max-NTRM. Additionally, beyond the $\tau$ value of $14.16$, Max-NTRM and TRM takes similar steps, while Avg-NTRM takes smaller steps compared to them. Overall, among all algorithms, CG takes the largest step.

 Finally, if we analyze the performance of only the three TRM algorithms, then from Table \ref{uipuy_ytf}, we observe that in terms of the possibility of convergence, non-monotone schemes are far better than TRM for all problems except DTLZ3, Das and Dennis, and Modified Example 5.1 \cite{steepmethset}, where TRM is slightly better. On the other hand, if we look at the speed of convergence, then, based on iterations (Convergent), NTRM is better than TRM except for Hill and FDSa, where TRM is better. In terms of CPU time (Convergent), NTRM and TRM are both almost similar, where NTRM and TRM are better for $59$ percent and $41$ percent of test problems, respectively. Finally, in terms of the step size, from Table \ref{Step_sixe_avg}, NTRM is either similar to or slightly better than TRM, except for DTLZ with $(n,m) = (3,3)$ and $(5,3)$. Overall, we find that NTRM is better than TRM at finding a solution by taking fewer iterations and less CPU time.

Next, similar to the TRM paper \cite{trust2025setopt}, we observe that, in terms of convergence probability, any of the three TRM algorithms perform better for problems with a higher number of variables and function components, whereas SD and CG perform better for problems with fewer variables and function components. Specifically, for some problems like the ZDT1 type problem, SD and CG do not converge at all when the number of variables $n$ increases from $2$ to $10$. A similar observation can be made for DTLZ3 and DTLZ5, where TRM performs better with a higher probability of convergence.

Altogether, with the higher possibility of finding solution, NTRM is not only better than TRM but also better than SD and CG as well, shown in Figure \ref{per_prof_no_of_failure} and in the third column of Table \ref{uipuy_ytf}.   In terms of speed of convergence, using the iteration (convergent) metric, Avg-NTRM is consistently better for up to $\tau=3.63$; beyond that, CG performs better.  In terms of CPU time (convergent), SD performs better for certain values of $\tau$ because of the absence of Hessian computation.  
Therefore, the proposed NTRMs should be considered very strong alternatives for many types of problems.


Finally, we analyze the performance of the algorithms from the perspective of the error graph in Figure \ref{error_graph_analysis}. For this, we consider the problems Brown and Dennis, Modified Ex 5.3, DTLZ5 ($n=3, m=3$), and DTLZ5 ($n=5, m=3$) with their respective initial points $(-4.4685,-1.6190,-3.7467,0.5130)$, $(-10.3717, 14.4456)$, $(0.243525, 0.015403, 0.779052)$, and $(0.438645, 0.166460, 0.603468, 0.722923, 0.163512)$. As error, denoted by $\mathrm{error}_{k}$, we plot $|t_{k}|$ for TRM, Max-NTRM, and Avg-NTRM and $\lVert v_{k} \rVert$ for SD and CG versus the iteration number $k$.  
 
In Figure \ref{error_graph_Brown_dennis}, the TRM and NTRM variants show near-superlinear convergence, bending sharply downward near the optimum and reaching low error levels in few iterations. CG converges more slowly but still faster than SD, while SD exhibits nearly linear convergence and slows down significantly near the optimum.
In Figure \ref{error_graph_Modified_Ex5.3},  TRM variants converge the fastest, reaching low error levels in few iterations, with Average-NTRM performing best. CG shows near-linear convergence with a slower decline, while SD is the slowest, remaining at the highest error level. In Figures \ref{error_graph_DTLZ5_n3_m3} and \ref{error_graph_DTLZ5_n5_m3}, TRM and NTRM variants converge faster than linearly, showing sharp error reduction after a few steps. By contrast, SD and CG decrease error steadily but slowly, appearing nearly linear on the log scale. 



\begin{figure}[htbp]
\centering

\subfigure[Brown and Dennis]{
    \includegraphics[width=0.4\textwidth]{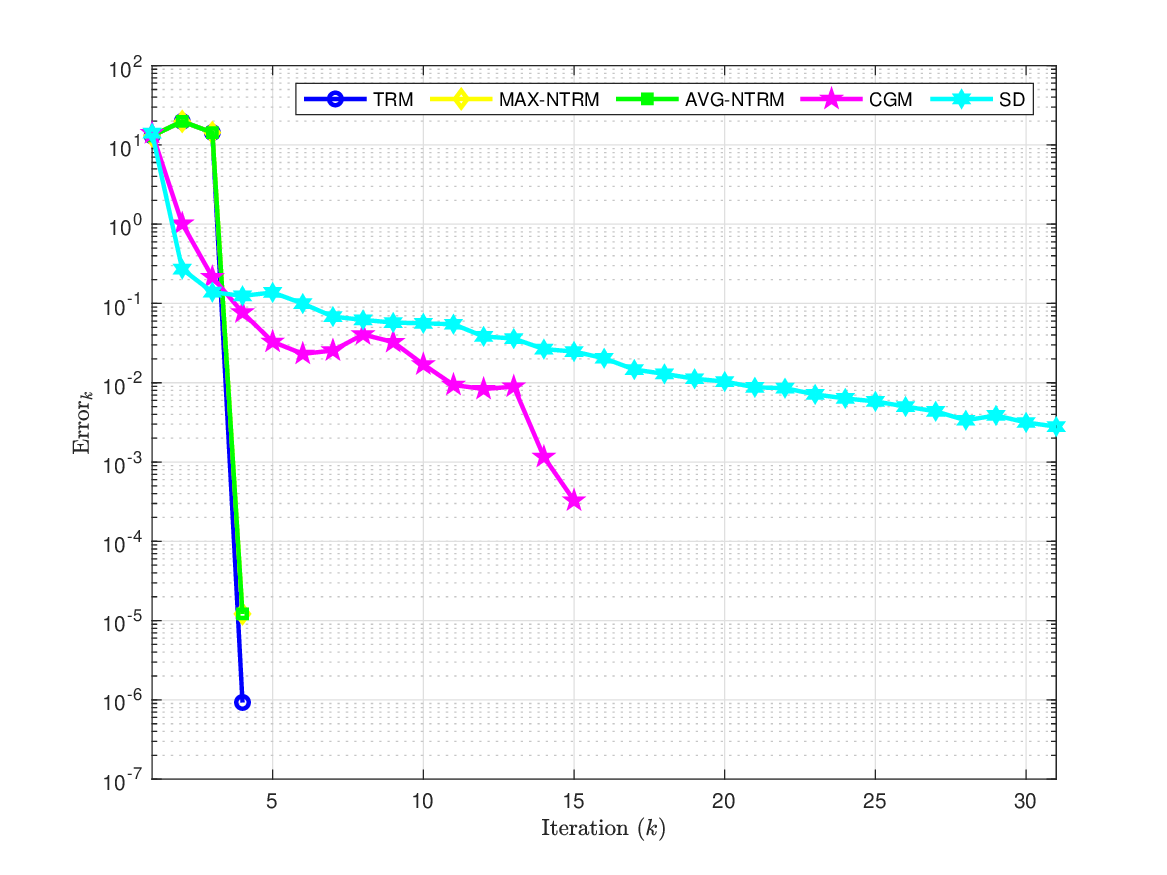}
    \label{error_graph_Brown_dennis}
}
\subfigure[Modified Ex5.3]{
    \includegraphics[width=0.4\textwidth]{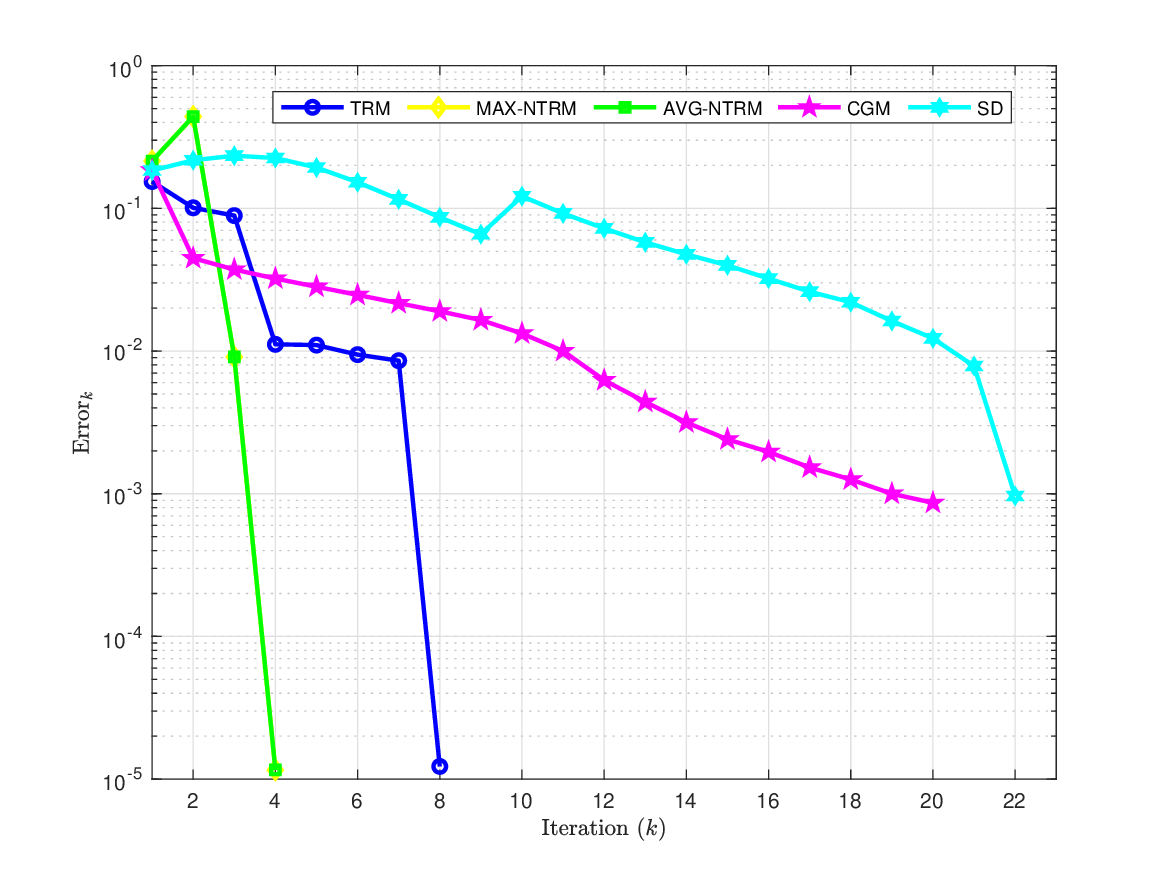}
    \label{error_graph_Modified_Ex5.3}
}
\subfigure[DTLZ5 ($n=3, m=3$)]{
    \includegraphics[width=0.4\textwidth]{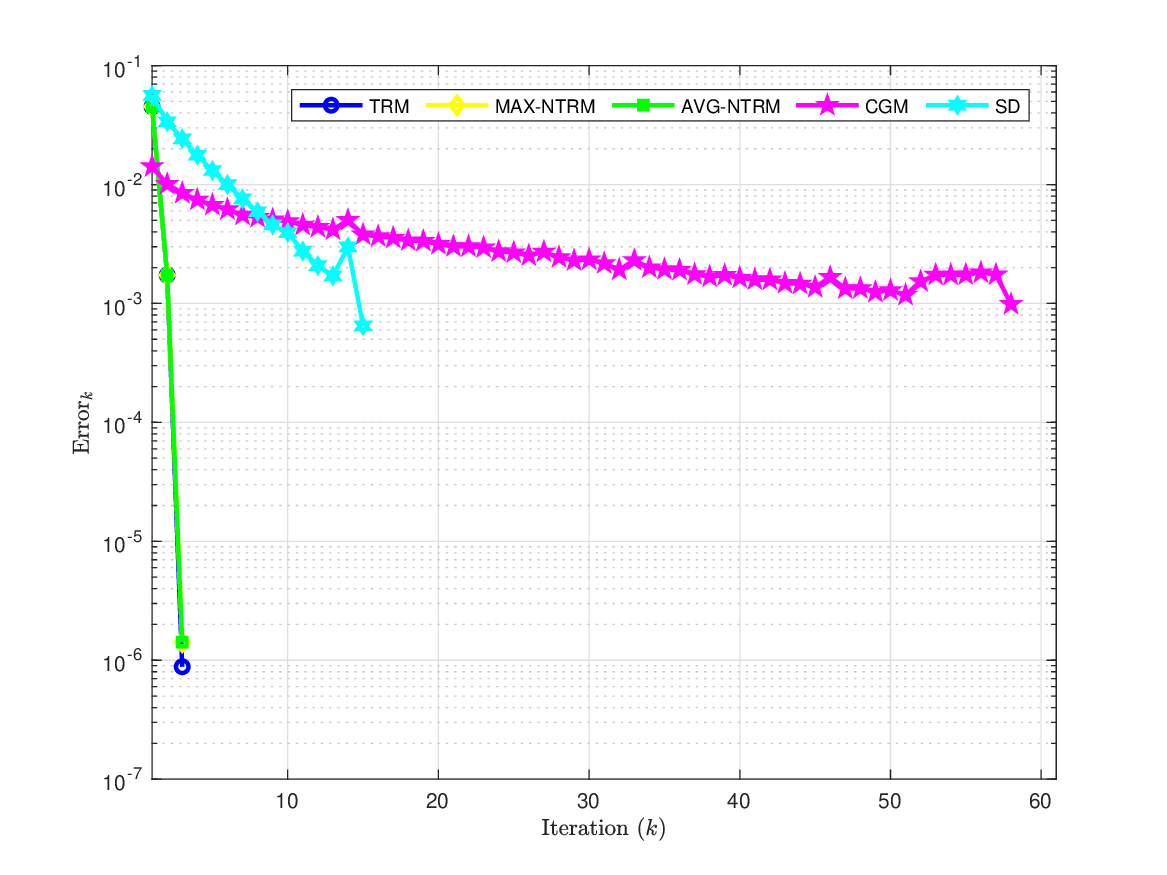}
\label{error_graph_DTLZ5_n3_m3}
}
\subfigure[DTLZ5 ($n=5, m=3$)]{
    \includegraphics[width=0.4\textwidth]{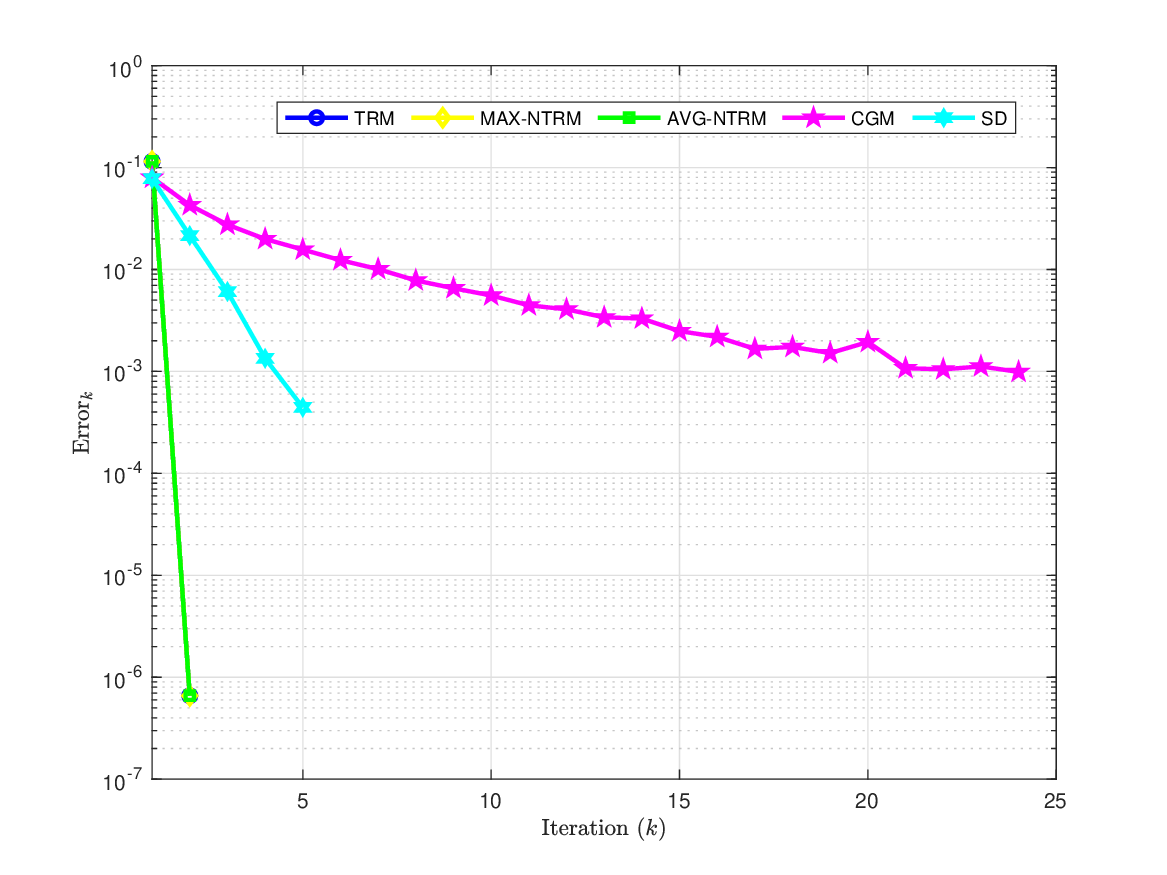}
    \label{error_graph_DTLZ5_n5_m3}
}

\caption{Error graphs of TRM, Max-NTRM, Avg-NTRM, CG, and SD for Brown and Dennis, Modified Ex5.3, DTLZ5 ($n=3, m=3$), and DTLZ5 ($n=5, m=3$).}
\label{error_graph_analysis}
\end{figure}

\section{Conclusion and Future Directions} \label{sect6}
\noindent 
In this article, we have studied two non-monotone trust-region methods, called Max-NTRM and Avg-NTRM, to find a $K$-critical point for \eqref{fgcx}. For this, we have modified TRM for SOP proposed by Ghosh et al. \cite{trust2025setopt} by utilising the non-monotone strategies of Ramiraz et al. \cite{Ramirez} and Ghalavand et al. \cite{ghalavand2023two} from multi-objective optimization literature. The main modification is in the step acceptance requirement by relaxing the strict monotonic decrement requirement of the monotone TRM of Ghosh et al. \cite{trust2025setopt}. Instead of considering the function values of only the current iteration, we have taken into account the maximum over successive function values from the last few iterations for Max-NTRM (Definition \ref{ghvt_ugh_huh}) and an weighted moving average of successive function values up to current iteration for Avg-NTRM (Definition \ref{tbddv_uiet}), which are used to define two new reduction ratios (Definition \ref{Ratios_NTR_Max} and Definition \ref{NTR_Average_Ratios}) for Algorithm \ref{alg_max} and Algorithm \ref{alg_avg}, respectively. The well-definedness of these algorithms has been discussed in Subsection \ref{welldefine_non_monotone_trust_algori}.
Under some assumptions, the convergence analysis (Section \ref{global_conv_ana}) of the two proposed methods has also been discussed, where we have derived the following. 
\begin{enumerate}[(i)]
\item At an $x_{k}$ iterate, the solution $s_{k}$ of the subproblem (\ref{ghtyu}) satisfies certain relations that lower bound the sufficient decrease of the model $m^{a^{k}}_{k}$ (Corollary \ref{utrw_kkjx}).

\item The sequence of maximum over previous successive function values $\{(f^{i,r}(x_{l^{i,r}(k)}))_{r \in [m]}\}_{i \in [p]}$ and weighted moving average of previous successive function values with current function values $\{(C^{i,r}_{k})_{r \in [m]})\}_{i\in [p]}$ decreases monotonically at every iteration and eventually admits a limit as $k \to \infty$ (Lemma \ref{admits_limit_max_avg}).

\item A step is eventually accepted after a finite number of unsuccessful iterations along with a reduction in trust-region radius (Theorem \ref{rtyue_ouit}). 

\item The global convergence of regular iterative sequence $\{x_{k}\}$ generated by Algorithm \ref{alg_max} and Algorithm \ref{alg_avg} converges to a $K$-critical point for \eqref{fgcx} under bounded level set assumption (Theorem \ref{gjhg_uyhugy}).
\end{enumerate}
Finally, in the numerical experiments performed for some well-known and newly defined $22$ test problems, we have found that in terms of probability of convergence, NTRM performs the best; in terms of number of iterations, NTRM performs the best for most of the problems; and in terms of CPU time, SD performs the best.

\bigskip
In future work, several directions can be pursued:
\begin{itemize}
\item Use of the Cauchy point as the step length, as the Cauchy step does not require matrix factorization. This can further simplify the method computationally.

\item For max-type NTRM in \cite{ghalavand2023two}, we have not adjusted $r^{i}_{k}$ from Equation (10) in \cite{ghalavand2023two} for the SOP context, as most results involving $r^{i}_{k}$ appear consistent with our findings in this paper. Nonetheless, we plan to verify this rigorously in future studies. In contrast, for the average-type NTRM, we have improved $c^{i}_{k}$ using formula (\ref{tbddv_uiet}).

\item The convergence analysis in this paper relies on the regularity condition, which makes $\preceq^{l}_{K}$-minimal and weakly $\preceq^{l}_{K}$-minimal elements coincide and keeps the cardinality function locally constant. While useful for theory, this assumption is restrictive, and future research could relax it by allowing variable cardinality or by studying non-regular points where partition sets may change.

\item The objective function in problem \eqref{fgcx} is treated as a set-valued map with finite cardinality. Extending the proposed method to handle set-valued maps with continuum cardinality is another promising direction.
\end{itemize}
\begin{appendices}

\section{Test problems for set-valued optimization}\label{appendix_ind}
Below, we provide a list of test problems for set-valued optimization. These problems are formulated from the test problems of multi-objective optimization problems (see \cite{steepmethset,NewtonMethod, huband2006review, mita2019nonmonotone}). For each of the following test problems, the map $F:\mathbb R^{n} \to \mathbb R^{m}$ is given by $F(x): = \{f^{1}(x), f^{2}(x), \ldots, f^{100}(x)\}$. 
For the DTLZ-type problems, $x_{m}$ is the set of the last $x_{i}$ variables of the vector $x$ in $\mathbb R^{n}$, where $i$ varies from $1$ to $\ell=n-m+1$. The cardinality of the set  $x_{m}$, denoted as $|x_{m}|$, is $\ell$.

\begin{prob}\label{test_inst_ZDT1_22}
ZDT1-type SOP. For each $i \in [100]$, $f^{i}: \mathbb R^{n} \to \mathbb R^{2}$ is given by
\begin{align*}
f^i(x):=\begin{pmatrix}
   f_{1}(x) + \left(0.02+ 0.02 \cos^{16}(\frac{4 \pi i}{100})\right) \cos(\frac{2 \pi i}{100})    \\
    g(x) h(f_{1}(x),g(x)) +  0.15+ 0.15\cos^{16}\left(\frac{4 \pi i}{100} \right) \sin(\frac{2 \pi i}{100})  
\end{pmatrix}
\end{align*}
with $ 
f_{1}(x) := x_{1}, g(x) := 1 + 9 \sum_{i=2}^{n} x_{i} \text{ and } h(f_{1}, g) := 1 - \sqrt{\frac{f_{1}}{g}}.$
\end{prob}

\begin{prob}\label{test_inst_ZDT4}
ZDT4-type SOP. Here $m = 2$ and for each $i \in [100]$, $f^{i}: \mathbb R^{n} \to \mathbb R^{2}$ is given by
\begin{align*}
 f^{i}(x):= 
\begin{pmatrix}
 f_{1}(x)+(1 +  \cos^{16}(\frac{4\pi i}{100}) \cos(\frac{2\pi i}{100})) \\
 g(x)h(f_{1}(x),g(x)) + (1 + \cos^{16}(\frac{4\pi i}{100})\sin(\frac{2\pi i}{100}))
\end{pmatrix}
\end{align*}
with 
$f_{1}(x) := x_{1}, g(x) := 1+ 10(n-1)+ \sum_{i=2}^{n} (x^{2}_{i}-10\cos(4\pi x_{i})) \text{ and } h(f_{1},g) := 1 - \sqrt{ \frac{f_{1}}{g}}.$  
 \end{prob} 

\begin{prob}\label{test_inst_DTLZ1_n100}
DTLZ1-type SOP. For each $i \in [100]$, $f^{i}: \mathbb R^{n} \to \mathbb R^{m}$ is given by
\begin{align*}
f^{i}(x) := ~&\begin{pmatrix}
    (1+g(x_{m}))x_{1} x_{2}\cdots x_{m-1}\\
    (1+g(x_{m})) x_{1} x_{2}\cdots (1-x_{m-1})\\
    \vdots \\
   \frac{1}{2} (1+g(x_{m}))\frac{1}{2} x_{1} (1-x_{2})\\
    \frac{1}{2}(1-x_{1}) (1+g(x_{m})).
\end{pmatrix}+ \begin{pmatrix}
 \cos(\phi_{i})\sin(\psi_{i})\\
 \sin(\phi_{i})\sin(\psi_{i})\\
 (\cos(\psi_{i})+\ln \left( \tan\frac{\psi_{i}}{2}\right)+0.2 \phi_{i}\\
 0 \\
\vphantom{\int^0}\smash[t]{\vdots} \\
0 
\end{pmatrix} 
\end{align*}
 \mbox{with} 
 \begin{align*}
g(x_{m}) := 100 \left(\lvert x_{m} \rvert + \sum_{x_{i}\in x_{m}} (x_{i}-0.5)^{2}-\cos(20 \pi (x_{i}-0.5))\right).
\end{align*}
 Here,  the set $\{ (\phi_{i}, \psi_{i}) : i\in [100] \}$ is an enumeration of the set $\{\tfrac{\pi}{5}(j - 1): j \in [10]\} \times \{\tfrac{\pi}{5}(\ell - 1): \ell \in [10]\}$. 
 \end{prob}

\begin{prob}\label{test_inst_DTLZ3_54}
 DTLZ3-type SOP. For each $i\in [100]$, $f^{i}: \mathbb R^{n} \to \mathbb R^{m}$ is given by
\begin{align*}
f^{i}(x) := ~& \begin{pmatrix}
(1+g(x_{m}))\cos(\frac{x_{1}\pi}{2})\cdots \cos(\frac{x_{m-2}\pi}{2}) \cos(\frac{x_{m-1}\pi}{2})\\
(1+g(x_{m}))\cos(\frac{x_{1}\pi}{2})\cdots \cos(\frac{x_{m-2}\pi}{2}) \sin(\frac{x_{m-1}\pi}{2})\\
 (1+g(x_{m})) \cos(\frac{x_{1}\pi}{2}) \cos(\frac{x_{1}\pi}{2}) \\
 \vdots \\
 (1+ g(x_{m})) \sin(\frac{x_{1}\pi}{2}).
\end{pmatrix} + \begin{pmatrix}
 \sech(\phi_{i}) \cos(\phi_{2})\\
 \sech(\phi_{i}) \sin(\psi_{i})\\
 \phi_{i}-\tanh(\phi_{i})\\
 0 \\ \vphantom{\int^0}\smash[t]{\vdots} \\
0
\end{pmatrix} 
\end{align*}
\mbox{with}~ 
\begin{align*}
&g(x_{m}) := 100 \left(\lvert x_{m} \rvert + \sum_{x_{i} \in x_{m}} (x_{i}-0.5)^{2}-\cos(20\pi(x_{i}-0.5))\right).
\end{align*}
Here, the set $\{(\phi_{i}, \psi_{i}): i \in [100] \}$ is an  enumeration of the set $\{\frac{\pi}{5}(j-1): j \in [10]\} \times \{\frac{\pi}{5}(l-1): l \in [10]\}$.
 \end{prob}

\begin{prob}\label{test_inst_FDSa}
 FDSa-type SOP. For each $i \in [100]$, $f^{i}: \mathbb R^{n} \to \mathbb R^{m}$  is given by
\begin{align*}
&f^i(x):=\begin{pmatrix}
  G_{1}(x) + (1+ \cos(\phi_{i})\cos(\psi_{i}))\\
   G_{2}(x) + (1 + \cos(\phi_{i})\sin(\psi_{i}))    \\ 
    G_{3}(x) + sin(\phi_{i}) \end{pmatrix}
\mbox{with} ~G_{1}(x) := \frac{1}{n^2} \sum_{i = 1}^{n} i(x_{i}-i)^4, \\
 &  G_{2}(x) := \exp\left(\sum_{i=1}^{n}\frac{x_{i}}{n}\right) + {\lVert x \rVert}^{2}_{2}, ~\text{and}
  ~~ G_{3}(x) := \frac{1}{n(n+1)}  \sum_{i=1}^{n} i(n-i+1) \exp(-x_{i}).
\end{align*}
\end{prob}

 \begin{prob}\label{test_inst_DTLZ5}
 DTLZ5-type SOP. For each $i \in [100]$, $f^{i}: \mathbb R^{n} \to \mathbb R^{m}$ is given by
 \begin{align*}
f^{i}(x) := \begin{pmatrix}
(1+g(x_{m}))\cos(\tfrac{\theta_{1}\pi}{2})\cdots\cos(\tfrac{\theta_{m-2}\pi}{2})\cos(\tfrac{\theta_{m-1}\pi}{2})\\
(1+g(x_m))\cos(\tfrac{\theta_{1}\pi}{2})\cdots\cos(\tfrac{\theta_{m-2}\pi}{2})\sin(\tfrac{\theta_{m-1}\pi}{2})\\
(1+g(x_{m})) \cos(\tfrac{\theta_{1}\pi}{2})\cdots \sin(\tfrac{\theta_{m-2}\pi}{2})\\
\vdots \\
(1+ g(x_{m}))\sin(\tfrac{\theta_{1}\pi}{2})
\end{pmatrix}+ \begin{pmatrix}
5\tfrac{\psi_{i}}{2\pi}\\
\lambda \tfrac{\cos(\phi_{i})}{10} \\
\lambda \tfrac{\sin(\phi_{i})}{10} \\
0 \\
\vphantom{\int^0}\smash[t]{\vdots}\\
0
\end{pmatrix},
\end{align*}
where
\begin{align*}
\theta_{i}(x):= &\frac{1}{2(1+g(x_{m}))} (1+g(x_{m})x_{i}) ~\text{for}~i = 2, 3,\ldots,(m-1),~ \\\text{and}~ g(x_{m}) := &\sum_{x_{i} \in x_{m}} (x_{i}-0.5)^2. 
\end{align*}
Here, the set $\{ (\phi_{i}, \psi_{i}) : i\in [100] \} $ is an enumeration of the set $\{\tfrac{\pi}{5}(j - 1): j \in [10]\} \times \{\tfrac{\pi}{5}(l - 1): l \in [10]\}$. 
\end{prob}

\begin{prob}\label{test_inst_DGO1} 
 DGO1-type SOP. For each $i \in [100], f^{i}: \mathbb R \to \mathbb R^{2}$ is given by 
\begin{align*}
   f^{i}(x) := ~&g(x) + \begin{pmatrix} 
    \sin(\tfrac{\pi i}{50} + \cos(\tfrac{\pi i}{50}))\\
    \cos(\tfrac{\pi i
    }{50} + \sin(\tfrac {\pi i}{50}))
   \end{pmatrix}\mbox{ with }~ 
g(x) := \begin{pmatrix}
           \sin(x) \\
           \sin(x+0.7)
     \end{pmatrix}.
\end{align*}
 \end{prob}

\begin{prob}\label{test_inst_DGO2}
 DGO2-type SOP. For each $i \in [100], f^{i}: \mathbb R \to \mathbb R^{2}$ is given by
\begin{align*}
   f^{i}(x) :=  g(x) + \begin{pmatrix}
       \sin(\tfrac{\pi i}{50} + \cos(\tfrac{\pi i}{50}))\\
        \cos(\tfrac{\pi i}{50} + \sin(\tfrac{2\pi i}{50}))
   \end{pmatrix} 
\mbox{with}~ 
g(x) := \begin{pmatrix}
      x^{2}\\
      9 -\sqrt{81-x^2}
    \end{pmatrix}.
\end{align*}
 \end{prob}

\begin{prob}\label{test_inst_Hil}
 Hil-type SOP. For each $i \in [100]$, $f^{i}: \mathbb R^{n} \to \mathbb R^{2}$ is given by
\begin{align*}
    f^{i}(x) := g(x) + \begin{pmatrix}
     10 ((9+ \exp(\sin(\tfrac{\pi i}{25}))- \sin(\tfrac{\pi i}{25})+ 2(\cos(\tfrac{2\pi i}{25}))^2)/128) \cos( \tfrac{\pi i}{50})\\ 10 ((9+ \exp(\sin(\tfrac{ \pi i}{25}))-\sin(\tfrac{\pi i}{25})+ 2(\cos(\tfrac{2\pi i}{25}))^2)/128) \sin(\tfrac{\pi i}{50})
    \end{pmatrix}
\end{align*}
with 
\begin{align*}
 g(x) :=
   \begin{pmatrix}
      \cos((\tfrac{\pi}{180})(45+40\sin(2\pi x_{1})+ 25 \sin(2\pi x_{2}))(1+0.5 \cos(2\pi x_{1}) ) \\ 
      \sin((\tfrac{\pi}{180})(45+40\sin(2\pi x_{1})+ 25 \sin(2\pi x_{2}))(1+0.5 \cos(2\pi x_{1}))
    \end{pmatrix}.
\end{align*}
\end{prob}

\begin{prob}\label{test_inst_JOS1a}
 JOS1a-type SOP. For each $i \in [100], f^{i}: \mathbb R^{n} \to \mathbb R^{2}$ is given by 
 \begin{align*}
    f^{i}(x) := g(x) + \begin{pmatrix}
         0.1 \cos(\tfrac{\pi i}{50})\\
         50 \sin(\tfrac{\pi i}{50})
    \end{pmatrix} 
\mbox{with} ~
 g(x) := \begin{pmatrix}
         \frac{1}{n} \sum_{i=1}^{n} x^{2}_{i} \\
         \frac{1}{n} \sum_{i=1}^{n} (x_{i}-2)^2
    \end{pmatrix}.
\end{align*}
\end{prob}

\begin{prob}\label{test_inst_Rosenbrock}
 Rosenbrock-type SOP. For each $i \in [100], f^{i}: \mathbb R^{n} \to \mathbb R^{3}$ is given by 
\begin{align*}
    f^{i}(x) :=  \begin{pmatrix}
         100 (x_{2}-x^{2}_{1})^{2}+ (x_{2}-1)^{2}+ (r^{2}(\cos({\phi_{i}}) \cos(\psi_{i}) \sin(\psi_{i}))) \\
         100 (x_{3}-x^{2}_{2})^{2} + (x_{3}-1)^{2} + (r^{2}(\cos({\phi_{i}}) \sin(\psi_{i}) \sin(\psi_{i})))\\
         100 (x_{4}-x^{2}_{3})^{2} + (x_{4}-1)^{2}+ (r^{2}(\cos({\phi_{i}}) \sin(\psi_{i}) \cos^{2}(\psi_{i})))
    \end{pmatrix}.
\end{align*}
 We set $r = 16$ and the set $\{(\phi_{i}, \psi_{i}): i \in [100]\}$ is an enumeration of the set $\{ \frac{\pi}{5}(j-1):j\in [10]\}\times \{ \frac{\pi}{5}(l-1):l\in [10]\}$.
 \end{prob}

\begin{prob}\label{test_inst_Brown_Dennis}
Brown and Dennis-type SOP. For each $i \in [100]$, $f^{i}: \mathbb R^{4} \to \mathbb R^{3}$ is given by 
\begin{align*}
& f^{i}(x) := 
\begin{pmatrix}
(x_{1}+ \frac{1}{5}x_{2}-\exp({\frac{1}{5}}))^{2}+ (x_{3}+x_{4}\sin(\frac{1}{5}) -\cos(\frac{1}{5}))^{2} + \cos(\phi_{i}) \sin(\psi_{i})\\
(x_{1} + \frac{2}{5}x_{2}-\exp({\frac{2}{5}}))^{2} + (x_{3}+ x_{4}\sin(\frac{2}{5}) - \cos(\frac{2}{5}))^{2} + \sin(\phi_{i}) \sin(\psi_{i})\\
(x_{1} + \frac{3}{5}x_{3} - \exp(\frac{3}{5}))^{2} +  (x_{3} + x_{4} \sin(\frac{3}{5}) - \cos(\frac{3}{5}))^{2} +  (\cos(\psi_{i}) +   \log(\tan(\frac{\psi_{i}}{2}))) + 0.5\phi_{i} \\ 
\end{pmatrix} 
\end{align*}
The set $\{(\phi_{i}, \psi_{i}): i \in [100]\}$ is an enumeration of the set $\{ \frac{2\pi}{5}(j-1):j\in [10]\}\times \{ 0.01+ 0.098(l-1):l\in [10]\}$.
 \end{prob}

\begin{prob}\label{test_inst_Trigonometric}
Trigonometric type SOP. For each $i \in [100], f^{i}: \mathbb R^{n} \to \mathbb R^{4}$ is given by
\begin{align*}
& f^{i}(x) := \begin{pmatrix}
 (1 - \cos x_{1} + (1-\cos x_{1}) -\sin x_{1})^{2} + \cos(\phi_{i}) \sin(\psi_{i})  \\
 (2 - \cos (x_{1}+x_{2}) + 2 (1- \cos x_{2})-\sin x_{2})^{2} + \sin(\phi_{i})\sin(\psi_{i}) \\
 (3 - \cos(x_{1} + x_{2} + x_{3}) + 3(1 - \cos x_{3}) - \sin x_{3})^2 + (\cos(\psi) + \log(\tan(\frac{\psi_{i}}{2}))) + 0.2 \phi_{i} \\
 (4 -\cos(x_{1}+x_{2}+x_{3}+x_{4})+4(1-\cos x_{4})-\sin x_{4}) \\ 
 \end{pmatrix}. 
\end{align*}
 The set $\{(\phi_{i}, \psi_{i}): i \in [100]\}$ is an enumeration of the set $\{ \frac{2\pi}{5}(j-1):j\in [10]\}\times \{ 0.01+ 0.098(l-1):l\in [10]\}$.
\end{prob}

\begin{prob}\label{test_inst_Das_Dennis}
 Das and Dennis type SOP. For each $i \in [100], f^{i}: \mathbb R^{n} \to \mathbb R^{2}$ is given by
\begin{align*}
& f^{i}(x) := \begin{pmatrix}
(x^{2}_{1}+x^{2}_{2}+x^{2}_{3}+x^{2}_{4} + x^{2}_{5} + (\sin(\frac{i\pi}{50})+ \cos(\frac{i\pi}{50})) \\
(3 x_{1} + 2 x_{2} - \frac{x_{3}}{3} + 0.01(x_{4} - x_{5})^{3} + (\sin(\frac{i\pi}{50})+\cos(\frac{i\pi}{50}))
\end{pmatrix}.
\end{align*}
 \end{prob}

\begin{prob}\label{Example_5.1_stee} Test Instance 5.1 in \cite{steepmethset}-type SOP. For each $i \in [5], f: \mathbb R \to \mathbb R^{2}$ is defined as 
\begin{align*}
    f^{i}(x) := 
    \begin{pmatrix}
      x \\
     \frac{x}{2} \sin(x)
    \end{pmatrix}
+  \cos^{2}(x) \bigg[\frac{(i-1)}{4}
\begin{pmatrix}
     1\\
     -1
\end{pmatrix}+ \bigg(1-\frac{i-1}{4} \bigg)\bigg].
    \end{align*}
\end{prob}

\begin{prob}\label{Example_5.3_stee}
 Test Instance 5.3 in \cite{steepmethset}-type SOP.  
 For each $i \in [100]$, $f^{i}: \mathbb R^{n} \to \mathbb R^{2}$ is given by 
\begin{align*}
f^{i}(x_1, x_2) : = 
\begin{pmatrix}
e^{\frac{x_1}{2}}\cos x_{2} + x_{1} \cos x_{2} \sin \tfrac{\pi(i-1)}{50} - x_{2} \sin x_{2} \cos^3 \tfrac{\pi(i-1)}{50} \\
e^{\frac{x_{2}}{20}} \sin x_{1} + x_{1} \sin x_{2} \sin^{3} \tfrac{\pi(i-1)}{50} + x_{2} \cos x_{2} \cos \tfrac{\pi(i-1)}{50}
\end{pmatrix}.
\end{align*}
\end{prob}

\begin{prob}\label{Sphere}
Sphere SOP. For each $i \in [100]$, $f^{i}: \mathbb R^{3} \to \mathbb R^{3}$ is given by 
\begin{align*}
   f^{i}(x_1, x_2, x_3) :=\begin{pmatrix}
    (1 + g(x_3)) \cos u(x_1) \cos v(x_1, x_2, x_3) \\ 
    (1 + g(x_3)) \cos u(x_1) \sin v(x_1, x_2, x_3) \\ 
    (1 + g(x_3)) \sin u(x_1) 
    \end{pmatrix} + \tfrac{1}{16}
   \begin{pmatrix}
   \cos \phi_i \\ 
   \cos \psi_i \sin \phi_i \\ 
   \sin \psi_i \sin \phi_i 
   \end{pmatrix}
\end{align*}
with $ g(x_3) := (x_3 - \tfrac{1}{2})^2,~ u(x_1) := \frac{\pi x_1}{2},~  
    v(x_1, x_2, x_3) := \tfrac{\pi (1 + 2 g(x_3) x_2)}{4 \left(1 + g\left(\sqrt{x_1^2 + x_2^2 + x_3^2}\right)\right)} $
 and the set $\{(\phi_{i}, \psi_{i}): i \in [100]\}$ is an enumeration of the set $\{\tfrac{\pi}{10}(j - 1): j \in [10]\} \times \{\tfrac{\pi}{5}(l - 1): l \in [10]\}$. 
\end{prob}
\end{appendices}

\subsubsection*{Funding}
Debdas Ghosh is thankful for the financial support from the Core Research Grant (CRG/2022/001347) from SERB, India. Zai-Yun Peng is supported by the National Natural Science Foundation of China (12271067), the Chongqing Natural Science Foundation (CSTB2024NSCQ-MSX0973), and the Science and Technology Research Key Program of Chongqing Municipal Education Commission (KJZD-K202200704).

\subsubsection*{Data availability}
There is no data associated with this paper.

\subsection*{Declarations}    

\subsubsection*{Funding and/or Conflicts of interests/Competing interests} 
The authors do not have any conflicts of interest to declare. They also do not have any funding conflicts to declare.

\subsubsection*{Ethics approval} 
This article does not involve any human and/or animal studies.


\end{document}